\newtheorem{theorem}{Theorem}[section]
\newtheorem{lemma}{Lemma}[section]
\newtheorem{corollary}{Corollary}[section]
\newtheorem{proposition}{Proposition}[section]
\newtheorem{conjecture}{Conjecture}[section]
\newtheorem{definition}{Definition}[section]
\newtheorem{example}{Example}[section]
\newtheorem{remark}{Remark}[section]
\numberwithin{equation}{section}
\def\Z{\mathbb Z}
\def\R{\mathbb R}
\def\N{\mathbb N}
\def\Om{\Omega}
\def\om{\omega}
\def\d{\partial}
\def\s{\sigma}
\def\e{\epsilon}
\def\a{\alpha}
\def\b{\beta}
\def\g{\gamma}
\def\l{\lambda}
\title{Recovering contact forms from boundary data}
 \author[G.~Katz]{Gabriel Katz}
\address{MIT, Department of Mathematics, 77 Massachusetts Ave., Cambridge, MA 02139, U.S.A.}
\email{gabkatz@gmail.com}
\begin{document}

\maketitle 

\begin{abstract} Let $X$ be a compact smooth manifold with boundary. The paper deals with contact $1$-forms $\beta$ on $X$, whose Reeb vector fields $v_\beta$ admit Lyapunov functions $f$. 

We tackle the question: how to recover $X$ and $\beta$ from the appropriate data along the boundary $\partial X$?  We describe such boundary data and prove that they allow for a reconstruction of the pair $(X, \beta)$, up to a diffeomorphism of $X$. We use the term ``holography" for the reconstruction. We say that objects or structures inside $X$ are {\it holographic}, if they can be reconstructed from their $v_\beta$-flow induced ``shadows" on the boundary $\partial X$.  

We also introduce numerical invariants that measure how ``wrinkled" the boundary $\partial X$ is with respect to the $v_\beta$-flow and study their holographic properties under the contact forms preserving embeddings of equidimensional contact manifolds with boundary.  We get some ``non-squeezing results" about such contact embedding, which are reminiscent of Gromov's non-squeezing theorem in symplectic geometry.
\end{abstract}

%\section{}
%\subsection{}

\section{Introduction}

In this paper, we study contact $1$-forms $\b$ %and contact structures $\xi_\b$ they generate 
on compact connected smooth $(2n+1)$-manifolds $X$ with boundary. We focus on residual structures along the boundary $\d X$ that allow to reconstruct $X$ and the contact form $\b$ on $X$, up to a smooth diffeomorphism. When possible, such a reconstruction deserves the name of ``{\sf holography}".  

%We understand better the reconstruction of contact forms $\b$ than the reconstruction of associated contact structures $\xi_\b$ from the appropriate boundary data. Thus, our results mainly focus on contact forms. 
In fact, modulo the Holography Theorem from \cite{K4}, our techniques are quite pedestrian, but the questions we tackle here seem to be novel. Some ideas of this paper can be traced back to \cite{K2} and \cite{K4}. \smallskip

Let us describe one special case in which our holography results admit considerable simplifications. Consider  a compact connected smooth manifold $X$ of dimension $2n+1$, equipped with a contact $1$-form $\b$. Recall that a form $\b$ is {\sf contact}, if the top form $\b \wedge (d\b)^n$ is a volume form on $X$.  The contact form $\b$ generates a unique nonsingular vector field $v_\b$ which is called a {\sf Reeb field}. It is characterized by two properties: $\b(v_\b) = 1$ and $d\b(v_\b \wedge \sim) \equiv 0$. In this paper we deal mostly with the Reeb fields that admit Lyapunov functions $f: X \to \R$, i.e., $df(v_\b) > 0$.

The Reeb field $v_\b$ (as well as any non-vanishing in the vicinity of $\d X$ vector field) divides the boundary $\d X$ into two loci:  $\d_1^+X$ and $\d_1^-X$.
Along  $\d_1^+X$, $v_\b$ points in the interior of $X$ or is tangent to $\d X$; along  $\d_1^-X$, $v_\b$ points  outside of $X$ or is tangent to $\d X$. For a generic $v_\b$, the loci $\d_1^+X$, $\d_1^-X$ are compact manifolds that share a common boundary $\d_2X$. Temporarily, for the sake of simplicity, let us assume that, in the vicinity of $\d_2X$, the boundary $\d X$ is {\sf convex} with respect to the $v_\b$-flow. This  crudely means that the behavior of $v_\b$ in the transversal to $\d_2X$ plane resembles the behavior of the vector field $\d_y$ relative to the domain $\{x \geq y^2\} \subset \R^2$. 

The Lyapunov function $f$ forces any $v_\b$-trajectory that originates at a point $x \in  \d_1^+X$ to reach the locus $\d_1^-X$ at some point $y$. This correspondence $x \leadsto y$ produces a map $C_{v_\b}:  \d_1^+X \to \d_1^-X$. We call $C_{v_\b}$ the {\sf causality map}. In the special case of $\d X$ being $v_\b$-convex, $C_{v_\b}$ is continuous. 

The Holography Theorem from \cite{K4} claims crudely that the knowledge of the map $C_{v_\b}$ is sufficient for a reconstruction of $X$ and the un-parametrized dynamics of the $v_\b$-flow, up to a diffeomorphism of $X$ that is the identity on $\d X$. In this paper, we refine this claim. Imagine that, in addition,  we are given the restriction $\b^\d$ of the contact form $\b$ to the boundary $\d X$ and the restriction $f^\d$ to $\d X$ of a Lyapunov function $f: X \to\R$ such that $df(v_\b) =1$. Then our main result, Theorem \ref{th.main},  claims that the triple $(X, \b, f)$ can be reconstructed from the triple $(C_{v_\b}, \b^\d, f^\d)$, up to a diffeomorphism that is that is the identity on $\d X$.%\smallskip  

In this paper, we are also preoccupied with problems that are not explicitly related to holography. Let us describe   some of them. 

Recall that a vector field $u$ is called is called a {\sf contact field} for a given contact $1$-form $\b$ on $X$, if $\mathcal L_u \b = \l \cdot \b$ for some smooth function $\l: X \to \R$. The Reeb vector field $v_\b$ is contact since $\mathcal L_{v_\b} \b = 0$. We strive to understand which contact vector fields generate a flow that preserves the manifold $X$, in particular its boundary. Proposition \ref{prop.X_is_FI(u)-invariant} provides an answer in terms of an arbitrary smooth function $h: X \to \R$ such that $0$ is its regular value and $h^{-1}(0) = \d X$.

We also are tackling the following natural question: How to discriminate between Reeb vector fields and general nonsingular vector fields on a given odd-dimensional manifold $X$ with boundary? We are far away from answering this question in its full generality. 
However, in Proposition \ref{cor.Morse-Reeb}, we show that,  for any gradient-like vector field $v$ of a Morse function $f$ on any compact manifold $Y$ of dimension $\geq 7$, the restriction of $v$ to the compliment $X$ of a convex regular neighborhood of the $f$-singular set is {\it not} a Reeb field for any choice of a contact form $\b$ on $X$. 

We suspect that the following notion of a {\sf contact shell} may be relevant to answering the general question about the properties that distinguish between Reeb and generic vector fields. 
 Let $v$ be a smooth non-vanishing vector field on a $(2n +1)$-dimensional compact manifold $X$, and let $\b$ be a $1$-form such that $\b(v)=1$ and $v\, \rfloor \, d\b= 0$. Note that here the property $\b \wedge (d\b)^n > 0$ may be violated somewhere in $X$. Let $S \subset X$ be a connected  oriented \emph{closed} smooth manifold of a dimension $2k$, $k \leq n$, transversal to the $v$-flow. 
  A  {\sf contact $(2k+1)$-shell} $Sh(S, v, \b)$ is a smooth compact submanifold of $X$, diffeomorphic to the product $S \times [0, 1]$ and such that each oriented segment $s \times [0, 1]$, $s \in S$ is mapped diffeomorphically to an oriented segment of a $v$-trajectory contained in $Sh(S, v, \b)$. Most importantly, we require that the restriction of $\b \wedge (d\b)^k$ to the shell $Sh(S, v, \b)$ be a positive volume form. 

In Proposition \ref{prop.NO_shells} we prove that, if $v_\b$ is the Reeb vector field of a contact form $\b$ on a compact $(2n+1)$-dimensional manifold $X$, then $X$ does not contain any contact $(2k+1)$-shells $Sh(S, v_\b, \b)$ for all $k \in [1, n]$.  In particular, the sets $\d_1^\pm X(v_\b)$ have no \emph{closed} connected components. We conjecture that the converse is true: if,  for a $1$-form $\b$ and a vector field $v$ such that $\b(v)=1$ and $v\, \rfloor \, d\b= 0$, the contact shells are absent,  then $\b$ is a contact form.

In the spirit of the Santal\'{o} Formula in Integral Geometry \cite{S}, in Theorem \ref{th.isoperimetric}, we prove the following inequality for Reeb fields $v_\b$ that admit a Lyapunov function:
$$
\int_X \b \wedge (d\b)^n \leq \mathsf{diam}_\mathcal R(\b) \cdot \int_{\d_2X(v_\b)} \b \wedge (d\b)^{n-1}. 
$$
Here $\mathsf{diam}_\mathcal R(\b)$, the {\sf Reeb diameter}, equals $\sup_{\g} \big\{\big |\int_\g \b \big |\big\}$, where the $\sup$ is taken over all the Reeb trajectories $\g$.

Next, let us describe the ``contact non-squeezing" Theorem \ref{th.non-squeezing}. It imposes constraints on contact embeddings $\Psi$ of compact contact $(2n+1)$-manifolds $X$ with boundary into a given contact  $(2n+1)$-manifold $Y$  (see Fig. \ref{fig.3_contact_2}).  Here a ``contact embedding" means that $\b_X= \Psi^\ast(\b_Y)$, where $\b_X$, $\b_Y$ are given contact forms on $X$ and $Y$. We assume that the  Reeb vector field $v_{\b_Y}$ admits a Lyapunov function. 

This kinds of problems have a glorious history:  non-squeezing theorem of Gromov's \cite{Gr2} in Symplectic Geometry, non-squeezing theorem of Eliashberg,  Kim,  and Polterovich \cite{EKP} in Contact Geometry, and its extension by Chiu \cite{Chi} and more recent work by Fraser, Sandon, and Zhang \cite{FSZ} to mention just a few. 

 Theorem \ref{th.non-squeezing} describes in a new  phenomenon:  the $(2n-1)$-volume of the equatorial locus $\d_2Y(v_{\b_Y})$ in the target restricts the $(2n-1)$-volume of the equatorial locus $\d_2X(v_{\b_X})$ in the source $X$, provided that a priory we restrict from above half of the number of times any $v_{\b_Y}$-trajectory can hit the hypersurface  $\Psi(\d X)$. We denote by $c^\bullet(\Psi, v_{\b_Y})$ this upper boundary. The restriction  imposed on the contact embedding  $\Psi$ is described by the formula  
 $$\int_{\d_2X(v_{\b_X})} \b_X \wedge (d\b_X)^{n-1} \leq c^\bullet(\Psi, v_{\b_Y}) \cdot \int_{\d_2Y(v_{\b_Y})} \b_Y \wedge (d\b_Y)^{n-1}.$$
 Example \ref{ex.ellipsoid} contains a translation of what our restrictions mean for the contact embeddings of odd-dimensional ellipsoids in a given target space $(Y, \b_Y)$.\smallskip
 
Then we introduce new numerical invariants of the Reeb flows $v_\b$ on manifolds with boundary. They are based on signed volumes of the $v_\b$-generated Morse strata (see (\ref{eq.2.1})) of the boundary, which measure how ``wrinkled" the boundary is with respect to the Reeb flow. We study the  holographic properties of these invariants under contact embeddings $\Psi$ of equidimensional manifolds (Proposition \ref{prop.t-independent_kappas}, Theorem \ref{th.detecting_volumes}).
\smallskip

Finally, using similar ``holographic" approach, we striving to reconstruct Legendrian submanifolds $L$ of $(X, \b)$ from their shadows $L^\dagger$ of on the screen $\d_1^+X(v_\b)$ (see Fig. \ref{fig.3_contact_3}). % and on the trajectory space $\mathcal T(v_\b)$.
As before, the Reeb vector field $v_\b$ is traversing. Proposition  \ref{prop.holography_for_Legendrians} describes the  structures on the screen $\d_1^+X(v_\b)$, sufficient for reconstructing $L$ from $L^\dagger$.  %These structures include the $(-v_\b)$-flow generated Lagrangian ``shadow" $L^\dagger \subset \d X$ of $L$. 
In Corollary \ref{cor.sub_Legendrian_links}, for sub-Legendrian links,  we give a boundary-localized criterion when the $v_\b$-flow through $L$ interacts with a $v_\b$-concave portion of the boundary $\d X$.
\smallskip

%
%\item {\sf On homotopy invariants of Legendrian embeddings of closed manifolds.}
%
%In this section we present a homotopy-theoretical construction (a version of the Thom map \cite{Thom}) that associate some classical algebraic topology  invariants of $X$ with isotopies of the Legendrian embeddings $L \subset X$. 
%\smallskip
%
%\item {\sf Questions.}
%
%In this section, we state a few natural questions in hope that experts may know the answers.  
%\end{enumerate}

We tried  to make this text self-contained. 

\section{Traversing, boundary generic, \& traversally generic vector fields}

Let $X$ be a compact connected smooth $(n+1)$-dimensional manifold with boundary. 

\begin{definition}\label{def.2.1_traversing}
A non-vanishing vector field $v$ on $X$ is called {\sf traversing} if each $v$-trajectory is homeomorphic to ether a closed interval, or to a singleton. \smallskip

We denote by $\mathcal V_{\mathsf{trav}}(X)$ the space of all traversing vector fields on $X$. 
\hfill $\diamondsuit$
\end{definition}

The $1$-dimensional singular oriented foliations $\mathcal F(v)$ on manifolds $X$ with boundary, generated by such traversing vector fields $v$, were originally studied in \cite{EG}. The singularities of such foliations are confined to the boundary $\d X$.  In fact, by adding a collar to $X$ to form a larger open manifold $\hat X$ and extending the vector field $v$ to the collar appropriately, we may think of the singular foliation $\mathcal F(v)$ as the restriction to $X$ of a nonsingular foliation $\mathcal F(\hat v)$, defined on $\hat X$. The book \cite{K3} may serve as a general reference source for the properties of such foliations $\mathcal F(v)$.  \smallskip

It follows that any traversing vector field is nonsingular and of the {\sf gradient type} \cite{K1}, i.e., there exists a smooth Lyapunov function $f:X \to \R$ such that $df(v) > 0$ in $X$. Moreover, the converse is true: any non-vanishing gradient-type vector field $v$ is traversing. %We stress that these claims apply only to the nonsingular vector fields. 
\smallskip

$\diamondsuit$ For the rest of the paper, we assume that the field $v$ on $X$ extends to a  vector field $\hat v$ on some open manifold $\hat X$ which properly contains $X$. We treat the extension $(\hat X, \hat v)$ as a {\sf germ} $Op(X, v)$ that contains $(X, v)$. %One may think of $\hat X$ as being obtained from $X$ by attaching an external collar to $X$ along $\d_1X$. 
In fact, the treatment of $(X, v)$ will not depend on the germ of extension $(\hat X, \hat v)$, but many constructions are simplified by introducing an extension.
\smallskip

Any smooth vector field $v$ on $X$, which does not vanish along the boundary $\d X$, gives rise to a partition $\d_1^+X(v) \cup \d_1^-X(v)$ of the boundary $\d_1 X =: \d X$ into  two sets: the locus $\d_1^+X(v)$, where the field is directed {\sf inward} of $X$ or is tangent to $\d X$, and  the locus $\d_1^-X(v)$, where it is directed {\sf outward} of $X$ or is tangent to $\d X$. 

We assume that $v|_{\d X}$, viewed as a section of the quotient  line bundle $T(X)/T(\d X)$ over $\d X$, is transversal to its zero section. This assumption implies that both sets, $\d^+_1 X(v)$ and $\d^-_1 X(v)$, are compact manifolds which share a common smooth boundary $\d_2X(v) =: \d(\d_1^+X(v)) = \d(\d_1^-X(v))$. Evidently, $\d_2X(v)$ is the locus where $v$ is \emph{tangent} to the boundary $\d X$.

In his groundbreaking work \cite{M}, M. Morse made an important observation: for a generic vector field $v$, the tangent locus $\d_2X(v)$ inherits a similar structure in connection to $\d_1^+X(v)$, as $\d_1 X$ has in connection to $X$. That is, $v$ gives rise to a partition $\d_2^+X(v) \cup \d_2^-X(v)$ of  $\d_2X(v)$ into  two sets: the locus $\d_2^+X(v)$, where the vector field $v$ is directed inward of $\d_1^+X(v)$ or is tangent to $\d_2X(v)$, and  $\d_2^-X(v)$, where it is directed outward of $\d_1^+X(v)$ or is tangent to $\d_2X(v)$. Again, we assume that the restriction $v|_{\d_2X(v)}$, viewed as a section of the quotient  line bundle $T(\d X)/T(\d_2X(v))$ over $\d_2X(v)$, is transversal to its zero section.

For generic fields, this structure replicates itself: the {\sf cuspidal locus} $\d_3X(v)$ is defined as the locus where $v$ is tangent to $\d_2X(v)$; $\d_3X(v)$ is divided into two manifolds, $\d_3^+X(v)$ and $\d_3^-X(v)$. In  $\d_3^+X(v)$, the field is directed inward of $\d_2^+X(v)$ or is tangent to its boundary, in $\d_3^-X(v)$, outward of $\d_2^+X(v)$ or is tangent to its boundary. We can repeat this construction until we reach the zero-dimensional stratum $\d_{n+1}X(v) = \d_{n+1}^+X(v) \cup  \d_{n+1}^-X(v)$.  

%To achieve some uniformity in our notations,  put $\d_0^+X =: X$ and $\d_1X =: \d X$. 

Thus, a  generic vector field $v$  on $X$  gives rise to two  stratifications: 
\begin{eqnarray}\label{eq.2.1}
\d X =: \d_1X \supset \d_2X(v) \supset \dots \supset \d_{n +1}X(v), \nonumber \\ 
X =: \d_0^+ X \supset \d_1^+X(v) \supset \d_2^+X(v) \supset \dots \supset \d_{n +1}^+X(v), 
\end{eqnarray}
the first one by closed submanifolds, the second one---by compact ones.  Here $\dim(\d_jX(v)) = \dim(\d_j^+X(v)) = n +1 - j$. \smallskip

$\diamondsuit$ We will often use the abbreviated notation ``$\d_j^\pm X$" instead of ``$\d_j^\pm X(v)$" when the vector field $v$ is fixed or its choice is obvious. \smallskip

These considerations motivate a more formal definition (\cite{K1}, \cite{K3}).

\begin{definition}\label{def.2.2_boundary-generic} 
Let $X$ be a compact smooth $(n+1)$-dimensional manifold with boundary $\d X \neq \emptyset$, and $v$ a smooth vector field on $X$.  

We say that  $v$  is {\sf boundary generic} if the restriction $v|_{\d X}$ does not vanish and produces a filtrations of $X$ as in (\ref{eq.2.1}). Its strata $\{\d_j^+X \subset \d_jX\}_{1 \leq j \leq n+1}$  are defined inductively in $j$ as follows:

\begin{itemize}
\item $\d_0X =: \d X$, $\d_1X =: \d X$ (hence $\d_0X$ and $\d_1X$---the base of induction---do not depend on $v$).
%\item $v$, viewed as a section of the tangent bundle $T(X)$, is transversal to its zero section,
\item  for each $k \in [1, j]$, the $v$-generated stratum $\d_kX$ is a closed smooth submanifold of  $\d_{k-1}X$,
\item  the field  $v$, viewed as section of the quotient 1-bundle  $$T_k^\nu =: T(\d_{k-1}X)/ T(\d_kX) \longrightarrow \d_kX,$$ is transversal to the zero section of $T_k^\nu \to \d_kX$ for all $k \leq j$. 
\item the stratum $\d_{j+1}X$ is the zero set of the section $v \in T_j^\nu$. 
\item the stratum $\d^+_{j+1}X \subset \d_{j+1}X$ is the locus where $v$ points inside of $\d_j^+X$.
\end{itemize}

We denote the space of boundary generic vector fields on $X$ by the symbol $\mathcal B^\dagger(X)$. \hfill $\diamondsuit$
\end{definition}

The trajectories $\g$ of a boundary generic vector field $v$ on $X$ interact with the boundary $\d_1 X$ so that each point $a \in \g \cap \d_1 X$ acquires a {\sf multiplicity} $m(a) \in \N$, the order of tangency of $\g$ to $\d_1 X$ at $a$. The multiplicity $m(a)$ may be defined as follows. As before, we embed $X$ into an open equi-dimensional $\hat X$ and extend  $v$ to a nonsingular vector field $\hat v$ on $\hat X$. Then we consider a smooth function $z: \hat X \to \R$ such that; {\sf(1)} $0$ is a regular value of $z$, {\sf(2)}  $f^{-1}(0) =\d X$, and {\sf(3)} $f^{-1}((-\infty, 0]) = X$. With these items in place, the multiplicity $m(a)$ is the integer $ j\geq 1$ such that the $k$-iterated $\hat v$-directional derivatives $\{\mathcal L_{\hat v}^k\}_k$ satisfy the equations 
\begin{eqnarray}\label{eq.MULTIPLICITY}
(\mathcal L_{\hat v}z)(a) =0,\,  (\mathcal L_{\hat v}^2z)(a) =0,\, \ldots ,\, (\mathcal L_{\hat v}^{j-1}z)(a) = 0, \text{ but }(\mathcal L_{\hat v}^{j}z)(a) \neq 0.
\end{eqnarray}

We associate a {\sf divisor} $D_\g = \sum_{a\, \in\, \g \cap \d_1 X} m(a)\cdot a$ with each $v$-trajectory $\g$. In fact, for any boundary generic $v$, the multiplicity $m(a) \leq \dim(X)$ and the support of $D_\g$ is finite (\cite{K2}, Lemma 3.1).  
\smallskip

Therefore, we may associate also a finite {\sf ordered} sequence $\om(\g) = (\om_1, \om_2, \dots, \om_q)$ of {\sf multiplicities} with each $v$-trajectory $\g$. The multiplicity $\om_i$ is the order of tangency between the curve $\g$ and the hypersurface $\d_1 X$ at the $i^{th}$ point of the finite set $\g \cap \d_1 X$. The linear order in $\g \cap \d_1 X$ is determined by $v$.   

For each trajectory $\g$ of a boundary generic and traversing $v$, we introduce two important quantities:
\begin{eqnarray}\label{eq.2.4}
m(\g) =: \sum_{a \in \g \cap \d_1X }\; m(a), \; \text{ and } \; m'(\g) =: \sum_{a \in \g \cap \d_1X }\; (m(a) -1),
\end{eqnarray} 
the {\sf multiplicity} and the {\sf reduced multiplicity} of the trajectory $\g$.
 
Similarly, for a sequence $\omega = (\omega_1, \omega_2, \, \dots \, ,  \omega_q)$, we introduce its {\sf norm} and its {\sf reduced norm} by the formulas: 

\begin{eqnarray}\label{eq.2.5}
|\omega| =: \sum_i\; \omega_i \quad \text{and} \quad |\omega|'  =: \sum_i\; (\omega_i -1).
\end{eqnarray}
Note that $q$, the cardinality of the {\sf support} of $\omega$, is equal to $|\omega| - |\omega|'$.
\smallskip

We consider also an important subclass of traversing and boundary generic fields $v$, which we call {\sf traversally  generic} (see Definition \ref{def.2.5_traversally_generic_fields}  below and Definition 3.2 from \cite{K2}). Such vector fields admit $v$-flow-adjusted semi-local coordinate systems, in which the boundary is given by a quite special polynomial equations, % (see formula (\ref{eq.2.10_VERSAL})), 
and all the trajectories are parallel to the preferred coordinate axis (see \cite{K2}, Lemma 3.4). More importantly, for such vector fields, the {\sf smooth topological type} of the flow in the vicinity of each trajectory of a combinatorial type $\om$ is determined by $\om$ (see \cite{K1})! \smallskip

Let us define the notion of a traversally  generic vector field. Given a boundary generic and traversing vector field $v$, for each trajectory $\g$, consider the finite set $\g \cap \d_1X = \{a_i\}_i$ and the collection of tangent spaces $\{T_{a_i}(\d_{j_i}X^\circ)\}_i$ to the pure strata $\{\d_{j_i}X^\circ\}_i$. Note that each point $a_i$ belongs to a unique stratum with the {\it maximal possible} index $j_i =: j(a_i)$. In fact, each space $T_{a_i}(\d_{j_i}X^\circ)$ is transversal to the curve $\g$ \cite{K2}.  
 \smallskip 

Let $S$ be a local transversal section of the $\hat v$-flow, an extension of $v$, at a point $a_\star \in \g$, and let $\mathsf T_\star$ be the space tangent to the section $S$ at $a_\star$. Each space $T_{a_i}(\d_jX^\circ)$, with the help of the $\hat v$-flow, determines a vector subspace $\mathsf T_i = \mathsf T_i(\g)$ in  $\mathsf T_\star$. It is the image of  the tangent space $T_{a_i}(\d_jX^\circ)$ under the composition of two maps: 

(1) the differential of the $v$-flow-generated diffeomorphism that maps $a_i$ to $a_\star$, and

(2) the linear  projection $T_{a_{\star}}(X) \to \mathsf T_\star$ whose kernel is generated by $v(a_\star)$. 
%For a traversing $v$ and a majority  of trajectories, we can choose the space $T_{a_\star}(\d_1^+X)$ for the role of $\mathsf T_\star$, where $a_\star$ is the lowest point of $\g \cap \d_1X$. 
\smallskip

The configuration $\{\mathsf T_i\}$ of {\sf affine} subspaces  $\mathsf T_i \subset \mathsf T_\star$ is called {\sf generic} (or {\sf stable}) when all the multiple intersections of spaces from the configuration have the least possible dimensions, consistent with the dimensions of $\{\mathsf T_i\}$. In other words, $$\textup{codim}(\bigcap_{s} \mathsf T_{i_s},  \mathsf T_\star) = \sum_s \textup{codim}(\mathsf T_{i_s},  \mathsf T_\star)$$ for any subcollection $\{\mathsf T_{i_s}\}$ of spaces from the list $\{\mathsf T_i\}$.

Consider the case when $\{\mathsf T_i\}$ are vector subspaces of $\mathsf T_\star$.  If we interpret each subspace $\mathsf T_i$ as the kernel of a linear epimorphism $\Phi_i:  \mathsf T_\star \to \R^{n_i}$, then the property of  $\{\mathsf T_i\}$ being generic can be reformulated as the property of  the direct product map $\prod_i \Phi_i:  \mathsf T_\star \to \prod_i  \R^{n_i}$ being an epimorphism.  In particular, for a generic configuration of affine subspaces, if a point  belongs to several  $\mathsf T_i$'s, then the sum of their codimensions $n_i$ does not exceed the dimension of the ambient space $\mathsf T_\star$. 
\smallskip

The definition below resembles and is inspired by the ``{\sf Condition NC}" imposed on, so called, {\sf Boardman maps} (see \cite{Bo}) between smooth manifolds (see \cite{GG}, page 157, for the relevant definitions). In fact, for a generic traversing vector fields $v$, the $\hat v$-flow delivers germs of Boardman maps  $p(v, \g): \d_1X \to \R^n$, available in the vicinity of each trajectory $\g$ \cite{K2}. Here $\R^n$ is identified with a transversal section $S_\star$ of the $\hat v$-flow in the vicinity of a point $\star \in \g$.

\begin{definition}\label{def.2.5_traversally_generic_fields}  A traversing vector field $v$ on $X$ is called {\sf traversally generic} if: 
\begin{itemize}
\item  the field is boundary generic in the sense of Definition \ref{def.2.2_boundary-generic} ,
\item for each $v$-trajectory $\g \subset X$,% (not a singleton), 
the collection of  subspaces $\{\mathsf T_i(\g)\}_i$  is generic in $\mathsf T_\star$; that is, the obvious quotient map $\mathsf T_\star \to \prod_i \big(\mathsf T_\star/ \mathsf T_i(\g)\big)$ is surjective (this property does not depend on the choice of the point $\star \in \g$ and of the space $\mathsf T_\star$, transversal to $\g$).
\end{itemize}

We denote by $\mathcal V^\ddagger(X)$ the space of all traversally generic vector  fields on $X$. \hfill $\diamondsuit$
\end{definition}

In fact, $\mathcal V^\ddagger(X)$ is an open and dense in the space of all traversing vector fields on $X$ \cite{K1}. %\smallskip

For a \emph{traversally  generic} vector field $v$ on a $(n+1)$-dimensional $X$, the trajectory space $\mathcal T(v)$ is stratified by subspaces, labeled with the elements $\om$ of a universal  poset $\mathbf\Om^\bullet_{|\sim|' \leq n} \subset \mathbf \Om^\bullet$ that is defined by the constraint $|\om|' \leq n$. It depends only on $\dim(X) = n+1$ (see \cite{K3} for the definition and properties of $\mathbf\Om^\bullet_{|\sim|' \leq n}$).

%%%%%%%%%%%%%%%%%%%%%%%% 
 
\section{Holography on manifolds with boundary and causality maps}

Let $X$ be a compact connected smooth $(2n+1)$-dimensional manifold with boundary and $v$ a smooth {\sf traversing} vector field. Such $v$ admits a {\sf Lyapunov function} $f: X \to \R$ so that $df(v) > 0$.  We assume that $v$ is boundary generic in the sense of Definition \ref{def.2.2_boundary-generic}.   

Let $\mathcal F(v)$ be the $1$-dimensional singular {\sf oriented foliation}, generated by the traversing $v$-flow. %Such foliations, under the name {\sf short foliations} were studied  in \cite{EG}.

We denote by $\g_x$ the $v$-trajectory through $x \in X$. Since $v$ is traversing, %and boundary generic, 
each $\g_x$ is homeomorphic either a closed segment, or to a singleton.\smallskip

A traversing vector field $v$ on $X$ induces a structure of a {\sf partially-ordered set} $(\d_1 X, \succ_v)$ on the boundary $\d_1 X$: for $x, y \in \d X$, we write $y \succ x$ if the two points lie on the same $v$-trajectory $\g$ and  $y$ is reachable from $x$ by moving in the $v$-direction. \smallskip
  
We denote by $\mathcal T(v)$ the {\sf trajectory space} of $v$ and by $\Gamma: X \to \mathcal T(v)$ the obvious projection. For a traversing $v$, $\mathcal T(v)$ is a compact space in the topology induced by $\Gamma$. 
Since for a traversing $v$, any trajectory intersects the boundary, we get that $\mathcal T(v)$ is a quotient of $\d_1 X$ modulo the relation $ \succ_v$. 
%\smallskip

A traversing and boundary generic $v$ gives rise to the {\sf causality (scattering) map} 
\begin{eqnarray}\label{def.causal_map}
C_v: \d_1^+X(v) \to \d_1^-X(v)
\end{eqnarray}
(see \cite{K2} or \cite{K4}) that takes any point $x \in \d_1^+X(v)$ to the unique consecutive point $y \in \g_x \cap \d_1^-X(v)$, $y \neq x$, which can be reached from $x$ in the $v$-direction. If no such $y$ is available, we put $C_v(x) = x$.  We stress that typically $C_v$ is a {\it discontinuous} map. 

We notice that, for any smooth positive function $\l: X \to \R_+$, we have $C_{\l\cdot v} = C_v$. Thus, the causality map depends only on the {\sf conformal class} of a traversing vector field $v$.
\smallskip

In the paper, we will discuss two kinds of related holography problems in the context of contact forms and traversing Reeb vector flows. To avoid technicalities, we prefer to state them first in rather general and nebulous way, to be adapted later to the context of the Reeb flows that admit Lyapunov functions. 

The first kind of problem amounts to the question: 

{\sf To what extent given boundary data are sufficient for reconstructing the unknown bulk $X$ and the traversing $v$-flow on it?} \smallskip

This question may be represented symbolically by the following two diagrams.
 \begin{eqnarray}\label{3_Reconstruction Holo} 
\bullet  \text{\sf Holographic Reconstruction Problem}\nonumber \\ 
(\d_1X, \; \succ_v, )\; & \stackrel{\mathbf{??}}{\longrightarrow} & \;
 (X,\; \mathcal F(v)),\\
 (\d_1X, \; \succ_v, \; f^\d)\; & \stackrel{\mathbf{??}}{\longrightarrow} & \;
 (X,\; \mathcal F(v),\; f),
 \end{eqnarray}
 where $\succ_{v}$ denotes the partial order on boundary, defined by the causality map $C_v$, and the symbol ``$\stackrel{\mathbf{??}}{\longrightarrow}$" indicates the unknown ingredients of the diagram. Here $f$ denotes a Lyapunov function for the vector field $v$,  $\mathcal F(v)$ denotes the $1$-dimensional oriented foliation, produced by $v$, and the function $f^\d =: f|_{\d X}$. 
\smallskip

The second kind of problem is stated as follows.\smallskip 

{\sf Given two manifolds, equipped with traversing flows, and a transformation (a smooth diffeomorphism) $\Phi^\d$ of their boundaries, respecting the relevant boundary data (such as the partial order $\succ_v$ on the boundary $\d X$ or/and the function $f^\d$), is it possible to extend $\Phi^\d$ to a transformation $\Phi$ of the manifolds that respects the corresponding flows-generated structures in their interior?} \smallskip

 This problem is may be represented by the commutative diagrams:
\begin{eqnarray}\label{3_Extension Holo_A}
\bullet  \text{\sf Holographic Extension Problem \quad} \nonumber \\ 
 (\d_1X_1, \; \succ_{v_1})\; \stackrel{\mathsf{inc}}{\longrightarrow} \; (X_1,\; \mathcal F(v_1))\nonumber \\ 
 \quad \downarrow \; \Phi^\d  \quad \quad  \quad \quad \quad  \quad \downarrow \; ?? \;\; \Phi \quad 
 \\
 (\d_1X_2, \; \succ_{v_2})\; \stackrel{\mathsf{inc}}{\longrightarrow} \; (X_2,\; \mathcal F(v_2)) \nonumber \\
 \nonumber \\
 %%%%%%%%%
(\d_1X_1, \; \succ_{v_1},\; f_1^\d)\; \stackrel{\mathsf{inc}}{\longrightarrow} \; (X_1,\; \mathcal F(v_1), f_1)\nonumber \\ 
 \quad \downarrow \; \Phi^\d  \quad \quad  \quad \quad \quad  \quad \downarrow \; ?? \;\; \Phi \quad 
 \\
 (\d_1X_2, \; \succ_{v_2}, \; f_2^\d)\; \stackrel{\mathsf{inc}}{\longrightarrow} \; (X_2,\; \mathcal F(v_2),\; f_2).\nonumber 
 \end{eqnarray}
 Again, $f_i$ is a Lyapunov function for the traversing vector field $v_i$, and $f_i^\d$ stands for the restriction of $f_i$ to the boundary of $X_i$, $i= 1,2$.
 
 These two types of problems come in a big variety of flavors, depending on the more or less rich boundary data and on the anticipated quality of the transformations $\Phi$ (homeomorphisms, $\mathsf{PD}$-homeomorphisms, H\"{o}lder homeomorphisms with some control of the H\"{o}lder exponent, and diffeomorphisms) (see \cite{K2}, \cite{K4}, \cite{K5}).
 
 %%%%%%%%%%%%%%%%%%%%%%%%%%%%%%%%%%%
 
 \section{On contact forms and contact vector fields on manifolds with boundary}
For the reader convenience, we present a few basic properties of contact forms and contact vector fields we will rely upon.\smallskip

We adopt the notation $Op(Y)$ for the vicinity of a topological subspace $Y$ in the ambient space $X$.  \smallskip

Given a vector $v$ and an exterior $k$-form $\a$, in what follows, we denote by  $v \,\rfloor \, \a$ the $(k-1)$-form $\a(v \wedge \sim)$. \smallskip

$\bullet$ Let $\a$ be a differential $k$-form on a smooth manifold $X$ and let $Z \subset X$ be a smooth submanifold. We use somewhat unconventional notations: we write ``$\a|_Z$" for the restriction of the section $\a: X \to \bigwedge^k T^\ast X$ to the section $\a|: Z \to \bigwedge^k T^\ast Z$, and we write  ``$\a ||_Z$" for the restriction of $\a$ to the  section of the bundle  $\bigwedge^k T^\ast X|_Z \to Z$ ($\a ||_Z$ not to be confused with the restriction of $\a$ to the submanifold $Z \subset X$).

\begin{definition}\label{def.contact}
A {\sf coorientable contact structure} on a $(2n+1)$-dimensional smooth orientable manifold $X$ is a $2n$-dimensional distribution $\xi$ of cooriented hyperplanes in the tangent bundle $TX$, generated by the kernels of a differential {\sf contact $1$-form} $\b$ on $X$ such that $\b \wedge (d\b)^n > 0$ everywhere (equivalently, the restriction $(d\b)^n|_\xi > 0$ everywhere). %Since $TM$ is an orientable manifold, the coorientability of the distribution $\xi$ is equivalent to its orientability. 
\smallskip

An {\sf coorientable almost contact structure} $\xi$ on a $(2n+1)$-dimensional smooth manifold $X$ is a $2n$-dimensional distribution $\eta$ of cooriented hyperplanes in the tangent bundle $TX$, generated by the kernels of a differential $1$-form $\b$ on $X$, a nonvanishing vector field $v$ such that $\b(v) =1$  (so, $v$ is transversal to the distribution $\eta$), and a nondegenerate skew-symmetric $2$-form $\om$ on $\eta$ (equivalently, a complex structure $J: \eta \to \eta$ such that $J^2 = - \mathsf{id}$ and $\om(J(u), J(w)) = \om(u, w)$ for all $u, w \in \eta$; the choice of such $J$ is unique up to a homotopy).  
\hfill $\diamondsuit$
\end{definition}
\smallskip
 
 If $\l: X \to \R_+$ is a positive smooth function, then by a direct computation, $$(\l \b) \wedge (d(\l \b))^n = \l^{n+1} \b \wedge \Big(d\b + \frac{d\l}{\l}\wedge \b\Big)^n = \l^{n+1}[\b \wedge (d\b)^n]  > 0.$$ 

%%%%%%%%%%%GABI
%As a hyperplane distribution, a cooriented $\xi$ may be identified with the orthogonal complement to a section of the spherical bundle $\mathcal SX \to X$, associated with the tangent bundle $TX$ and having the sphere $S^{2n} $ for its fiber. %We consider the only the case when $X$ and the hyperplane are oriented. 
%We denote by $\xi^\d$ the restriction of the distribution $\xi$ to the boundary $\d_1 X$ (not to be confused with the intersection $\xi \cap \d_1 X$). 
%Since the  normal (quotient) $1$-bundle $\nu = TX |_{\d_1 X}/\,T(\d_1 X)$ is trivial, the distribution $\xi^\d$ is a $(2n)$-dimensional orientable subbundle of the bundle $\nu \oplus T(\d_1 X) \to \d_1 X$.\smallskip
%%%%%%%%%%%%%%%%%%

\begin{definition}\label{def.Reeb}
A vector field $v_\b$ on $X$ is called the {\sf Reeb field} for a contact $1$-form $\b$, if $\b(v_\b) =1$ and $v_\b \in \ker(d\b)$.\hfill $\diamondsuit$
\end{definition}

The property $\b \wedge (d\b)^n \neq 0$ implies that the kernel $\ker(d\b)$ is $1$-dimensional. 
%
%Evidently, the Reeb vector field $v_\b$ is a particular case of a contact vector field for $\xi = \xi(\b)$. 
%
%\begin{lemma}\label{lem.from_v_to_beta} For a given vector field $v \neq 0$, the $1$-forms $\b$ with the properties: %$\dim(\ker(d\b)) =1$, 
%$v \in \ker(d\b)$ and $\b(v) =1$ form a convex set.   %Moreover, such $\b$ depends smoothly on $v$.
%\end{lemma}
%
%\begin{proof} By linear algebra, the claim follows.\smallskip
%\end{proof}
%
Thus, any contact form $\b$ generates a unique Reeb vector field $v_\b$. \smallskip

%\begin{example} For the contact form $\b = dz + x\, dy$ on $\R^3$, the Reeb vector field $v_\b = \d_z$.
 %
% \hfill $\diamondsuit$ 
%\end{example}

Consider a cooriented contact structure $\xi$, generated by a contact form $\b$. Then any other contact form $\b'$ that generates the same $\xi$ is proportional to $\b$ with the coefficient of proportionality being a smooth positive function. The Reeb vector fields $v_\b$ and $v_{\b'}$, although both being transversal to $\xi$, may have fundamentally different dynamics.  

We will say that such Reeb vector fields are {\sf amenable} to $\xi$. \smallskip

Recall the following well-known fact.

\begin{lemma} For a contact structure $\xi_\b$, generated by a contact $1$-form $\b$,  there is a $1$-to-$1$ correspondence between the space $C^\infty(X)$ of smooth functions $f$ and the space $\mathcal Reeb(X, \xi_\b)$ of Reeb vector fields amenable to $\xi_\b$. 

The correspondence is given by the formula $f \longrightarrow v_{e^f\cdot \b}.$  
%\hfill $\diamondsuit$
\end{lemma}

\begin{proof} Let us fix a contact form $\b$ such that $\ker(\b) = \xi_\b$. Then any other contact form $\b'$, producing $\xi_\b$ can be written as $e^f \b$ for some smooth function $f$. Let us compare two contact forms, $e^f \b$ and $e^g \b$.
By the definition of Reeb fields, we get $e^f \b(v_{e^f \b}) = 1 =  e^g \b(v_{e^g \b})$.  If  $v_{e^f \b} = v_{e^g \b}$, this is equivalent to the equation $e^{f} = e^{g}$, or to the equation $f= g$. Therefore, the correspondence $f \longrightarrow v_{e^f\cdot \b}$ is a continuous bijection.
\end{proof}

%%%%%%%%%%%%%%%%%
Let $X$ be a compact connected smooth $(2n+1)$-dimensional  manifold $X$ with boundary. 

% We denote by $\mathsf{Cont. forms}(X)$ the space of smooth contact forms $\b$ on $X$, and by 
% \hfill\break $\mathsf{Cont. forms}(X, \d_1X; Op(\b^\d))$ the space of smooth contact forms $\b$ on $X$, such that the section $\b^\d =\b|: Op(\d_1X)  \to T^\ast(X)|_{Op(\d_1X)}$  is fixed in the vicinity $Op(\d_1X)$ of $\d_1X$. Similarly, we denote by $\mathsf{Cont}(X)$ the space of smooth contact structures $\xi$ on $X$, and by $\mathsf{Cont}(X, \d_1X; Op(\xi^\d))$  the space of smooth contact structures $\xi$ on $X$, such that the distribution $\xi \subset T_\ast(X)|_{Op(\d_1X)}$  is fixed.\smallskip

%%%%%%%%%%%%%%%%%%%%%
\begin{definition}\label{contact_field} A smooth vector field $u$ on $X$ is called  {\sf contact field} for a given contact $1$-form $\b$, if $\mathcal L_u \b = \l \cdot \b$ for some smooth function $\l: X \to \R$. \hfill $\diamondsuit$
\end{definition}

Since $\mathcal L_u(\mu\b)= (d\mu(u) + \mu\l)\b$, where $\l, \mu$ are smooth functions and $\mathcal L_u(\b) = \l\b$, the notion of contact vector field depends only on the contact structure $\xi_\b$, generated by a contact form $\b$. %In the definition, we do not require that $\l > 0$.
\smallskip

%%%%%%%%%%%%%%%%%%%%%%%%%%%%%%%%%%% 

%{\bf WORK} To derive Corollary \ref{prop.X_is_FI(u)-invariant} of Theorem \ref{th.main} we will rely on a couple of lemmas.

%%%%%%%%%%%%%%%%%%%%%%%%%%%%%%%%%%%%%%

To derive the next Proposition \ref{prop.X_is_FI(u)-invariant} we will rely on a couple of lemmas.

\begin{lemma}\label{lem.beta_w_invariant} For a vector field $w \in \ker(\b)$,  $\mathcal L_w \b = 0$ if and only if $w = 0$. % \in \mathcal Iso(d\b)$. 
\end{lemma}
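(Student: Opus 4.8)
The plan is to reduce the vanishing condition $\mathcal L_w\b = 0$ to a pointwise statement about the interior product $w\,\rfloor\,d\b$, and then to extract $w=0$ from the non-degeneracy of $d\b$ on the contact distribution $\ker(\b)$. The reverse implication is trivial, since $\mathcal L_0\b = 0$, so all the content is in showing $\mathcal L_w\b = 0 \Rightarrow w = 0$.

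First I would apply Cartan's magic formula $\mathcal L_w\b = w\,\rfloor\,d\b + d(w\,\rfloor\,\b)$. The hypothesis $w \in \ker(\b)$ means precisely that $w\,\rfloor\,\b = \b(w) \equiv 0$, so its exterior derivative vanishes identically, and we are left with the clean identity $\mathcal L_w\b = w\,\rfloor\,d\b$. Hence the equation $\mathcal L_w\b = 0$ is equivalent to $w\,\rfloor\,d\b = 0$ as a $1$-form on $X$.

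Next I would invoke the contact condition. Recall (as recorded in Definition \ref{def.contact} and used in Lemma \ref{lem.unique_w}) that $d\b|_{\ker(\b)}$ is a non-degenerate skew-symmetric bilinear form. Since $w\,\rfloor\,d\b$ vanishes on all of $TX$, it in particular vanishes when both arguments are restricted to $\ker(\b)$, i.e. $d\b(w,u)=0$ for every $u \in \ker(\b)$; as $w$ itself lies in $\ker(\b)$, non-degeneracy of $d\b|_{\ker(\b)}$ forces $w=0$. I do not anticipate any genuine obstacle; the only point requiring a moment's care is to observe that a $1$-form vanishing on all of $TX$ certainly restricts to zero on the subbundle $\ker(\b)$, so the non-degeneracy argument applies. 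Alternatively, one can argue without restricting: $w\,\rfloor\,d\b = 0$ says $w$ lies in the one-dimensional kernel of $d\b$, which is spanned by the Reeb field $v_\b$, so $w = c\, v_\b$ for some function $c$; but then $0 = \b(w) = c\,\b(v_\b) = c$, whence $w = 0$.
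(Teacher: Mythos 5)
Your proof is correct and follows essentially the same route as the paper: Cartan's formula together with $\b(w)=0$ reduces the hypothesis to $w\,\rfloor\,d\b=0$, and the non-degeneracy of $d\b|_{\ker\b}$ forces $w=0$. The alternative closing argument via the one-dimensional kernel of $d\b$ is a harmless variant of the same observation.
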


\begin{proof} By the formula $\mathcal L_w \b = w\rfloor d\b + d(w\rfloor \b)  = w\rfloor d\b$, we get $\mathcal L_w \b = 0$ if and only if $w\rfloor d\b = 0$. Since the symplectic form  $d\b|_{\ker\b}$ is non-degenerated, it follows that $w = 0$.
\hfill
\end{proof}

\begin{lemma}\label{lem.boundary}% {\bf WORK} 
Let $v$ be a traversing vector field on a compact connected oriented smooth $(2n+1)$-dimensional  manifold $X$ with boundary. 
Assume that two cooriented contact structures $\xi_1$ and $\xi_2$ on $X$ share the same Reeb vector field $v$ of their contact forms $\b_1$ and $\b_2$.%\footnote{Hence, $\xi_1$ and $\xi_2$ are $\mathcal R$-equivalent.}  
%\smallskip

If the distributions-subbundles  $\xi_1^\d \subset T_\ast(X)\,|_{\d_1X}  \text{ and \,} \xi_2^\d \subset T_\ast(X)\,|_{\d_1X}$ %of $\xi_1$ and $\xi_2$, respectively, to 
 coincide, then $\xi_1= \xi_2$.% (can be transformed one into the other by a diffeotopy of $X$ which is constant on $\d X$). 
\end{lemma}

\begin{proof} Let $\b_i$ ($i = 1, 2$) be a $1$-form whose kernel is the hyperplane distribution $\xi_i$. By the definition of  Reeb vector field $v$, we have $v \rfloor d\b_i = 0$ and $v \rfloor \b_i =1$. The Cartan's identity $\mathcal L_{v}\b_i =  v \rfloor d\b_i + d(v \rfloor \b_i)$ implies that the directional derivative $\mathcal L_{v}\b_i =0$, i.e., $\b_i$ is $v$-invariant. Thus, the $1$-form $\b_i^\d$, viewed as a section of the bundle $T^\ast X|_{\d_1^+X(v)}$, spreads by  the $v$-flow uniquely along each $v$-trajectory to produce the invariant $1$-form $\b_i$. 
Using that $v$ is traversing, we conclude that $\b_i^\d$ determines $\b_i$ everywhere in $X$. 
As a result, the distributions $\xi_1, \xi_2$ are $v$-invariant. Thus, $\xi_1$ is determined by $\xi_1^\d$ and $\xi_2$ is determined by $\xi_2^\d$. Therefore, if  $\xi_1^\d= \xi_2^\d$, then $\xi_1= \xi_2$. %\smallskip
%If $\xi_1^\d = \xi_2^\d$,  we get $\b_2^\d = \l \cdot \b_1^\d$, where $\l: \d_1^+X(v) \to \R_+$ is a positive smooth function. Since $\b_1, \b_2$ both are $v$-invariant, it follows that $\b_2 = \l \cdot \b_1$ where $\l$ must be constant along each $v$-trajectory $\g$. Thus, along each trajectory $\g$, the the hyperplane fields $\xi_1|_\g$ and $\xi_2 |_\g$ coincide. Therefore, they coincide everywhere in $X$.
%{\bf WORK} Recall that the Gray stability theorem claims that, for any smooth family $\{\xi_t\}_{t \in [0, 1]}$ of contact structures on a closed manifold $X$, there exists a smooth isotopy $\{\psi_t\}_{t \in [0, 1]}$ such that $\psi_t^\ast(\xi_0) = \xi_t$ for all $t \in [0, 1]$. In fact, we may assume $X$ to be compact in this claim, provided $\xi_0^\d = \xi_1^\d$. {\bf ???}
%Indeed, we envelop properly the cylinder $(X \times [0, 1], \Xi)$ in an ambient compact cylinder $(\hat X \times [-\e, 1+\e], \hat\Xi)$  
%
%If $\xi_t$ is the kernel of a $1$-form $\b_t$, then the oriented transversality implies that $\b_t(v) >0$.  
%
\end{proof}

Given a contact form $\b$, any vector field on $X$ can be written as $u = h\cdot v_\b + w$, where $v_\b$ is the Reeb vector field, $h: X \to \R$ is a smooth function, and the vector field $w \in \xi_\b$.
Then $\mathcal L_u\b = \l\cdot \b$ for some smooth function $\l: X \to \R$ if and only if
$$\mathcal L_u\b \equiv \mathcal L_{h\cdot v_\b + w}\b = %\mathcal L_{h\cdot v_\b}\b + \mathcal L_w\b  = 
h \cdot \mathcal L_{v_\b}\b  + (v_\b\,  \rfloor \, \b)\, dh  + \mathcal L_w\b = \l \cdot \b.$$
Using that  $\mathcal L_{v_\b}\b \equiv 0$ and $v_\b\,  \rfloor \, \b = 1$, this is equivalent to the condition 
$1 \cdot dh  + \mathcal L_w\b = \l \cdot \b,$
or, by Cartan's formula and using that $w\,  \rfloor \, \b = 0$, to the condition
\begin{eqnarray}\label{eq.contact_field}
w \,  \rfloor \, d\b = - dh + \l \cdot \b.
\end{eqnarray}
Since $v_\b \,  \rfloor \, d\b = \mathbf 0$, (\ref{eq.contact_field}) implies that %{\bf WORK ??}  
$$0= - w \,  \rfloor \, (v_\b \,  \rfloor \, d\b) = v_\b \,  \rfloor \, (w \,  \rfloor \, d\b) = v_\b \,  \rfloor \, (- dh + \l \cdot \b) = -dh(v_\b) + \l.$$
Thus, $\l = dh(v_\b)$, and the equation (\ref{eq.contact_field}) transforms into the equation  
\begin{eqnarray}\label{eq.contact_field_A}
w \,  \rfloor \, d\b = - dh + dh(v_\b) \cdot \b
\end{eqnarray}
with respect to $w  \in \xi_\b$.
Restricting (\ref{eq.contact_field_A}) to the hypersurface $\{h = c\}$ gives the equations
 \begin{eqnarray} \label{eq.contact_field_C}
 (w \,  \rfloor \, d\b)\,|_{h^{-1}(c)} & = & - dh(v_\b) \cdot\b\,|_{h^{-1}(c)},\\
 (w \,  \rfloor \, \b)\,|_{h^{-1}(c)} & = & 0. \nonumber %\\
 %(w \,  \rfloor \, d\b)\,|_{\ker\b} & = & -dh \,|_{\ker\b}
 \end{eqnarray}
 
\begin{lemma}\label{lem.unique_w} Let $\b$ be a contact form. For any choice of the smooth function $h$, there exists a unique vector field $w \in \ker\b$ that satisfies the equation  (\ref{eq.contact_field_A}).% (equivalently, the constrain (\ref{eq.contact_field_C})).
\end{lemma}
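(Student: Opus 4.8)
The plan is to exploit the non-degeneracy of $d\b$ on the contact distribution $\xi_\b = \ker\b$ and reduce the $1$-form equation (\ref{eq.contact_field_A}) on $X$ to a pointwise-invertible linear problem on the $2n$-dimensional bundle $\xi_\b$.

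First I would record the splitting $TX = \langle v_\b\rangle \oplus \xi_\b$, valid because $\b(v_\b)=1$, so that a $1$-form on $X$ which annihilates $v_\b$ is completely determined by its restriction to $\xi_\b$. Accordingly, I would verify that both sides of (\ref{eq.contact_field_A}) annihilate $v_\b$. The left side does: for any $w\in\xi_\b$, $(w\rfloor d\b)(v_\b) = -(v_\b\rfloor d\b)(w) = 0$ by antisymmetry of $d\b$ and $v_\b\rfloor d\b = 0$. The right side does too: $(-dh + dh(v_\b)\,\b)(v_\b) = -dh(v_\b) + dh(v_\b)\cdot\b(v_\b) = 0$. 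Hence (\ref{eq.contact_field_A}), as an identity of $1$-forms on $X$, is equivalent to its restriction to $\xi_\b$, namely $(w\rfloor d\b)|_{\xi_\b} = \big(-dh + dh(v_\b)\,\b\big)|_{\xi_\b}$.

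Next I would invoke the defining property of a contact form: $(d\b)^n|_{\xi_\b}\neq 0$, so $\om_\b := d\b|_{\xi_\b}$ is a fibrewise symplectic (non-degenerate, skew-symmetric) $2$-form on $\xi_\b$. The associated bundle map $\flat\colon \xi_\b \to \xi_\b^\ast$, $w \mapsto (w\rfloor d\b)|_{\xi_\b}$, is therefore a smooth fibrewise isomorphism. Setting $\s := \big(-dh + dh(v_\b)\,\b\big)|_{\xi_\b}$, a smooth section of $\xi_\b^\ast$, I would define $w := \flat^{-1}(\s)$. Since $\flat^{-1}$ is a smooth bundle isomorphism and $\s$ is smooth, $w$ is a smooth vector field lying in $\ker\b$, and by construction $(w\rfloor d\b)|_{\xi_\b} = \s$. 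Combined with the compatibility check of the previous paragraph, this yields $w\rfloor d\b = -dh + dh(v_\b)\,\b$ on all of $X$, giving existence. Uniqueness is immediate: if $w, w'\in\ker\b$ both solve (\ref{eq.contact_field_A}), then $(w-w')\rfloor d\b = 0$, so $\flat(w-w') = 0$, whence $w = w'$ by non-degeneracy of $\om_\b$.

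The argument is essentially a fibrewise linear-algebra inversion, so I do not anticipate a serious obstacle; the only point requiring care is the reduction in the second step — confirming that the right-hand side of (\ref{eq.contact_field_A}) annihilates $v_\b$, so that solving the restricted equation on $\xi_\b$ genuinely recovers the full equation on $X$ rather than losing its $v_\b$-component. Once that compatibility is secured, the non-degeneracy of $d\b|_{\xi_\b}$ does all the remaining work.
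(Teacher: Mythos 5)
Your proposal is correct and follows essentially the same route as the paper: both arguments reduce equation (\ref{eq.contact_field_A}) to its restriction on $\ker\b$, solve it there uniquely via the non-degeneracy of $d\b|_{\ker\b}$, and then observe that the $v_\b$-component of the equation holds automatically since both sides annihilate $v_\b$. Your explicit check that the right-hand side kills $v_\b$ is exactly the compatibility the paper verifies at the end of its proof.
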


\begin{proof} %{\bf WORK} 
Since $d\b|_{\ker(\b)}$ is a non-degenerated skew-symmetric bilinear form, for any $1$-form $\a$, viewed as a section of the dual bundle $(\ker(\b))^\ast \to X$, there exists a section $w(\a)$ of the bundle $\ker(\b) \to X$ such that $w(\a) \,  \rfloor \, d\b = \a$. Therefore, the restriction of equation (\ref{eq.contact_field_A}) to $\ker(\b)$ gives the solvable equation
\begin{eqnarray}\label{eq.contact_field_B} 
(w \,  \rfloor \, d\b) |_{\ker(\b)} = - (dh) |_{\ker(\b)}
\end{eqnarray}
with respect to the vector field $w \in \ker \b$.
By the same token, the solution $w$ of (\ref{eq.contact_field_B}) is unique.  

If $w \in \ker \b$ is the solution  of (\ref{eq.contact_field_B}), then it is automatically a solution of (\ref{eq.contact_field_A}), since $v_\b \,  \rfloor \,(w \,  \rfloor \, d\b) = v_\b \,  \rfloor \,(- dh + dh(v_\b)) \cdot \b$ produces the identity $0=0$.
%For any choice of $h$, the constraint (\ref{eq.contact_field_A}) determines the vector field $w$ up to a solution of the homogeneous equations $\{z \,  \rfloor \, d\b = \mathbf 0,\; z \,  \rfloor \, \b = 0 \}$.  The property of $d\b$ being non-degenerated implies that the vector field $z = \l \cdot v_\b$, and the property $z \,  \rfloor \, \b = 0$ leads to $\l = 0$. Therefore, if the solution $w$ of (\ref{eq.contact_field}) exists for a given $h$, then it is unique. 
%
%{\bf WORK when the solution exists ????}
\end{proof}

Recall that the Gray Stability Theorem \cite{Gray} claims that, for any smooth family $\{\xi_t\}_{t \in [0, 1]}$ of contact structures on a \emph{closed} smooth manifold $X$, there exists a smooth isotopy $\{\phi_t\}_{t \in [0, 1]}$ such that $(\phi_t)^\ast(\xi_t) = \xi_0$ for all $t \in [0, 1]$. A version of the Gray Stability Theorem is available for manifolds with boundary; among the multitude of other places, its formulation and proof may be found  in \cite{EM}, page 95. 
\smallskip

%Lemma \ref{lem.Relative_Gray's_th} (whose proof we skip) states a relative version of Gray's theorem \cite{Gray}. Its proof may be found, among the multitude of other places, in \cite{EM}, page 95.  
%
%\begin{lemma}\label{lem.Relative_Gray's_th} 
%\bf{(Relative_Gray's_theorem)}
%Let $\{\xi_t\}_{t \in [0,1]}$ be a family of contact structures on a compact connected manifold $X$ with boundary such that $\xi_t = \xi_0$ in the vicinity $Op(\d_1X)$ of $\d_1X$ in $X$. 

%Then there is an isotopy $\phi_t: X \to X$, fixed in $Op(\d_1X)$, such that $(\phi^t)^\ast(\xi_0) = \xi_t$ for all $t \in [0, 1]$. \hfill $\diamondsuit$
%\end{lemma}

We can generalize a bit the {\it relative} Gray Stability Theorem % Lemma \ref{lem.Relative_Gray's_th} 
by assuming that $\dot \b_t ||_{\d_1X} = dh_t ||_{\d_1X}$, where $t \in [0,1]$, $\dot\b_t =: \frac{d}{dt} \b_t$, and the functions $h_t$ satisfy the three properties, described in the next lemma. Since the argument validating  this lemma is a slight variation of the standard Moser trick \cite{Mo}, we omit lemma's proof.

%are as in (\ref{eq.function for dX}).

\begin{lemma}\label{lem.isotopy_of_xi} %{\bf WORK} 
 Let $X$ be a compact connected smooth $(2n+1)$-dimensional  manifold $X$ with boundary. Let $\{h_t: \hat X \to \R\}_{t \in [0,1]}$ be a family of smooth functions, each of which {\sf (1)} has $0$ as its regular value, {\sf (2)} $h_t^{-1}(0) = \d_1X$, and {\sf (3)} $h_t^{-1}((-\infty, 0]) = X$.  %Let $\b_0$ a contact $1$-form on $X$ whose Reeb vector field $v_0$ is traversing.  Let $\xi_0$ be a smooth cooriented contact structure, generated by $\b_0$. \smallskip

Consider a family $\{\xi_t\}_{t \in [0, 1]}$ of cooriented contact structures on $X$, generated by a smooth family of contact forms $\{\b_t\}_{t \in [0, 1]}$, such that the $t$-derivative $\dot \b_t = dh_t$ along $\d_1X$.\footnote{The property $\dot \b_t ||_{\d_1X} = dh_t ||_{\d_1X}$ implies that $\b_t |_{\d_1X}$ is constant.} %as subbundles of $TX|_{\d_1X}$.
%the sections %$\eta_t |_{\d_1X} = \xi |_{\d_1X}$  and 
% $\b_t^\d = \b^\d$ for all $t \in [0, 1]$. %\footnote{Recall that $\b_t^\d: \d_1X \to \mathcal S(T_\ast X)$ is not the restriction of the form $\b_t$ to the boundary $\d_1X$ !}.  
 %Assume that $v$ is also the Reeb field for the terminal contact form $\b_1$ (thus, $\xi$ and $\xi_1$ are $\mathcal R$-equivalent). 
 \smallskip

 Then there is a smooth $t$-family of diffeomorphism $\{\phi_t: X \to X\}_{t \in [0,1]}$, %identical on $\d_1 X$, 
 such that  $(\phi_t)^\ast(\xi_t) = \xi_0$; in particular, $\phi_1$ is a contactomorphism that takes $ \xi_1$ to $\xi_0$. \hfill $\diamondsuit$
\end{lemma}

\begin{lemma}\label{lem.tangent_w} Let $\b$ be a contact form. For any choice of smooth function $h$, the vector field $w \in \ker\b$ that satisfies (\ref{eq.contact_field_A}) is tangent to any hypersurface $\{h = c\}_{c \in \R}$. 
\end{lemma}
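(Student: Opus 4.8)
The plan is to prove the statement by showing directly that $dh(w) = 0$ at every point, which is exactly the assertion that $w$ is tangent to each level set $\{h = c\}$. The natural move is to contract the defining equation (\ref{eq.contact_field_A}) with the field $w$ itself and then exploit two facts: the antisymmetry of $d\b$, and the membership $w \in \ker\b$.

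Concretely, I would begin from
\[
w \,\rfloor\, d\b = - dh + dh(v_\b)\cdot\b,
\]
apply the operation $w \,\rfloor\,(\cdot)$ to both sides, and read off the resulting terms. On the left, the paper's convention $u \,\rfloor\, \a = \a(u \wedge \sim)$ gives $w \,\rfloor\,(w \,\rfloor\, d\b) = d\b(w, w) = 0$, since $d\b$ is a $2$-form and hence antisymmetric. On the right, the first term contributes $-dh(w)$, while the second contributes $dh(v_\b)\cdot\b(w)$, which vanishes because $w \in \ker\b$ forces $\b(w) = 0$. Equating the two sides yields
\[
0 = -dh(w),
\]
that is, $dh(w) = 0$.

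From $dh(w) = 0$ I would then conclude that $w$ lies pointwise in $\ker(dh)$; wherever $c$ is a regular value of $h$, this kernel is precisely the tangent space to the hypersurface $\{h = c\}$, so $w$ is tangent to that level set. (At critical levels there is no smooth hypersurface and the assertion is vacuous, or one appeals to continuity.) Note that the existence and uniqueness of the field $w$ solving (\ref{eq.contact_field_A}) are already supplied by Lemma \ref{lem.unique_w}, so there is nothing to set up beyond the contraction.

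I do not expect a genuine obstacle here: the content is a single antisymmetry computation. The only point requiring care — rather than difficulty — is bookkeeping with the interior-product convention, so that $w \,\rfloor\,(w \,\rfloor\, d\b)$ really evaluates to $d\b(w,w)$ with no spurious sign, and so that the $\b(w) = 0$ cancellation is applied to the correct term. Once the convention is fixed, the whole argument collapses to the one display above.
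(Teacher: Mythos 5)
Your proposal is correct and is exactly the paper's argument: contract (\ref{eq.contact_field_A}) with $w$, use the antisymmetry $d\b(w,w)=0$ and $\b(w)=0$ to conclude $dh(w)=0$, hence $w$ is tangent to the level sets of $h$. Nothing further is needed.
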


\begin{proof} It follows from (\ref{eq.contact_field_A}) that $0= w \,  \rfloor \,(w \,  \rfloor \, d\b) =  w \,  \rfloor \, ( - dh + dh(v_\b)\cdot \b)$. Since $w \,  \rfloor \, \b = 0$, we get $w \,  \rfloor \, dh = 0$. Thus, $w$ is tangent to any hypersurface $h^{-1}(c) \subset X$, where $c \in \R$.
\end{proof}

For $h$ such that $h^{-1}(0) = \d_1 X$, by Lemma \ref{lem.tangent_w}, the vector field  $u = h\cdot v_\b + w$ is tangent to $\d_1X$,  since $h|_{\d_1X} \equiv 0$. Thus, we getting the following claim.

\begin{proposition}\label{prop.X_is_FI(u)-invariant} Let $\b$ be a contact form on $X$. For any smooth function $h: \hat X \to \R$ which has the three properties listed in (\ref{eq.function for dX}), 
and for any vector field $w \in \ker(\b)$ that satisfies the equation $$w \,  \rfloor \, d\b = - dh + dh(v_\b) \cdot \b,$$ 
the one-parameter family diffeomorphisms $\{\phi_t(w): X \to X\}_t$ that integrates the field $w$ is well-defined for all $t \in \R$. 

Also, the one-parameter family diffeomorphisms $\{\psi_t(u): X \to X\}_t$ that integrates the contact vector field $u = h\cdot v_\b + w$, which satisfies the equation $\mathcal L_u \b = dh(v_\b)\cdot \b,$ is well-defined for all $t \in \R$. \hfill $\diamondsuit$
\end{proposition}

%Consider the subbundle $\mathcal Iso(d\b)$ of $\xi(\b)$, defined by 
%\begin{eqnarray}\label{eq.iso}
%\mathcal Iso(d\b) =: \{ z \in \ker(\b)\big|\; z \rfloor d\b \equiv 0\}
%\end{eqnarray}

%The following examples exhibit simple cases of contact forms and their Reeb vector fields on manifolds with boundary. \smallskip

%\begin{example}\label{ex.contact_field} 
\noindent {\bf Example 4.1.} 

 $\bullet$ Let $X = \{x^2 + y^2 +z^2 \leq 1\} \subset \R^3$ and $\b = dz + x\, dy$. The Reeb vector field $v_\b$ is $\d_z$. 
 It is tangent to the $2$-sphere $\d_1 X$ along its equator $E = \{z=0\} \cap X$. 
 Thus, the restriction of $\b$ to $E$ is equal to the restriction of the form $x\, dy$ to $E$. 
If $\theta$ is the natural parameter on the circle $E$, then $\b|_E = cos^2(\theta)\, d\theta$. It is non-negative everywhere on $E$ and vanishes at $\theta = \pm \pi/2$, or at $(x, y, z) = (0, \pm 1, 0)$.    
 \smallskip
 
 If in (\ref{eq.contact_field_A}), we pick $h = (x^2 + y^2 +z^2 -1)/2$  then the equation (\ref{eq.contact_field_A}) reduces to 
 $$w \,  \rfloor \, (dx\wedge dy) = -(x\, dx + y\, dy + z\, dz) + z(dz + x\, dy).$$ 
 Coupling this equation with the equation $w \,  \rfloor \, (dz + x\, dy) = 0$, we get a formula 
 $$w = (xz-y)\,\d_x + x\, \d_y -x^2\, \d_z\; $$
 for the  vector field $w \in \xi_\b$ which is tangent to the concentric spheres. Note that the fixed point set of the $w$-flow is the $z$-axis. Finally, put $$u = h\cdot v_\b + w= (xz-y)\d_x + x\, \d_y + \frac{1}{2}\big(-x^2 +y^2 +z^2 - 1\big)\d_z.$$ The $u$-flow preserves the distribution $\xi_\b$ and the sphere $\d_1X$. It has exactly two fixed points, $(0, 0, \pm 1)$.  \smallskip \smallskip
 
 $\bullet$ We pick the shell $\{1 \leq x^2 + y^2 +z^2 \leq 4\}$ for the role of $X$ and the same contact form $\b = dz + x\, dy$. Then $\d_2^+X(v_\b)$ is the equator $E_1 = \{x^2 + y^2 +z^2 =1\} \cap \{z=0\}$ and $\d_2^-X(v_\b)$ is the equator $E_2 = \{x^2 + y^2 +z^2 =4\} \cap \{z=0\}$. The same computation leads to $\b|_{E_1} \geq 0$ and $\b |_{E_2} \leq 0$. In other words, $\pm \b |_{\d_2^\pm X(v_\b)} \geq 0$.
\smallskip

Now we consider the function $h = (x^2 + y^2 +z^2 - 4)(x^2 + y^2 + z^2 -1)$. We leave to the reader to solve the equation 
$w \,  \rfloor \, d\b = - dh + dh(v_\b) \cdot \b$
for $w \in \xi_\b$.\smallskip

$\bullet$ The next example is also given by $\b = dz + x\, dy$, but $h = z$. The choice of $X \subset \R^3$ is unimportant. In this case, $v_\b = \d_z$ and  (\ref{eq.contact_field_A}) reduces to
$$w \,  \rfloor \, (dx\wedge dy) = - dz + (dz + x\, dy) = x\, dy.$$
Thus, $w = x\,\d_x \; \in \xi_\b$ and $u = x\, \d_x + z \d_z$. The vector field $w$ is tangent to the planes $\{z = c\}_c$ and the vector field $u$ is tangent to the plane $\{z = 0\}$. The $u$-flow keeps the form $\b$ invariant. The plane $\{x = 0\}$ is the fixed point set of  the $w$-flow and the line $\{x= 0, z= 0\}$ is the fixed point set of  the $u$-flow.
\hfill $\diamondsuit$
% \end{example}
\smallskip

\smallskip

\noindent {\bf Example 4.2.} 
%\begin{example}\label{ex.contact_field_5D} 
\smallskip

$\bullet$ Let $X = \{x_1^2 + y_1^2 + x_2^2 + y_2^2+z^2 \leq 1\} \subset \R^5$ and $\b = dz + x_1 dy_1 + x_2 dy_2$. We pick $h = (x_1^2 + y_1^2 + x_2^2 + y_2^2+z^2 -1)/2$. The Reeb vector field $v_\b = \d_z$ and the $3$-form 
  $$\b \wedge d\b = dz \wedge dx_1 \wedge dy_1 +  dz \wedge dx_2 \wedge dy_2 
  + x_1\, dy_1 \wedge dx_2  \wedge dy_2 + x_2\, dy_2 \wedge dx_1  \wedge dy_1.$$
 Let $E$ be the equator $\{z=0\} \cap \{x_1^2 + y_1^2 + x_2^2 + y_2^2 =1\}$, a $3$-sphere $\d_2^-X(v_\b) \subset \d_1X = S^4$. Thus $\b \wedge d\b|_E = (x_1\, dy_1 \wedge dx_2  \wedge dy_2 + x_2\, dy_2 \wedge dx_1  \wedge dy_1)|_E$.
 
 A somewhat lengthy computation in the spherical coordinates $\psi, \theta \in [0, \pi], \phi \in [0, 2\pi]$ on the $3$-sphere $E$ leads to the formula 
 $$\b \wedge d\b|_E = \big[(sin^2\psi\, cos^2\,\psi+ sin^4\psi\, sin^2\theta\, cos^2\phi)\, sin\theta\big]\; d\psi \wedge d\theta \wedge d\phi \; \geq 0.$$    
 
Therefore, $(\b \wedge d\b)|_{\d_2^- X(v_\b)} \geq \;  0.$ Note that $(\b \wedge d\b)||_{\d_2^- X(v_\b)} = 0$ along the circle $\{x_1= 0\} \cap  \{x_2= 0\} \cap E$ in the $(y_1, y_2)$-plane. \hfill $\diamondsuit$
%$\bullet$ Let us pick the shell $\{1 \leq x^2 + y^2 +z^2 \leq 4\}$ for the role of $X$ and the same contact form $\b$ as above. Then $\d_2^+X(v_\b)$ is the equator $E_1 = \{x^2 + y^2 +z^2 =1\} \cap \{z=0\}$ and $\d_2^-X(v_\b)$ is the equator $E_2 = \{x^2 + y^2 +z^2 =4\} \cap \{z=0\}$. The same computation leads to $\b|_{E_1} \geq 0$ and $\b |_{E_2} \leq 0$. In other words, $\pm \b |_{\d_2^\pm X(v_\b)} \geq 0$.
% \end{example}

\smallskip
 
 These examples motivate the following conjecture about the Morse stratifications (see (\ref{eq.2.1})) defined by Reeb vector fields $v_\b$.
\begin{conjecture}\label{conj.positive_on_Morse_strata}
Let $\b$ be a smooth contact form on a compact connected manifold $X$ and $v_\b$ its boundary generic Reeb vector field. Then: 
\begin{itemize}
\item For an even $j$,\; $\b|_{\mathsf{int}(\d_j^+X(v_\b))}$ is a contact form away from a set of  measure zero. Moreover, 
$$\mp\big[\b\wedge (d\b)^{\frac{2n-j}{2}}\big] \big |_{\d_{j}^\pm X(v_\b)} \geq 0, \text{ and }
\b\wedge (d\b)^{\frac{2n-j}{2}} \big |\big |_{\d_{j+1}X(v_\b)} \equiv 0\; .$$

\item For an odd $j$, $d\b|_{\mathsf{int}(\d_j^+X(v_\b))}$ is a symplectic form away from a set of measure zero. Moreover,  
$$\pm (d\b)^{\frac{2n+1-j}{2}} \big |_{\d_{j}^\pm X(v_\b)} \geq 0, \text{ and }
(d\b)^{\frac{2n+1-j}{2}} \big |\big |_{\d_{j+1}X(v_\b)} \equiv 0.\quad \diamondsuit$$
%\hfill $\diamondsuit$
\end{itemize}
\end{conjecture}

%By induction $j-1 \leadsto j$.... Basis $j=0, 1$.

%Note that, for $j \equiv 0 \mod 2$, the restriction of the $(2n+1-j)$-form $\b \wedge (d\b)^{\frac{2n-j}{2}}$ to the $(2n+1-j)$-dimensional locus $\d_jX(v_\b)$ vanishes along the submanifold $\d_{j+1}X(v_\b) \subset \d_jX(v_\b)$. Indeed, {\bf WORK}

%\begin{corollary} Let $\b$ be a contact form on $(2n+1)$-dimensional compact connected smooth manifold $X$ with boundary, and $v$ its traversing Reeb vector. Then, for each $k \leq n$, the closed form $(d\b)^k \, |_{\d_1^+X(v)}$ defines a nontrivial cohomology class in $H^{2k}(\d_1^+X(v), \d_2X(v); \R)$. 
%\hfill $\diamondsuit$
%\end{corollary}

%\begin{proof} BLA REVISIT !!
%\end{proof}

\bigskip

%{\bf ignore for now !!}

%\begin{lemma} Let $k \neq 2n$ and $\e \in [0, 1/2]$. Consider the product $H =: D^{k+1} \times D^{2n-k} \subset \R^{2n+1}$, defined by the inequalities $\{|x|\leq 1,\, |y| \leq 1\}$, where $x \in \R^{k+1}, \, y \in \R^{2n-k}$,  and its subset $K_\e =: Sh_\e^{k+1} \times D^{2n-k}$, defined by the inequalities $\{1-\e \leq |x|\leq 1,\, |y| \leq 1\}$. Assume that a $1$-form $\b$ on $H$ is such that $\b \wedge (d\b)^n > 0$ on the subspace $K_e$. \smallskip

%Then there is a $1$-parameter smooth deformation $\{\b_t\}_{t \in [0,1]}$ of $\b = \b_0$ such that $\b_t|_{K_{\e/2}} = \b_0|_{K_{\e/2}}$ and $\b_1 \wedge  (d\b_1)^n > 0$ everywhere in $H$.
%\end{lemma}

%\begin{proof} {\bf WORK must be hard ????}

%{\bf Example.} If $n=1, k=1$, then for $\b = f_1\, dx_1 + f_2\, dx_2 + h\, dy$, we get the requirement 

%$$h\Big(\frac{\d f_2}{\d x_1} - \frac{\d f_1}{\d x_2}\Big) + f_2\Big(\frac{\d f_1}{\d y} - \frac{\d h}{\d x_1}\Big) + f_1\Big(\frac{\d h}{\d x_2} - \frac{\d f_2}{\d y}\Big) > 0$$

%\end{proof}

%{\bf END ignore for now !!} \bigskip

\section{How to discriminate between generic and  Reeb vector fields?}

The next few lemmas are dealing with the way a contact form $\b$ and its Reeb vector field $v_\b$ interact with the boundary $\d_1 X$.

\begin{lemma}\label{lem.positive_on_d_+}
Let $\b$ be a contact form on a $(2n+1)$-dimensional manifold $X$ and $v_\b$ its  boundary generic Reeb vector field. \smallskip

Then $(d\b)^n |_{\mathsf{int}(\d_1^+X(v_\b))} > 0$, while $(d\b)^n |_{\mathsf{int}(\d_1^-X(v_\b))} < 0$. Thus, the restriction \hfill\break $d\b|_{\mathsf{int}(\d_1^+X(v_\b))}$ is a symplectic $2$-form.
\end{lemma}

\begin{proof} Since $\b \wedge (d\b)^n$ is the volume form on $X$, by the definition of $\d_1^+X(v_\b)$, we conclude that $v_\b\, \rfloor \, (\b \wedge (d\b)^n)|_{\d^+X(v_\b)} >  0$ on $\mathsf{int}(\d_1^+X(v_\b))$. On the other hand, since $v_\b \,\rfloor \, (d\b)^n =0$,  $$v_\b \,\rfloor \, (\b \wedge (d\b)^n) = (v_\b \,\rfloor \, \b) (d\b)^n  - \b \wedge (v_\b \,\rfloor \, (d\b)^n) = (d\b)^n.$$ Therefore, $(d\b)^n|_{\mathsf{int}(\d_1^+X(v_\b))} >0$.  As a result, $d\b|_{\mathsf{int}(\d_1^+X(v_\b))}$ is a symplectic form.
\end{proof}

%Lemma \ref{lem.positive_on_d_+} leads instantly to the following claims.

\begin{corollary} Let $\b$ be a contact form on $(2n+1)$-dimensional $X$, whose Reeb vector field $v_\b$ is boundary generic. Then $\int_{\d_2X(v_\b)} \b \wedge (d\b)^{n-1} > 0$.
\end{corollary}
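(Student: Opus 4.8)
The plan is to derive the Corollary from Lemma \ref{lem.positive_on_d_+} by applying Stokes' theorem on the compact $2n$-dimensional manifold $\d_1^+X(v_\b)$, whose boundary is precisely the tangency locus $\d_2X(v_\b)$.

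First I would record the boundary structure furnished by boundary genericity: $\d_1^+X(v_\b)$ is a compact $2n$-manifold with $\d\big(\d_1^+X(v_\b)\big) = \d_2X(v_\b)$. Next, since $d\b$ is closed, the smooth $(2n-1)$-form $\b \wedge (d\b)^{n-1}$ satisfies $d\big(\b \wedge (d\b)^{n-1}\big) = (d\b)^n$ on $\d_1^+X(v_\b)$. Orienting $\d_1^+X(v_\b)$ by the orientation of Lemma \ref{lem.positive_on_d_+} (with respect to which $(d\b)^n > 0$ on the interior) and giving $\d_2X(v_\b)$ the induced boundary orientation, Stokes' theorem yields $\int_{\d_2X(v_\b)} \b \wedge (d\b)^{n-1} = \int_{\d_1^+X(v_\b)} (d\b)^n$. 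It then remains to see that the right-hand side is strictly positive: by Lemma \ref{lem.positive_on_d_+} the integrand $(d\b)^n$ is pointwise positive on $\mathsf{int}\big(\d_1^+X(v_\b)\big)$, so the integral is strictly positive as soon as this interior is nonempty, i.e. as soon as $\d_1^+X(v_\b)$ carries positive $(d\b)^n$-volume.

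Establishing this nonemptiness is the one point requiring genuine input, and the main potential obstacle. Here I would use that the Reeb flow preserves the volume form $\b \wedge (d\b)^n$: from $v_\b \rfloor d\b = 0$ and $\b(v_\b) = 1$ one gets $L_{v_\b}\b = 0$ and $L_{v_\b}(d\b) = 0$, hence $L_{v_\b}\big(\b \wedge (d\b)^n\big) = 0$. As in the proof of Lemma \ref{lem.positive_on_d_+}, $v_\b \rfloor \big(\b \wedge (d\b)^n\big) = (d\b)^n$, so Stokes' theorem on $X$ gives $\int_{\d_1X} (d\b)^n = \int_X L_{v_\b}\big(\b \wedge (d\b)^n\big) = 0$. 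Since $(d\b)^n$ is positive on $\mathsf{int}(\d_1^+X(v_\b))$ and negative on $\mathsf{int}(\d_1^-X(v_\b))$ (Lemma \ref{lem.positive_on_d_+}), this vanishing total flux forces both loci to have positive volume; in particular $\mathsf{int}(\d_1^+X(v_\b)) \neq \emptyset$, and the desired strict inequality follows.

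A final bookkeeping task, which I do not expect to be difficult, is to confirm that the boundary orientation on $\d_2X(v_\b)$ produced by Stokes agrees with the orientation implicit in the statement; this is a matter of tracking the outward-normal conventions used to orient $\d_1^+X(v_\b)$ inside $\d_1 X$ and $\d_1 X$ inside $X$, all of which are already fixed in the proof of Lemma \ref{lem.positive_on_d_+}.
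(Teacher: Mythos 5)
Your proof is correct and follows essentially the same route as the paper: the paper's argument is exactly the one-line application of Stokes' theorem, $\int_{\d_2X(v_\b)} \b \wedge (d\b)^{n-1} = \int_{\d_1^+X(v_\b)} (d\b)^n$, combined with the positivity from Lemma \ref{lem.positive_on_d_+}. Your additional verification that $\mathsf{int}(\d_1^+X(v_\b)) \neq \emptyset$ (via $\int_{\d_1X}(d\b)^n = 0$, which also follows directly from exactness of $(d\b)^n$ and closedness of $\d_1X$) is a sound refinement of a point the paper leaves implicit.
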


\begin{proof} Since, by Lemma \ref{lem.positive_on_d_+}, $(d\b|_{\mathsf{int}(\d_1^+X(v_\b))})^n > 0$,  by Stokes' Theorem, $$\int_{\d_2X(v_\b)} \b \wedge (d\b)^{n-1} = \int_{\d_1^+X(v_\b)} (d\b)^n > 0. $$
\end{proof}

Consider a vector field $v$ and a $1$-form $\b$ such that $\b(v) = 1$ and $v\,\rfloor\, d\b = 0$.
If we would know that $\b \wedge (d\b)^n > 0$ on $X$, we would conclude that $\b$ is a contact form and $v$ is its Reeb vector field. However, the property $\b \wedge (d\b)^n > 0$ does not hold automatically. The next proposition spells out some hypotheses that insure this desired property.

\begin{proposition}\label{prop.when_beta_contact} Let $v$ be a traversing boundary generic vector field on a connected compact smooth $(2n+1)$-manifold with boundary. %Let $\xi(v)$ be a $(2n)$-subbundle of the tangent bundle $TX$, transversal to $v$. 
Consider any $1$-form $\b$ such that $\b(v) =1$ and $d\b(v) =0$. (By Lemma \ref{lem.df(v)=1}, 
 such a form $\b$ exists.) %\smallskip
%
%If the bundle $\xi(v)$ does not admit a complex structure, then the restriction of the $2n$-form $d\b^n$ to $\mathsf{int}(\d_1^+X(v))$ cannot be positive.
%
%If each $v$-trajectory $\g$ is transversal to $\d_1 X$ at {\sf some} point of the set $\g \cap \d_1 X$ {\bf PROBLEM}
Assume that $\d_2^-X(v) = \emptyset$.\footnote{Speaking informally, this happens when all the $v$-trajectories that are tangent to $\d_1X$ are tangent ``from the interior of $X$" so that the boundary $\d_1X$ is {\it concave} with respect to the $v$-flow.} \smallskip

Then  $\b$ is a contact form and $v$ is its Reeb vector field
 %$$ \; \b \wedge (d\b)^{n-1} \big |_{\mathsf{int}(\d_2^+X(v))} \geq 0,\; (d\b)^{n-1} \big |_{\mathsf{int}(\d_3^+X(v))} \geq 0,$$ 
%$$\b \wedge (d\b)^{n-2} \big |_{\mathsf{int}(\d_4^+X(v))} \geq 0 \; \ldots ,$$ 
%the three properties
%\begin{itemize}
%\item $\b$ is a contact form, 
%\item $v$ is its Reeb vector field $v_\b$, 
%\item $\xi(v)$ admits a complex/symplectic structure amenable to $d\b$,
%\end{itemize}
%
%are valid, 
if and only if  $(d\b)^n \big |_{\mathsf{int}(\d_1^+X(v))} > 0.$ %\footnote{Thus, if the bundle $\xi(v)$ does not admit a symplectic structure, then the restriction of the $2n$-form $(d\b)^n$ to $\mathsf{int}(\d_1^+X(v))$ must switch signs.}$
 
\end{proposition}

\begin{proof} We notice that $v \,\rfloor \, (d\b) =0$ and $v\,\rfloor \,\b =1$  imply that $\mathcal L_v \b = 0$. Thus, $\b$ is $v$-invariant. %If, in addition, $\b$ is a contact form, then $\b \wedge (d\b)^n > 0$. By \ref , the restriction of $(d\b)^n$ to $\mathsf{int}(\d_1^+X(v))$ must be positive. 

%Since $\b \wedge (d\b)^n$ is the volume form on $X$, by the definition of $\d_1^+X(v_\b)$, we get $v_\b\, \rfloor \, (\b \wedge (d\b)^n)|_{\d^+X(v_\b)} >  0$ in $\mathsf{int}(\d_1^+X(v_\b))$. 
By the same two properties of $\b$, we get 
$$v \,\rfloor \, (\b \wedge (d\b)^n) = (v \,\rfloor \, \b) (d\b)^n  - \b \wedge (v \,\rfloor \, (d\b)^n) = (d\b)^n.$$ 

Hence, if $(d\b)^n |_{\mathsf{int}(\d_1^+X(v))} > 0$, then $\big(v \,\rfloor \, (\b \wedge (d\b)^n)\big)|_{\mathsf{int}(\d_1^+X(v))} > 0$. 

Let $w_1, w_2, \ldots w_{2n}$ be a frame in the tangent space $T_x \d_1^+X(v)$ whose orientation agrees with the orientation of $\d_1^+X(v)$, induced by the preferred orientation of $X$. Then $v \wedge w_1 \wedge w_2 \wedge \ldots \wedge w_{2n}$ is a positive volume.  Thus, $(\b \wedge (d\b)^n)(v \wedge w_1 \wedge w_2 \wedge \ldots \wedge w_{2n}) > 0$ along $\mathsf{int}(\d_1^+X(v))$.  Using that $\b$ is $v$-invariant, we conclude that $\b \wedge (d\b)^n$ is a positive volume form along each $v$-trajectory $\g$ such that $\g \cap \mathsf{int}(\d_1^+X(v)) \neq \emptyset$. Thus, in $X \setminus \d_2^-X(v)$, the form $\b \wedge (d\b)^n$ is a positive.  If $\d_2^-X(v) =\emptyset$ (i.e., the boundary is concave with respect to $v$), we conclude that $\b \wedge (d\b)^n > 0$ everywhere in $X$. 
As a result, $\b$ is a contact form with the Reeb vector field $v_\b = v$. %and the bundle $\xi_\b \approx \xi(v)$ admits a symplectic structure, induced by $d\b|_{\xi(v)}$ (equivalently, a complex structure $J: \xi_\b \to \xi_\b$, consistent with $d\b$). 

Conversely, if $\b$, subject to the properties $\b(v) =1$ and $d\b(v) =0$, is such that $\b \wedge (d\b)^n > 0$ in $X$, then $v$ is the Reeb field for $\b$, and, by Lemma \ref{lem.positive_on_d_+},  $(d\b)^n \big |_{\mathsf{int}(\d_1^+X(v))} > 0$.
\end{proof}
\begin{remark} 
%\noindent {\bf Remark 5.1.} 
\emph{
Proposition \ref{prop.when_beta_contact} suggests that should be a geometric connection between cohomology classes with the values in the groups $H^{\ast+1}(X; \pi_\ast(\mathsf{SO}(2n)/\mathsf{U}(n))$ that obstruct the existence of complex (symplectic) structure on a transversal to $v$ distribution $\xi(v)$ and the geometry of the loci in $\d_1^+X(v)$ where $(d\b)^n |_{\d_1^+X(v)}$ flips sign. Note that this change in sign can be expressed as the \emph{global} property $\int_{{\d_1^+X(v)}} |(d\b)^n|  > \int_{{\d_1^+X(v)}} (d\b)^n$.  %{\bf WORK \cite{EG}}
\hfill $\diamondsuit$
}
\end{remark}
\smallskip

%At the first glance, it seems probable that 
The next simple example testifies that not any \emph{traversing} vector field $v$ on a $(2n+1)$ -dimensional manifold $X$ is a Reeb vector field for some contact form $\b$, even when a transversal to $v$ $(2n)$-dimensional distribution $\xi$ admits a complex/symplectic structure. %However, this conjecture is false, as the next simple example testifies.
\smallskip

\begin{example}\label{ex.NO_Eliash}
%\noindent {\bf Example 5.1.}  
\emph{
Consider the $3$-fold $X = S \times [0, 1]$ where $S$ is an oriented closed surface. Let $v$ be a constant vector on $X$, tangent to the fibers of the projection $S \times [0, 1] \to S$. Then Proposition \ref{prop.when_beta_contact} tells us that such a $v$ is not a Reeb vector field of any contact form $\b$ on $X$. Indeed, any transversal to $v$ plane field $\xi$ is orientable and thus admits a complex/symplectic structure. However, $d\b$ cannot be positive on $\d^+_1X(v) = S \times \{0\}$ since $\int_S d\b =0$ by Stokes' Theorem. 
}
\hfill $\diamondsuit$
\end{example}
\smallskip

Let us generalize  Example \ref{ex.NO_Eliash}.

\begin{definition}\label{def.3._SHELL} Let $v$ be a smooth non-vanishing vector field on a $(2n +1)$-dimensional compact manifold $X$, and let $\b$ be a $1$-form such that $\b(v)=1$ and $v\, \rfloor \, d\b= 0$. 
Let $S \subset X$ be a connected  oriented \emph{closed} smooth manifold of a dimension $2k$, $k \leq n$, transversal to the $v$-flow.  
\smallskip

A  {\sf contact $(2k+1)$-shell} $Sh(S, v, \b)$ is a smooth compact submanifold of $X$, diffeomorphic to the product $S \times [0, 1]$ so that each oriented segment $s \times [0, 1]$, $s \in S$ is mapped diffeomorphically to an oriented segment of a $v$-trajectory contained in $Sh(S, v, \b)$. Moreover, we require that the restriction of $\b \wedge (d\b)^k$ to the shell $Sh(S, v, \b)$ be a positive volume form. 

\hfill $\diamondsuit$ 
\end{definition}

%\begin{definition}\label{def.3._SHELL_OLD} A $2k$-{\sf shell} is the product $S \times [0, 1]$, where $S$ is a connected  oriented \emph{closed} smooth manifold of a dimension $2k$, $k \leq n$. 

%Let $v$ be a smooth non-vanishing vector field on a $(2n +1)$-dimensional compact manifold $X$, and let $\xi$ be a $(2n)$-distribution, transversal to $v$ and admitting a complex/symplectic structure. \smallskip

%We say that $(X, v, \xi)$ {\sf contains a contact $(2k+1)$-shell} if there is a regular embedding $\Psi:  S \times [0, 1] \to X$ that maps each oriented segment $s \times [0, 1]$, $s \in S$, to an oriented segment of a $v$-trajectory and such that a normal $(2n-2k)$-bundle $\nu(\Psi)$ of $\Psi(S \times [0, 1])$ in $X$ is a complex/symplectic subbundle of the $2n$-bundle $\xi |_{\Psi(S \times [0, 1])}$ (viewed a $n$-dimensional complex bundle).
%\hfill $\diamondsuit$ 
%\end{definition}

%Equivalently, $(X, v, \xi)$ contains a contact $(2k+1)$-shell $\Psi:  S \times [0, 1] \to X$ if and only if there is a regular embedding $\psi: S \to X$ such that there exists a symplectic $(2n-2k)$-subbundle $\mu$ of $\xi |_{\psi(S)}$ whose fibers are transversal to $\psi(S)$; i.e.,  $\R_v \oplus \mu$ is a normal bundle of $\psi(S)$ in $X$, where $\R_v$ stands for the line bundle spanned by $v$.

\begin{proposition}\label{prop.NO_shells} 
If $v_\b$ is the Reeb vector field of a contact form $\b$ on a compact $(2n+1)$-dimensional manifold $X$, then $X$ does not contain any contact $(2k+1)$-shells $Sh(S, v_\b, \b)$ for all $k \in [1, n]$. 

In particular, the sets $\d_1^\pm X(v_\b)$ have no \emph{closed} connected components.
\end{proposition}

\begin{proof} Let $\b$ be a contact form whose Reeb field is $v_\b$. %Let $\xi_\b = \ker(\b)$. 
Assume that $X$ contains a contact $(2k+1)$-shell $Sh(S, v_\b, \b)$.  %Thanks to Definition \ref{def.3._SHELL}, the normal bundle $\nu(\Psi)$ of $\Psi(S \times [0, 1])$ is a complex/symplectic subbundle of $\xi_\b|_{\Psi(S \times [0, 1])}$. %framed by vector fields $\nu_1, \ldots, \nu_{2n-2k}$. 

Consider the form $\b \wedge (d\b)^k$ on $Sh(S, v_\b, \b)$. By Definition \ref{def.3._SHELL}, the form $\b \wedge (d\b)^k$ must be a volume form on $Sh(S, v_\b, \b)$. Thus, $\b^\bullet =: \b|_{Sh(S, v_\b, \b)}$ is a contact form on  $Sh(S, v_\b, \b)$ and $v^\bullet_\b =: v_\b|_{Sh(S, v_\b, \b)}$ is the Reeb vector field for $\b^\bullet$. Moreover, $\d_1^+\big(Sh(S, v_\b, \b)\big)(v^\bullet) \approx S \times \{0\}$ and  $\d_1^-\big(Sh(S, v_\b, \b)\big)(v^\bullet) \approx S \times \{1\}$.

On the other hand, Proposition \ref{prop.when_beta_contact} tells us that such $v^\bullet_\b$ cannot be the Reeb vector field of the contact form $\b^\bullet$. Indeed, the form 
 $(d\b^\bullet)^k$, being exact, cannot be positive on the closed manifold $\d^+_1(S \times [0, 1])(v^\bullet_\b) = S \times \{0\}$ since, by Stokes' Theorem, $\int_S (d\b^\bullet)^k =  \int_S\, d\big(\b^\bullet \wedge (d\b^\bullet)^{k-1}\big) = 0$. 
 This contradiction proves the proposition.
\hfill
\end{proof}

\begin{corollary} Let $\b$ be a contact form on a %oriented  %connected 
smooth $(2n+1)$-manifold $X$. Let the Reeb vector field $v_\b$ be transversal to a smooth  oriented hypersurface $Y \subset X$, where $Y$ is an open $(2n)$-manifold.  \smallskip

Then $Y$ does not contain any \emph{closed} smooth  $(2k)$-dimensional,  $k \in [1, n]$, submanifold $S$, such that $d\b|_S$ is symplectic form. %whose normal bundle $\nu(S)$ in $Y$ admits a complex/symplectic structure, amenable to the symplectic form $d\b|_Y$. 

In particular, this claim is valid for $Y = \mathsf{int}(\d_1^+X(v_\b))$.
\end{corollary}

\begin{proof}  Let $U$ be a smooth regular neighborhood of $Y$ in $X$. Consider the $(-v_\b)$ induced projection $Q^\b: U \to Y$. We denote its differential by $Q^\b_\ast$. Since $v_\b$ is transversal to $Y$, the isomorphism $Q^\b_\ast |: \xi_\b |_Y \to T(Y)$ is a symplectomorphism with respect to the  forms $d\b|_{\xi_\b}$ and $d\b|_Y$. %If the normal bundle $\nu(S)$ of $S$ in $Y$ is a symplectic subbundle of $TY$, then $(Q^\b_\ast)^{-1}\big(\nu(S)\big) \cap \xi_\b$ is a symplectic subbundle of $\xi_\b|_{(Q^\b)^{-1}(S)}$. 

Assume that a closed $2k$-dimensional $S \subset Y$ is such that $d\b|_S$ is symplectic form. Then $(d\b)^k |_S > 0$.
It is on the level of definitions to check that $(Q^\b)^{-1}(S)$ is a contact shell $Sh(S, v_\b, \b)$ residing in $U \subset X$. This contradicts to Proposition \ref{prop.NO_shells}. Thus, the corollary follows.
\hfill
\end{proof}

Motivated by these results and feeling a bit adventurous, let us state the following  conjecture.

\begin{conjecture} Let $X$ be a compact smooth $(2n+1)$-dimensional manifold. 
Let $v$ be a non-vanishing (traversing?) vector field on $X$, and $\b$ a  $1$-form %on $X$ 
such that $\b(v)=1$ and $v\, \rfloor \, d\b = 0$. %, such that a normal to $v$ $(2n)$-dimensional distribution $\xi$ admits a complex/symplectic structure. 

Then $v$ is a Reeb vector field of the contact form $\b$ on $X$ if and only if $X$ does not admit  contact $(2k+1)$-shells $Sh(S, v, \b)$ for all $k \in [1, n]$. \hfill $\diamondsuit$
\end{conjecture}

%We denote by $\mathcal V(X)$ the space of all vector fields on $X$ in the $C^\infty$-topology. 

\begin{corollary}%{\bf WORK} 
Let $X$ be a compact smooth $(2n+1)$-dimensional manifold. We denote by $\mathcal W(X)$  the space of all pairs $(v, \b)$,  where a smooth vector field $v$ and a $1$-form $\b$ satisfy the relations $\b(v) =1$ and $v \, \rfloor \, d\b = 0$.  Let  $k \in [1, n]$.\smallskip

Then the subset $\mathcal W_k^{\mathsf{Sh}}(X)$ of the space $\mathcal W(X)$, consisting of pairs $(v, \b)$ that admit contact $(2k+1)$-shells, is open in the $C^\infty$-topology of $\mathcal W(X)$. 
%Therefore, if $\mathcal W_k^{\mathsf{Sh}}(X) \neq \emptyset$, then $\mathcal W(X)$ contains an open nonempty set of pairs $(v, \b)$ such that $\b$ is not a contact form.
\end{corollary}

\begin{proof} Assume that a $(2k+1)$-shell $Sh(S, v, \b) \subset X$ exists. Thus, $\b \wedge (d\b)^k|_{Sh(S, v, \b)} > 0$. If a vector field $v'$ is $C^\infty$-close to $v$, then the shell $Sh(S, v', \b)$ is $C^\infty$-close to the shell $Sh(S, v, \b)$ thanks to the smooth dependence of solutions of ODEs that share the same initial data $S$ on the nonsingular fields transversal to $S$. In particular, if a point $x$ on a $v$-trajectory through $s \in S$ and  a point $x'$ on a $v'$-trajectory through $s \in S$ are sufficiently close in $X$, then the tangent  spaces $T_{x'}(Sh(S, v', \b))$ and $T_x(Sh(S, v, \b))$ are sufficiently close. Thus, if $\b \wedge (d\b)^k|_{T_{x}(Sh(S, v, \b))} > 0$, then $\b \wedge (d\b)^k|_{T_{x'}(Sh(S, v', \b))} > 0$ as well. It  follows that $\b \wedge (d\b)^k|_{T(Sh(S, v', \b))} > 0$ globally for such choice of $v'$. For a fixed $v'$, the property $\b \wedge (d\b)^k|_{T(Sh(S, v', \b))} > 0$ is open with respect to small smooth deformations $\b \leadsto \b'$ of the form $\b$.  Therefore, for a fixed closed $S$, the constraint $\b \wedge (d\b)^k|_{T_{x}(Sh(S, v, \b))} > 0$ defines an open set $\mathcal O(S)$ in the space $\mathcal V(X) \times \Om^1(X)$, where $\mathcal V(X)$ is the space of nonsingular vector fields and $\Om^1(X)$ the space of $1$-forms on $X$. 

As a result, the intersection $\mathcal W_k^{\mathsf{Sh}}(X)$ of $\mathcal O(S)$ with the subspace $\mathcal W(X)$, defined by $\{\b(v) =1, \; v \, \rfloor \, d\b = 0\}$,  must be open in the subspace topology.
\hfill
\end{proof}

Let $Z$ be a compact smooth $(2n+1)$-dimensional manifold and $f: Z \to \R$ a Morse function. Let $\tilde v$ be a $f$-gradient-like vector field on $Z$. We denote by $X$ the complement in $Z$ to a disjoint union of small open $(2n+1)$-balls $\{B_z\}$ that are centered on the critical points $\{z\}$ of $f$ and are convex in the local Euclidean metric $g_z$, adjusted to the local Morse coordinates at $z$.
A similar construction makes sense for any closed $1$-form $\a$ on $Z$ with the Morse type singularities. In their vicinity, $\a = df$, where $f$ has the standard Morse form in the Morse coordinates at each singularity. Recall that a vector field $v$ is of the gradient type for $\a$ on $X = Z \setminus \bigcup_{z \in \mathsf{sing}(\a)} B_z$ if $\a(v) > 0$ on $X$.

\begin{corollary} If $Z$ is a closed odd-dimensional manifold, then the gradient-like vector field $v$ on the manifold $X = Z \setminus \bigcup_{z \in \mathsf{crit}(f)} B_z$ is not a Reeb field of any contact form.
\end{corollary}

\begin{proof} Since any Morse function $f: Z \to \R$ on a closed $Z$ has maxima and minima, the spheres $S_z = \d B_z$ surrounding such critical points give rise to contact $(2n+1)$-shells. By Proposition  \ref{prop.NO_shells}, $v$ cannot be a Reeb vector field. 
\hfill
\end{proof}

As Corollary \ref{cor.Morse-Reeb} testifies, not any traversing vector field $v$ is the Reeb field for some contact form $\b$! Some obstructions for finding such contact forms $\b$ are of the homotopy-theoretical nature, others (see Proposition \ref{prop.NO_shells} below) are more of the differential topology nature.\smallskip

\begin{corollary}\label{cor.Morse-Reeb}
Let $Z$ be a \emph{compact} smooth manifold of an odd dimension that exceeds $7$. Consider a Morse function $f: Z \to \R$ such that $\mathsf{crit}(f) \neq \emptyset$ and its gradient like vector field $v$. 

Then $v$ is not a Reeb vector field of any contact form on  $X = Z \setminus \bigcup_{z \in \mathsf{crit}(f)} B_z.$

Moreover, for any $1$-form $\b$ on $X$ such that $\b(v) = 1$ and $v \,\rfloor \, d\b = 0$, the form $(d\b)^n \big |_{\mathsf{int}(\d_1^+X(v))}$ must flip sign.
\end{corollary}

\begin{proof}
For any $f$-critical point $z$, consider the sphere $S^{2n}_z = \d B_z$.
Let $a \in H^{2n}(S^{2n}_z; \Z)$ be a generator. Let $\tilde K(S^{2n})$ denote the reduced complex $K$-theory of the sphere $S^{2n}$. 

For each complex vector bundle $\eta$ over $S^{2n}$, the $n$-th Chern class $c_n(\eta)$ is a multiple of $(n-1)!\, a$. Moreover, for each $m$ such that $m \equiv 0 \mod (n-1)!$, there is a unique $[\eta] \in \tilde K(S^{2n})$ such that $c_n(\eta) = m a$ (see \cite{Huse}, 9.8 Corollary).  

On the other hand, for any Morse singularity $z$, the Euler class of a $(2n)$-dimensional  distribution $\xi(v)|_{S^{2n}_z}$, tangent to the hypersurfaces of $f$-constant level, can be computed by the formula $\chi(\xi(v)) =   (-1)^{\mathsf{ind}_z(f)}\, 2$, where $\mathsf{ind}_z(f)$ is the Morse index of $z$.  This can be seen by applying the Gaussian map to $v$ along $S^{2n}_z$, while considering the orthogonal to $v$ hyperplane field $v^\perp \approx \xi(v)|_{S^{2n}_z}$. Therefore, if the real $2n$-bundle $\xi(v)$ has a complex structure, then for each $f$-critical point $z$, we must have $(-1)^{\mathsf{ind}_z(f)}\, 2  \equiv 0 \mod (n-1)!$. 
As a result, a necessary condition for this construction to lead to a contact structure on the $(2n+1)$-dimensional $X$ is 
$$(-1)^{\mathsf{ind}_z(f)}\; 2 \equiv 0 \mod (n-1)! \text{ for each } z\in \mathsf{crit}(f).$$
Unfortunately, this arithmetic condition can be satisfied only for $n \leq 3$. Therefore, no gradient vector field $v$ of a Morse function $f: Z \to \R$ is the Reeb vector field of any contact form $\b$ on $X = Z \setminus (\bigcup_{z \in \mathsf{crit}(f)}\; D_z)$ such that $v_\b = v|_X$ in all dimensions $\dim X \geq 9$. \smallskip
 %
 %Example $n=1$: $(-1)^{\mathsf{ind}_y(v)}\, 2 \equiv 0 \mod (0)!= 1 $, \hfill\break 
 %Example $n=2$: $(-1)^{\mathsf{ind}_y(v)}\, 2 \equiv 0 \mod (1)!= 1 $, \hfill\break
 %Example $n=3$: $(-1)^{\mathsf{ind}_y(v)}\, 2 \equiv 0 \mod (2)!= 2 $, \hfill\break
 %Example $n=4$: $(-1)^{\mathsf{ind}_y(v)}\, 2 \neq 0 \mod (3)! = 6 $, \hfill\break
 %Example $n=5$: $(-1)^{\mathsf{ind}_y(v)}\, 2 \neq 0 \mod (4)!= 24 $.
 
 If we identify a small vector $u \in T_zZ$ with the point $exp_{g(z)}(u) \in B_z$ in the Euclidean metric $g(z)$ in the vicinity of $z$, in terms of the Hessian of $f$ at $z$, we get
$$\d_1^+X(v) = \coprod_{z \in \mathsf{crit}(f),\; u \in T_zZ} \{Hess(f)_z(u, u) \geq 0\} \cap \d B_z,$$ and $$\d_2X(v) = \d_2^+X(v) = \coprod_{z \in \mathsf{crit}(f),\; u \in T_zZ} \{Hess(f)_z(u, u) =0 \} \cap \d B_z.$$

Therefore, $v$ is concave in relation to $\d_1X$ (i.e., $\d_2^-X(v) = \emptyset$), and Proposition \ref{prop.when_beta_contact} is applicable. Hence, by the argument above, for any $1$-form $\b$ such that $\b(v) = 1$ and $d\b(v) = 0$, the form $(d\b)^n \big |_{\mathsf{int}(\d_1^+X(v))}$ must flip sign, provided $\dim X \geq 9$. 
\smallskip
\hfill
\end{proof}

%\begin{conjecture}\label{conj.less_than_7} {\bf WORK} Any closed smooth odd-dimensional manifold $Y$, $\dim X \leq  7$, admits a Morse function $f: Y \to \R$ such that  its gradient-like vector field $v$ on $X = Z \setminus (\bigcup_{z \in \mathsf{crit}(f)}\; D_z)$ is the Reeb field of a contact form on $X$. 
%\hfill $\diamondsuit$
%\end{conjecture}

Corollary \ref{cor.Morse-Reeb} admits the following straightforward  generalization. 

\begin{corollary} Let $\tilde v$ be a vector field on an odd-dimensional manifold $Y$ such that the zeros of $\tilde v$ form a finite set $\mathcal Z_{\tilde v}$. Let $U(\mathcal Z_{\tilde v})$ be an open  regular neighborhood of $\mathcal Z_{\tilde v}$ in $Y$. Put $X = Z \setminus U(\mathcal Z_{\tilde v})$ and let $v = \tilde v|_X$. % is traversing and boundary generic. 

Then a necessary condition for $v$ being a Reeb vector field of a contact form $\b$ on $X$ is 
$$(-1)^{\mathsf{ind}_z(v)}\, 2  \equiv\; 0 \mod (n-1)! \text{ for each } z \in \mathcal Z_{\tilde v},$$ 
where $\mathsf{ind}_z(v)$ denotes the local index of the vector field $v$ at $z$.
\hfill $\diamondsuit$
\end{corollary}

Since a connected manifold with boundary admits a handle decomposition without handles of the top dimension $2n+1$, there is no homotopy-theoretical obstruction for extending any {\sf almost contact structure} (see Definition \ref{def.contact}) from the base of each handle to its interior (so that the preconditions for applying the $h$-principle are valid). Therefore, by  Gromov's Theorem \cite{Gr1} (see also its enforcement, the  fundamental Theorem 1.1 of Borman, Eliashberg, and Murphy \cite{BEM}), the cooriented contact structure $\xi$ on $X$ exists. However, we do not know whether any connected oriented manifold with boundary admits a contact form $\b$ whose Reeb vector field is {\it traversing}. The next conjecture suggests the positive answer.

%the appropriate transversal to $v_\b$ and $\hat v_\b$-invariant foliation $\mathcal G$ exists.

\begin{conjecture}\label{th.existence_of_traversing_Reeb} %Any odd-dimensional compact connected oriented smooth manifold $X$ with  boundary admits a cooriented smooth contact structure $\xi$ such that its contact form $\b$ has a %%{traversally generic (see Definition \ref{def.2.5_traversally_generic_fields}), and thus, 
%traversing Reeb vector field $v_\b$. \smallskip
%
%Moreover, any given contact form $\b_0$ on $X$, via an isotopy of $X$, can be converted into a contact form $\b_1$ such that its Reeb vector field $v_{\b_1}$ is traversing.
Let  $X$ be a compact connected oriented smooth manifold  with  boundary, $\dim(X)= 2n+1$. Let $\b$ be a contact form on $X$ such that its Reeb vector field $v_\b$ is transversal to some $\hat v_\b$-invariant codimension one foliation $\mathcal G$ on an open $(2n+1)$-manifold $\hat X$ properly containing $X$.  Here the vector field $\hat v_\b$ on $\hat X$ is such that $\hat v_\b |_X = v_\b$.\smallskip

Then there is a smooth  isotopy  $\{\phi_t: X \to X\}_{t \in [0,1]}$ such that  the Reeb vector field $v_{\b_1}$ of the contact form $\b_1 = \phi_1^\ast(\b)$ is traversing (see Definition \ref{def.2.1_traversing}) on $\phi_1(X) \subset X$. 
\hfill $\diamondsuit$
\end{conjecture}

% \begin{remark}
%\emph{
% \hfill $\diamondsuit$
%}
%\end{remark}
%\smallskip

\section{Holography: recovering contact geometry from  the boundary data}

We start with a lemma that is not related to Contact Geometry.

\begin{lemma}\label{lem.df(v)=1} Let $v$ be a smooth traversing vector field and $\b$ a $1$-form on a compact  connected manifold $X$ such that $\b(v) = 1$.  Then $v$ admits a Lyapunov function $f: X \to \R$ such that $df(v) = 1$. 
In particular,  letting $\b' = df$, we get an exact Lyapunov $1$-form $\b'$ such that 
$\b'(v) = 1$ and $v \, \rfloor\, d\b' = 0$.

Moreover, for any $v$-trajectory $\g$ and any closed interval $[a, b] \subset \g$, we have $\int_{[a, b]} df = \int_{[a, b]} \b$.
\end{lemma}

\begin{proof} %{\bf WORK} 
Let $\dim X = d$. By definition of a traversing vector field, there exists a Lyapunov function $h: X \to \R$ such that $dh(v) > 0$. We embed properly $X$ in a $d$-dimensional open manifold $\hat X$ so that $v$ extends to $\hat v$ and $h$ to $\hat h$, subject to the constraint $d\hat h(\hat v) > 0$ in $\hat X$. By shrinking $\hat X \supset X$ if necessary, we  may also extend $\b$ to $\hat \b$ in $\hat X$ so that $\hat\b(\hat v) =1$ in the vicinity of $X$ in $\hat X$.

For a traversing (and thus nonsingular) $v$-flow on a compact manifold $X$, the trajectory space $\mathcal T(v)$ is a compact Hausdorff space; in fact, for a traversally generic $v$, $\mathcal T(v)$ is $CW$-complex \cite{K1}, \cite{K3}. 

Using that each $v$-trajectory $\g$ extends to a closed segment of the unique $\hat v$-trajectory $\hat\g$, we  construct  a tubular cover $\{U_\a \subset \hat X\}_\a$ of $X$ such that there are diffeomorphisms $\phi_\a: D^{d-1} \times D^1 \to U_\a$ that map each segment $a \times D^1$, $a \in D^{d-1}$, to a segment of a $\hat v$-trajectory. Moreover, $\phi_\a(D^{d-1} \times \d D^1) \subset \hat X \setminus X$. Using compactness of $X$, we choose a finite subcover of the cover $\{U_\a\}_\a$.  Abusing notations, we still denote it  $\{U_\a\}_\a$. 

Let $g_\a: U_\a \to \R$ be defined by the formula 
\begin{eqnarray}\label{eq.adjusting_f}
g_\a(\phi_\a(a \times t))  = \int_{\phi_\a([a \times 0),\; (a \times t])} \hat\b \; =:  \int_{\phi_\a(a \times 0)}^{\phi_\a(a \times t)} \hat\b, 
\end{eqnarray}
where $a \in D^{d-1}$. Note that $dg_\a |_{\g} = \hat\b |_{\g}$, where $\g = \{\phi_\a(a \times t)\}_t$.
 Since $\hat\b(\hat v) =1$, we get $dg_\a(\hat v) =1$. The finite cover $\{U_\a\}_\a$ gives rise to a finite cover $\{V_\a\}_\a$ of the compact trajectory space $\mathcal T(v)$. 
 
 A function $\psi: \mathcal T(v) \to \R$ is defined to be smooth, if its pull-back $\Gamma^\ast(\psi): X \to \R$ is smooth. Thanks to compactness of the Hausdorff space $\mathcal T(v)$, we may use a smooth partition of unity $\{\psi_\a: V_\a \to [0,1]\}_\a$ to introduce a smooth function $f =: \sum_\a \psi_\a \cdot g_\a.$ By its construction, $\psi_\a$ has the property $d\psi_\a(v) =0$. Therefore, $$df(v) = \sum_\a (\psi_\a \cdot dg_\a(v) + g_\a \cdot d\psi_\a(v)) = \sum_\a \psi_\a =1.$$
 By a similar formula, any closed interval $[a, b] \subset \g$, we get 
  \begin{eqnarray}\label{eq.var_of_beta}
  \int_{[a,b]}df = \sum_\a \Big(\int_{[a,b]}\psi_\a \cdot dg_\a +  \int_{[a,b]} g_\a \cdot d\psi_\a\Big) = \sum_\a \Big(\psi_\a\int_{[a,b]} dg_\a\Big)\nonumber \\ = \sum_\a \psi_\a \cdot \int_{[a,b]} \b = \int_{[a,b]} \b.
  \end{eqnarray}
\end{proof}

\begin{corollary}\label{cor.Lyap=1} Any traversing vector field $v$ admits a Lyapunov function $g$ such that $dg(v) = 1$.
\end{corollary}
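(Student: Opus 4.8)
The plan is to deduce the corollary immediately from Lemma \ref{lem.df(v)=1} by manufacturing an auxiliary $1$-form that pairs to $1$ against $v$. Beyond requiring $v$ to be traversing, the only hypothesis of Lemma \ref{lem.df(v)=1} is the existence of a $1$-form $\b$ with $\b(v) = 1$; hence the entire task reduces to producing such a $\b$, after which the lemma supplies the Lyapunov function with $dg(v)=1$ for free.

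First I would invoke the fact, recorded just after Definition \ref{def.2.1_traversing}, that a traversing vector field never vanishes on $X$. Fixing any Riemannian metric $\langle\,,\,\rangle$ on $X$ (which exists because $X$ is a compact smooth manifold), I would set
\[
\b \;=_{\mathsf{def}}\; \frac{\langle v,\, \cdot\,\rangle}{\langle v, v\rangle}.
\]
Since $v$ is nowhere zero, the denominator $\langle v, v\rangle$ is a strictly positive smooth function, so $\b$ is a well-defined smooth $1$-form on $X$, and by construction $\b(v) = \langle v, v\rangle/\langle v, v\rangle = 1$ at every point of $X$.

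With this $\b$ in hand, I would apply Lemma \ref{lem.df(v)=1} to the pair $(v, \b)$. The lemma then yields a Lyapunov function $g: X \to \R$ with $dg(v) = 1$, which is exactly the assertion of the corollary. (The name $g$ here is the function produced by the lemma, not the auxiliary metric, for which I have used $\langle\,,\,\rangle$ to avoid a notational clash.)

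I expect no genuine obstacle here: the substance of the statement lives entirely in Lemma \ref{lem.df(v)=1}, whose proof does the real work of stitching the local primitives of $\b$ along trajectories into a single global function by means of a partition of unity on the trajectory space $\mathcal T(v)$. The only thing that needs checking at this level is the non-vanishing of $v$, which is automatic for traversing fields, so the construction of $\b$ above is unobstructed.
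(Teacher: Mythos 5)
Your proof is correct and follows essentially the same route as the paper: both reduce the corollary to Lemma \ref{lem.df(v)=1} by exhibiting a $1$-form $\b$ with $\b(v)=1$, which is possible because a traversing field is nowhere zero. The only (immaterial) difference is the choice of $\b$ — the paper normalizes the differential of an existing Lyapunov function, taking $\b = df/df(v)$, while you take the metric dual $\langle v,\cdot\rangle/\langle v,v\rangle$; either satisfies the lemma's hypothesis.
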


\begin{proof} If $f: X \to \R$ is a Lyapunov function for $v$, then the $1$-form $\b = \frac{df}{df(v)}$ satisfies the hypotheses of Lemma \ref{lem.df(v)=1}. By the lemma, there exists a Lypunov function $g: X \to \R$ such that $dg(v) = 1$.
\end{proof}

For a traversing vector field $v$ on $X$, every $v$-trajectory $\g \subset X$ has a neighborhood $\hat U(\g) \subset \hat X$ and a $t$-parametric family of diffeomorphisms $\{\phi^t: \hat U(\g) \to \hat U(\g)\}_{t \in [-\e,\e]}$ (where $\e >0$ depends on $\g$) such that $\phi^0 = \mathsf{id}$, and $\frac{d}{dt}\phi^t(a)\big |_{t=0} = v(a)$ for all $a \in \hat U(\g) \cap X$.  \smallskip

\begin{definition}\label{def.flow_generated_diff} Let $v$ be a vector field on $X$.
We say that a contact form $\b$ is $v$-{\sf invariant} if  $(\phi^t)^\ast(\b) = \b$ in $\hat U(\g) \cap X \cap (\phi^t)(X)$ for all sufficiently small $t$. %{\bf WORK} 
Equivalently, if the directional derivative $\mathcal L_v\b = 0$. %{\bf WORK}

Similarly, the contact structure $\xi$ is $v$-{\sf invariant}, if $(\phi^t)^\ast(\b) = \l_t \cdot \b$, where $\l_t: X \to \R_+$ is a smooth positive function (i.e., $(\phi^t)^\ast(\xi) = \xi$ as distributions).
\hfill $\diamondsuit$ 
\end{definition}

\begin{definition}\label{def.R-equiv} %\hfill \break
$\bullet$ We call two contact forms $\b_1$ and $\b_2$ on $X$ {\sf Reeb-equivalent} ($\mathcal R$-{\sf equivalent} for short), if they share the same Reeb vector field $v$. Similarly, we call contact structures $\xi_1$ and $\xi_2$ $\mathcal R$-{\sf equivalent} if they are are generated by $\mathcal R$-equivalent contact forms.
 \smallskip
 
$\bullet$ We call contact forms $\b_1$ and $\b_2$  {\sf Reeb-conformally-equivalent} ($\mathcal{RC}$-{\sf equivalent} for short), if their Reeb vector fields $v_1$ and $v_2$ are proportional: i.e., $v_2 = \l\cdot v_1$, where $\l: X \to \R_+$ is a smooth positive function. Similarly, we call contact structures $\xi_1$ and $\xi_2$ $\mathcal{RC}$-{\sf equivalent} if they are generated by $\mathcal{RC}$-equivalent contact forms.
 \hfill $\diamondsuit$

\end{definition}
% \smallskip

 \begin{remark} 
 \emph{
 Evidently, $\mathcal{RC}$-equivalent contact forms $\b_1$ and $\b_2$ \emph{share the dynamics} of their Reeb flows. In particular, if $v_{\b_1}$ is traversing, or boundary generic, or admits a Lyapunov $1$-form/function, then so is/does  $v_{\b_2}$.
 }
 \hfill $\diamondsuit$
 \end{remark}

\begin{lemma}\label{lem.beta_equiv_beta1}    Let $\b$ be a contact form on a %compact 
smooth manifold $X$ and $v_\b$ its Reeb vector field. Consider a $1$-form $\b_1 = \b + d \eta$, where the smooth function $\eta: X \to \R$ is such that  $d\eta(v_\b) > -1$.\footnote{i.e., $\eta$ is not ``negatively very  steep" along the Reeb flow trajectories.} Such functions $\eta$ form an open convex set in $C^\infty(X)$. 

Then the form $\b_1$ is a contact form on $X$.  Moreover,  $\b_1$ is 
 $\mathcal{RC}$-equivalent to $\b$.
 \end{lemma}
 
 \begin{proof} Since $\b_1 = \b + d \eta$, we get $$\b_1\wedge (d\b_1)^n = \b_1 \wedge (d\b)^n = \b \wedge (d\b)^n + d\eta \wedge (d\b)^n.$$ 

We claim that the $(2n+1)$-form $\b_1\wedge (d\b_1)^n$ is positive on $X$, provided $d\eta(v_\b) > -1$. Indeed, the $(2n+1)$-form $d\eta \wedge (d\b)^n$ is proportional to $(2n+1)$-form $\b \wedge (d\b)^n$ with some functional coefficient $h: X \to \R$. Thus $\b_1\wedge (d\b_1)^n > 0$ is equivalent to the inequality $1+ h > 0$ on $X$. 

To detect $h$, it suffices to compare the contractions of the two forms,  $d\eta\wedge (d\b)^n = h (\b \wedge (d\b)^n)$ and $\b \wedge (d\b)^n$, with a non-vanishing Reeb vector field $v_\b$. Recall that $v_\b \rfloor\b =1$ and $v_\b \rfloor d\b \equiv 0$.
Using these properties of $v_\b$, we get $$v_\b \rfloor (\b \wedge (d\b)^n) = (d\b)^n, \text{ while } v_\b \rfloor (d\eta \wedge (d\b)^n) = d\eta(v_\b) (d\b)^n = h\;(d\b)^n.$$ Thus, when $d\eta(v_\b) > -1$, $v_\b \rfloor(\b_1\wedge (d\b_1)^n)$ is positively proportional to $(d\b)^n$ with the functional coefficient $h$. Therefore, $\b_1$ is a contact form, provided $d\eta(v_\b) > -1$.
\smallskip

Put $v_{\b_1} =: (1 + d\eta(v_\b))^{-1} v_\b$. Let us check that $v_{\b_1}$ is the Reeb vector field for the form $\b_1$. Indeed, $\b_1(v_{\b_1}) = \b_1( (1 + d\eta(v_\b))^{-1} v_\b) = (\b +d\eta) \big((1 + d\eta(v_\b))^{-1} v_\b\big) =1$ since $\b(v_\b) = 1$, and $d\b_1(v_{\b_1}) = d\b( (1 + d\eta(v_\b))^{-1} v_\b) = (1 + d\eta(v_\b))^{-1} d\b(v_\b) \equiv 0.$ 
% OLD PROOF By definition, $d\eta(v_\b) = 0, v_\b \, \rfloor\, d\b = 0$, and for dimensional reasons $d\eta \wedge (d\b)^n |_{\ker \b} = 0$. Thus, $d\eta \wedge (d\b)^n = 0$. Therefore, $\b_1 \wedge (d\b_1)^n = (\b + d\eta)\wedge (d\b)^n = \b \wedge (d\b)^n > 0$. Also, $\b_1(v_\b) = \b(v_\b) + d\eta(v_\b) =1$ and, since $d\b_1= d\b$, we get  $v_\b \, \rfloor\, d\b_1 = 0$. Hence, $v_{\b_1} = v_\b$.
Thus, $\b_1$ and $\b$ are $\mathcal{RC}$-equivalent. As a result, if $v_\b$ admits a Lyapunov function/form, then so does $v_{\b_1}$.\hfill
\end{proof}

%Combining Lemma \ref{lem.beta_equiv_beta1} with Lemma \ref{lem.4.5_XYZ} delivers the following corollary.

%\begin{corollary} Given a contact form $\b$ and smooth functions $\l$,  $\eta$ such that $\mathcal L_{v_\b}\l =0$ and $d\eta(v_\b) > -1$, the  form $\b_1 = \l(\b + d\eta)$ is a contact form.  Moreover, $\b_1$ and $\b$ are $\mathcal{RC}$-equivalent.
%\hfill $\diamondsuit$
%\end{corollary}
 %\smallskip
 
\begin{lemma}\label{lem.4.4_X} For a contact form $\b$ on $X$, any smooth diffeomorphism $\Phi: X \to X$, that preserves the $\mathcal{RC}$-equivalence class of the contact form $\b$, %$\Phi: (X, \xi) \to (X, \xi)$ 
preserves the Morse stratifications $\{\d_jX(v_\b)\}_j, \{\d_j^\pm X(v_\b)\}_j$ (see (\ref{eq.2.1})) of the boundary. 
%{\bf WORK ????} More generally, if $\b_1, \b_2$ are $\mathcal{RC}$-equivalent contact forms and a smooth diffeomorphism $\Phi: X \to X$ is such that $\Phi^\ast(\b_2) = \b_1$, then $\Phi$ takes the Morse stratification of $v_{\b_1}$ to the Morse stratification of $v_{\b_2}$. %In other words, the Morse stratification of the Reeb vector field $v_\b$ depends only on the $\mathcal{RC}$-equivalence class of $\b$.
\end{lemma}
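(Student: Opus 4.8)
The plan is to reduce the statement to the single geometric fact that $\Phi$ carries the \emph{oriented} foliation $\mathcal F(v_\b)$ to itself, and then to observe that every stratum in (\ref{eq.2.1}) is cut out by the oriented foliation together with the boundary $\d_1 X$ alone. First I would record the naturality of the Reeb field: for any diffeomorphism $\Phi: X \to X$ the Reeb field of the pulled-back contact form $\Phi^\ast\b$ is the pulled-back vector field $(\Phi^{-1})_\ast v_\b$, since $(\Phi^\ast\b)\big((\Phi^{-1})_\ast v_\b\big) = \b(v_\b) = 1$ and $(\Phi^{-1})_\ast v_\b \,\rfloor\, d(\Phi^\ast\b) = \Phi^\ast\big(v_\b \,\rfloor\, d\b\big) = 0$. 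The hypothesis that $\Phi$ preserves the $\mathcal{RC}$-equivalence class of $\b$ (Definition \ref{def.R-equiv}) then says precisely that $(\Phi^{-1})_\ast v_\b = \l\, v_\b$ for some smooth $\l: X \to \R_+$. Pushing this forward by $\Phi_\ast$ yields $\Phi_\ast v_\b = \mu\, v_\b$ with $\mu = 1/(\l \circ \Phi^{-1}) > 0$; that is, $\Phi$ is an \emph{orientation-preserving} self-equivalence of the oriented $1$-foliation $\mathcal F(v_\b)$, sending each $v_\b$-trajectory to a $v_\b$-trajectory with its orientation intact.

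Second, since $\Phi$ is a diffeomorphism of the manifold-with-boundary $X$, it restricts to a diffeomorphism of $\d_1 X = \d X$, and its differential sends vectors pointing inward of $X$ (resp.\ outward, resp.\ tangent to $\d X$) to vectors of the same type. Combined with $\Phi_\ast v_\b = \mu\, v_\b$ and $\mu > 0$, this already gives $\Phi(\d_1^\pm X(v_\b)) = \d_1^\pm X(v_\b)$ and $\Phi(\d_2 X(v_\b)) = \d_2 X(v_\b)$. I would then argue by induction on $j$, using the inductive description of Definition \ref{def.2.2_boundary-generic}, that $\Phi(\d_j X(v_\b)) = \d_j X(v_\b)$ and $\Phi(\d_j^\pm X(v_\b)) = \d_j^\pm X(v_\b)$ for all $j$, the base being $\d_0^+X = X$, $\d_1 X = \d X$. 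For the step, assuming $\Phi$ preserves $\d_{j-1}X$, $\d_j X$ and $\d_j^\pm X$, the differential $d\Phi$ induces at each $x \in \d_j X$ an isomorphism of the normal quotient line $T_j^\nu = T(\d_{j-1}X)/T(\d_j X)$ over $x$ onto the one over $\Phi(x)$, carrying the class of $v_\b(x)$ to the positive multiple $\mu\, v_\b(\Phi(x))$. Hence the zero locus of this section, which is exactly $\d_{j+1}X(v_\b)$, is preserved; and because $\mu > 0$ and $\Phi(\d_j^+X) = \d_j^+X$, the ``points-inward-of-$\d_j^+X$'' condition cutting out $\d_{j+1}^+X(v_\b)$ is preserved as well. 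This closes the induction.

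The only delicate point, and the main obstacle, is the orientation bookkeeping: the positivity of the conformal factor $\mu$ is exactly what prevents $\Phi$ from interchanging $\d_j^+X$ with $\d_j^-X$ and thereby scrambling the two filtrations of (\ref{eq.2.1}). This is where the $\mathcal{RC}$-equivalence hypothesis is genuinely used, as opposed to mere preservation of the unoriented line field $\ker(d\b)$: had $\Phi$ preserved only the unoriented Reeb direction (allowing a sign flip), the inward/outward decorations could be swapped. Everything else is the routine observation, already implicit in the $\mathcal{RC}$-invariance $C_{\l v} = C_v$ of the causality map noted in Section~3, that the tangency strata depend only on the oriented conformal class of the field.
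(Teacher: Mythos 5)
Your proof is correct and follows essentially the same route as the paper's: both reduce the claim to the fact that $\Phi_\ast v_\b$ is a positive multiple of $v_\b$ together with the observation that the Morse stratification depends only on the positive conformal class of the field. The only cosmetic difference is that you verify this conformal invariance by induction directly from the quotient-line-bundle description in Definition \ref{def.2.2_boundary-generic}, whereas the paper invokes the equivalent iterated-Lie-derivative characterization of the strata from \cite{K1}.
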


\begin{proof} Recall that $\b$ uniquely defines the Reeb field $v_\b$, and the conformal class of $v_\b$ determines the $v_\b$-generated Morse stratification of the boundary \cite{K1}. 
\smallskip

If $\Phi^\ast(\b) = \b_1$ and $v_{\b_1} = \l \cdot v_{\b}$ for some smooth function $\l: X \to \R_{>0}$ (i.e., $\b_1, \b$ are $\mathcal{RC}$-equivalent), then the two Morse stratifications coincide, since they are defined in terms of applying iteratively the Lie derivatives $\mathcal L_{v_{\b}}$ and $\mathcal L_{v_{\b_1}} =: \mathcal L_{\l \cdot v_{\b}} = \l \cdot \mathcal L_{v_{\b}}$ to the same auxiliary function $h: \hat X \to \R$ that determines the boundary $\d_1 X$ \cite{K1}. By definition, such a function $h$ has the three properties:
\begin{eqnarray}\label{eq.function for dX} 
\bullet\; & 0 \text{ is a regular value of } h, \nonumber \\
\bullet\; & h^{-1}(0) = \d_1 X \nonumber \\ 
\bullet\; & h^{-1}((-\infty, 0]) = X.
\end{eqnarray}
For $\l >0$, by an inductive argument in $j$ (organized in an ``upper-triangular" pattern), the loci $$\d_jX(v_\b) = \big\{x \in \d X\big | \; h(x) = 0,\, (\mathcal L_{v_\b} h)(x) = 0,\, \ldots ,\, (\mathcal L^{j-1}_{v_\b} h)(x) = 0\big\}$$
$$\d_jX(\l \cdot v_\b) = \big\{x \in \d X\big | \; h(x) = 0,\, (\mathcal L_{\l \cdot v_\b} h)(x) = 0,\, \ldots ,\, (\mathcal L^{j-1}_{\l \cdot v_\b} h)(x) = 0\big\}$$
coincide.
\end{proof}
 \smallskip
 
 \begin{definition}\label{def.property_A} 
We say that a boundary generic and traversing vector field $v$ possesses  {\sf  Property $\mathsf A$}, if each $v$-trajectory $\g$ either is transversal to $\d_1 X$ at \emph{some} point of the set $\g \cap \d_1 X$, or $\g \cap \d_1 X$ is a singleton $x$ and $\g$ is quadratically tangent to $\d_1 X$ at $x$. \hfill $\diamondsuit$
\end{definition}
 \smallskip

We are ready to formulate the main result of this section. It reflects the scheme in (\ref{3_Extension Holo_A}).

\begin{theorem}{\bf(Conjugate Holography for Contact Forms and their Reeb Fields)}\label{th.main} \smallskip

Let $X_1, X_2$ be compact connected oriented smooth $(2n+1)$-dimensional manifolds with boundaries, equipped with contact forms $\b_1, \b_2$. We assume that their Reeb vector fields $v_1 =: v_{\b_1},\, v_2=: v_{\b_2}$ are traversing and boundary generic. We denote by $f_1^\bullet, f_2^\bullet$ the Lyapunov functions such that $df_1^\bullet(v_1) = 1 = df_2^\bullet(v_2)$ (their existence is guarantied by Lemma \ref{lem.df(v)=1}).
%Consider two traversing boundary generic vector fields $v_1, v_2$ on them and a pair of contact structures $\xi_1, \xi_2$ such that $v_i$ ($i=1,2$) is the Reeb vector field for a $\xi_i$-generating contact  $1$-form $\b_i$ on $X_i$. %Assume that  $(d\b_i)^n |_{\mathsf{int}(\d^+X_i(v_i))} > 0$, while $(d\b_i)^n |_{\mathsf{int}(\d^-X_i(v_i))} < 0$. 
In addition, we assume that  $v_2$ possesses Property $\mathsf A$ from Definition \ref{def.property_A}.\footnote{Note that the property $\d_3X_2(v_2) = \emptyset$ implies Property $\mathsf A$ \cite{K1}, \cite{K4}.} 
%We denote by $\xi_1, \xi_2$ the contact structures, induced by $\b_1, \b_2$.
\smallskip

Assume that a smooth orientation-preserving diffeomorphism $\Phi^\d: \d_1X_1 \to \d_1X_2$ commutes with the two causality maps:
$$C_{v_2} \circ \Phi^\d = \Phi^\d \circ C_{v_1}.$$ 

$\bullet$ Then $\Phi^\d$ extends to a smooth orientation-preserving diffeomorphism $\Phi: X_1 \to X_2$ such that $\Phi$ maps the oriented foliation $\mathcal F(v_1)$ to the oriented foliation $\mathcal F(v_2)$, and the pull-back contact form $\b^\dagger_1 = \Phi^\ast(\b_2)$  is $\mathcal{RC}$-equivalent (in the sense of Definition \ref{def.R-equiv}) to  $\b_1$.\smallskip

$\bullet$  If, in addition, $(\Phi^\d)^\ast(f_2^\bullet)^\d = (f_1^\bullet)^\d$  
and $(\Phi^\d)^\ast(\b_2 |_{\d_1X_2}) = \b_1 |_{\d_1X_1},$
 then  there exists a smooth diffeomorphism $\Phi: X_1 \to X_2$ which extends $\Phi^\d$ and such that $(\Phi)^\ast(\b_2) = \b_1$. 
\end{theorem}

\begin{proof} Our arguments rely heavily on the Holography Theorem from \cite{K4}. Assuming Property {\sf A}, it claims the existence of an orientation-preserving diffeomorphism $\Phi: X_1 \to X_2$ that extends $\Phi^\d$ and maps the $v_1$-oriented foliation $\mathcal F(v_1)$ to the $v_2$-oriented foliation $\mathcal F(v_2)$. In the absence of  Property {\sf A} we only may claim that the extension $\Phi$ is a {\it homeomorphism} whose restriction to the trajectories is a diffeomorphism.

Let us outline the spirit of the Holography Theorem proof. The reader interested in the technicalities may consult with \cite{K4}. 

First, using that $v_2$ is traversing, we construct a Lyapunov function $f_2: X_2 \to \R$ for it.  %By Lemma \ref{lem.df(v)=1}, using the contact form $\b_2$, 
%Without changing the conformal class of $v_2$ (and thus the causality map $C_{v_2}$), we may assume that $df_2(v_2) = 1$. 
Then we pull-back, via the diffeomorphism $\Phi^\d$, the restriction $f_2^\d =: f_2|_{\d_1 X_2}$. Since  $\Phi^\d$ commutes with the two causality maps, the pull back $f_1^\d =: (\Phi^\d)^\ast(f^\d_2)$ has the property $f_1^\d(y) > f_1^\d(x)$ for any pair $y \succ x$ on the same $v_1$-trajectory, the order of points being defined by the $v_1$-flow. Equivalently, we get $f_1^\d(C_{v_1}(x)) > f_1^\d(x)$ for any $x \in \d_1^+X(v_1)$ such that $C_{v_1}(x) \neq x$. As the key step, we prove in \cite{K4} that such $f_1^\d$ extends to a smooth function $f_1: X_1 \to \R$ that has the property $df_1(v_1) > 0$. Hence, $f_1$ is a Lyapunov function for $v_1$. %\smallskip

Let $i= 1, 2$. Recall that each causality map $C_{v_i}$  allows to view the $v_i$-trajectory space $\mathcal T(v_i)$ as the quotient space $(\d_1X_i)\big/ \{C_{v_i}(x) \sim x\}$, where $x \in \d_1^+X_i(v_i)$; the topology in $\mathcal T(v_i)$ is defined as the quotient topology. Using that $\Phi^\d$ commutes with the causality maps $C_{v_1}$ and $C_{v_2}$, we conclude that $\Phi^\d$ induces a homeomorphism $\Phi^\mathcal T: \mathcal T(v_1) \to \mathcal T(v_2)$ of the trajectory spaces, which preserves their natural stratifications.%\smallskip

For a traversing vector field $v_i$, the manifold $X_i$ carries two mutually transversal  foliations: the oriented $1$-dimensional $\mathcal F(v_i)$, generated by the $v_i$-flow (or rather by $\hat v_i$ to insure that the leaves of $\mathcal F(\hat v_i)$ are non-singular in the ambient $\hat X$), and the foliation $\mathcal G(f_i)$, generated by the constant level hypersurfaces of the Lyapunov function $f_i$ (or rather by constant level hypersurfaces of $\hat f_i: \hat X_i \to \R$ to insure that the leaves are non-singular in the ambient $\hat X$). Note that the leaves of $\mathcal G(f_i)$ may be disconnected, while the leaves of $\mathcal F(v_i)$, the $v_i$-trajectories, are connected.  The two foliations, $\mathcal F(v_i)$ and $\mathcal G(f_i)$, may serve as a ``coordinate grid" on $X_i$: every point $x \in X_i$ belongs to a \emph{unique} pair of leaves $\g_x \subset \mathcal F(v_i)$ and $L_x =: f_i^{-1}(f_i(x)) \subset \mathcal G(f_i)$. 

Conversely, using the traversing nature of the vector field $v_i$, any pair $(y,\,  t)$, where $y \in \g_x \cap \d_1X_i$ and $t \in [f_i^\d(\g_x \cap \d_1X_i)] \subset \R$ determines a \emph{unique} point $x \in X_i$. Here $[f_i^\d(\g_x \cap \d_1X_i)]$ denotes the minimal closed interval that contains the finite set $f_i^\d(\g_x \cap \d_1X_i)$. Note that some pairs of leaves $L$ and $\g$ may have an empty intersection, and some components of leaves $L$ may have an empty intersection with the boundary $\d_1X_i$. However, if $f_i(\d_1X_i) = f_i(X_i)$ as sets, then $f_i^{-1}(c) \cap \d_1X_i \neq \emptyset$ for any $c \in f_i(X_i)$. In particular, if $\d_1X_i$ is connected, then $f_i(\d_1X_i) = f_i(X_i)$ and $f_i^{-1}(c) \cap \d_1X_i \neq \emptyset$ for any $c \in f_i(X_i)$

In fact, using that $f_i$ is a Lyapunov function, the hyprsurface $L = f_i^{-1}(c)$ intersects with a trajectory $\g$ if and only if $c \in [f_i^\d(\g \cap \d_1X_i)]$. 

Consider a pair of points $y, z \in \d_1X_i$. Since the two smooth leaves, $\hat\g_y$ and $\hat f_i^{-1}(f_i(z))$, depend smoothly on the points $y, z$ and are transversal in $\hat X_i$, their intersection point in $X_i$ depends smoothly on $(y, z) \in (\d_1X_i) \times (\d_1X_i)$, as long as $f_i^\d(z) \in [f_i^\d(\g_y \cap \d_1X_i)]$. %, or even as long as $\hat f_i(\hat \g_z) \in f_i^\d([\g_y \cap \d_1X_i])$.
Note that pairs $(y, z)$ with the property $f_i^\d(z) \in f_i^\d(\g_y \cap \d_1X_i)$ give rise to the intersection points that belong to $\d_1X_i$.

%{\bf WRONG} Therefore, the knowledge of  traces of the foliations $\mathcal F(v_i)$ and $\mathcal G(f_i)$ on $\d_1X_i$, together with the trace $f_i^\d: \d_1X_i \to \R$ of the Lyapunov function, is sufficient for reconstructing the bulk $X_i$ and the two foliations $\mathcal F(v_i)$ and $\mathcal G(f_i)$ on it, up to a smooth homeomorphism (a smooth diffeomorphism when Property A is available {\bf ????} Convex envelops ????) which is the identity map on $\d_1X_i$ (see Theorem \ref{cor.main_reconstruction}).
%\smallskip

Now we are ready to extend the diffeomorphism $\Phi^\d$ to a homeomorphism $\Phi: X_1 \to X_2$. 
In the process, following (\ref{3_Extension Holo_A}), \emph{we assume  implicitly the existence of the foliations $\mathcal F(v_i)$ and $\mathcal G(f_i)$ and of the Lyapunov functions $f_i$ on $X_i$, $i= 1, 2$}. 

Take any $x \in X_1$. It belongs to a unique pair of leaves $L_x  \in \mathcal G(f_1)$ and $\g_x \in \mathcal F(v_1)$. We define $\Phi(x) = x' \in X_2$, where $x'$ is the unique point that belongs to the intersection of $f_2^{-1}(f_1(x)) \in \mathcal G(f_2)$ and the $v_2$-trajectory $\g' = \Gamma_2^{-1}(\Phi^\mathcal T(\g_x))$. %= \g_{\Phi^\d(y)}$ for any $y \in \g_x \cap \d_1X_1$. CHECK
By its construction, $\Phi |_{\d_1X_1} = \Phi^\d$. Therefore, $\Phi$ induces the same homeomorphism $\Phi^{\mathcal T}: \mathcal T(v_1) \to \mathcal T(v_2)$ as $\Phi^\d$ does. 

The leaf-hypersurface $\hat f_2^{-1}(f_1(x))$ depends smoothly on $x$, but the leaf-trajectory $\hat \g' = \Gamma_2^{-1}(\Phi^\mathcal T(\hat \g_x))$ may not!
 Although the homeomorphism $\Phi$ is a diffeomorphism along the $v_1$-trajectories, it is not clear that it is a diffeomorphism on $X_1$ (a priori, $\Phi$ is just a H\"{o}lder map with a H\"{o}lder exponent $\a = 1/m$, where $m$ is the maximal tangency order of $\g$'s to $\d_1X$). Presently, for proving that $\Phi$ is a diffeomorphism, we need  Property {\sf A} \cite{K4}. Assuming its validity, we use the transversality of $\g_x$ \emph{somewhere} to $\d_1X$ to claim the smooth dependence of the trajectory $\Gamma_2^{-1}(\Phi^\mathcal T(\hat \g_x))$ on $x$. Since the smooth foliations $\mathcal F(\hat v_i)$ and $\mathcal G(\hat f_i)$ are transversal, it follows that $x' = \Phi(x)$ depends smoothly on $x$. 
 Conjecturally, Property {\sf A} is unnecessary for establishing that $\Phi$ is a diffeomorphism.

It remains to sort out what happens to the contact forms under this diffeomorphism $\Phi$ or its modifications. 

Since $\mathcal L_{v_i}\b_i = 0$, the contact form $\b_i$ is $v_i$-invariant, and thus is determined by its boundary data (section) $\b_i ||_{\d X_i}: \d X_i \to T^\ast(X_i)|_{ \d X_i}$, %$\b_i^\d: \d X_i \to T^\ast(X_i)|_{ \d X_i}$, 
 \emph{provided that the $v_i$-flow is known} (as in the scheme (\ref{3_Extension Holo_A})).   
%\smallskip

 We observe that $f_1 = \Phi^\ast(f_2)$ by the very construction of $\Phi$. Let $v_1^\dagger =: \Phi_\ast^{-1}(v_2)$. Since $\Phi$ maps $\mathcal F(v_1)$ to $\mathcal F(v_2)$, we conclude that $v_1^\dagger$ must be positively-proportional to $v_1$. For the vector field $v_1^\dagger$,  we get $df_1(v_1^\dagger) = \Phi^\ast(df_2)(v_1^\dagger) =  df_2(v_2) \;  =1$.  At the same time, $\b_1(v_1) =1$.
%\smallskip

 Consider the $1$-form $\b_1^\dagger =: \Phi^\ast(\b_2)$ and denote by $\xi_1^\dagger$ its kernel. 
Using that $\Phi$ preserves the orientations, %$\b_2(v_2) = 1 > 0$, and $(d\b_2)^n |_{\mathsf{int}(\d^+X_i(v_i))} > 0$, 
we see that $\Phi^\ast(\b_2) \wedge \Phi^\ast(d\b_2)^n = \Phi^\ast(\b_2 \wedge (d\b_2)^n) > 0$ in $X_1$. Therefore, the hyperplane distribution $\xi_1^\dagger$ is a cooriented contact structure with the contact form $\b_1^\dagger $. 

%%%%%%%%%%%%%%%%%%%%%%%%%%%%%%%%%%%
%%%CORRECTION %%%
  %as in the proof of Corollary \ref{cor.deformation_leads_traversing}, 
In fact, $v_1^\dagger = \Phi^{-1}_\ast(v_2)$ is the Reeb vector field for $\b_1^\dagger$. Indeed, by naturality, $\Phi^\ast(\b_2)(v_1^\dagger) = \b_2(v_2) =1$ and $\Phi^\ast(d\b_2)(v_1^\dagger \wedge \sim) = d\b_2(v_2 \wedge \sim) =0$. Thus, $\b_1^\dagger$ is $v_1^\dagger$-invariant. Since $v_1^\dagger = \l \cdot v_1$, where $\l: X_1 \to \R_+$ is a positive smooth function,  %We conclude that $\b_1^\dagger$ is $(\l \cdot v_1)$-invariant.
$\b_1$ and $\b_1^\dagger$ are $\mathcal{RC}$-equivalent contact forms, and  $\xi_1$ and $\xi_1^\dagger$ are $\mathcal{RC}$-equivalent contact structures on $X_1$ (see Definition \ref{def.R-equiv}). Equivalently, $\xi_2^\dagger =: \Phi_\ast(\xi_1)$ is $\mathcal{RC}$-equivalent to $\xi_2$. This proves the first assertion of the theorem.
 %\smallskip
 
 Now we turn to the second assertion. The special Lypunov functions $f_1^\bullet, f_2^\bullet$ are essential for constructing the desired extension $\Phi$ of $\Phi^\d$.

We can build the extension $\Phi$ of $\Phi^\d$ as before, using the map $\Phi^{\mathcal T}: \mathcal T(v_1) \to \mathcal T(v_2)$ (induced by $\Phi^\d$) and the restrictions $(f_1^\bullet)^\d = (\Phi^\d)^\ast((f_2^\bullet)^\d)$ and $(f_2^\bullet)^\d$ of the Lyapunov functions $f_1^\bullet$ and $f_2^\bullet$ to the corresponding boundaries. By the very construction of $\Phi$, we get $\Phi^\ast(f_2^\bullet) = f_1^\bullet$. Since  $df_i^\bullet(v_i) = 1$ and $\Phi$ maps $v_1$-trajectories to $v_2$-trajectories, it follows that $\Phi_\ast(v_1) = v_2$.
By the hypotheses, $(\Phi^\d)^\ast(\b_2|_{\d_1X_2}) = \b_1|_{\d_1X_1}$ and, since $\b_1(v_1) = 1= \b_2(v_2)$, we conclude that $(\Phi^\d)^\ast(\b_2||_{\d_1X_2}) = \b_1||_{\d_1X_1}$, except, perhaps, along the locus $\d_2X_2$ where $v_2$ is tangent to $\d_1X_2$. However, since $\Phi^\d$ is a smooth diffeomorphism, by continuity, we get  that $(\Phi^\d)^\ast(\b_2||_{\d_1X_2}) = \b_1||_{\d_1X_1}$ without exception.  Thus, using that $\b_i$ is $v_i$-invariant,  we get that $(\Phi)^\ast(\b_2) = \b_1$.
\end{proof}
Note that the construction of the extension $\Phi$ (from the first assertion of Theorem \ref{th.main}) is implicit and far from being canonical. For example, it depends on the choice of extension of $f_1^\d =: (\Phi^\d)^\ast(f_2^\d)$ to a smooth function $f_1: X_1 \to \R$, which is strictly monotone along the $v_1$-trajectories. 

However, as the Corollary \ref{cor.main_reconstruction}  testifies, the uniqueness (rigidity) of the extension $\Phi$ may be achieved, if we assume the \emph{full a priori knowledge} of the manifolds $X_i$, equipped with the foliation grids $\mathcal F(v_i), \mathcal G(f_i)$ and the Lyapunov functions $f_i$ or $f_i^\bullet$.  
Assuming Property $\mathsf A$, this corollary, an instant implication of Theorem \ref{th.main}, claims that  the \emph{smooth topological type} of the triple $\{X,\, \mathcal F(v_\b),\, \mathcal G(f^\bullet)\}$ and of the contact form $\b$ may be reconstructed from the appropriate boundary-confined data. \smallskip
%%%%
\begin{corollary}{\bf(Reeb Flow Holography)}\label{cor.main_reconstruction} 

Let $X$ be a compact connected smooth $(2n+1)$-dimensional manifold with boundary, equipped with a contact form $\b$. We assume that the Reeb vector field  $v = v_\b$ is traversing,  boundary generic, and possesses  Property $\mathsf A$.  By Lemma \ref{lem.df(v)=1}, using $\b$, we construct a Lyapunov function $f^\bullet: X \to \R$ such that $df^\bullet(v) = 1$.
%is $v$-invariant, 
%and that $(d\b)^n |_{\mathsf{int}(\d^+X(v))} > 0$, while $(d\b)^n |_{\mathsf{int}(\d^-X(v))} < 0$.
\smallskip

Then the following boundary-confined data: 
\begin{itemize}
\item the causality map $C_v: \d_1^+X(v) \to \d_1^-X(v)$,
\item  the restriction $(f^\bullet)^\d: \d_1 X \to \R$ of the Lyapunov function $f^\bullet: X \to \R$, %{\bf WORK}
\item the restriction $\b^\d: \d_1X \to T^\ast X|_{\d_1X}$ of the contact form $\b$, %viewed as a section of the bundle $T^\ast X|_{\d_1X} \to \d_1X$,% the section $\xi^\d$ (better, the restriction $\b^\d$ of the contact form $\b$)
\end{itemize}
are sufficient for a reconstruction of the triple $(X, f^\bullet, \b)$ (and thus, of the Reeb vector field $v_\b$), %$(X, \mathcal F(v), \mathcal G(f), \xi)$
up to  diffeomorphisms $\Phi: X \to X$ which are the identity map on $\d_1 X$. % and whose differentials are the identity maps on the bundle $TX|_{\d_1 X} \to \d_1 X$.
\end{corollary}

\begin{proof} %{\bf WORK} 
Assume that we have two sets of structures, $\{X,\, \mathcal F(v_{\b_1}),\, \mathcal G(f^\bullet_1),\b_1\}$ and $\{X,\, \mathcal F(v_{\b_2}),\,\hfill \break \mathcal G(f^\bullet_2),\b_2\}$, which agree along $\d_1X$. 
By Theorem \ref{th.main}, being applied to $\Phi^\d = \mathsf{id_{\d_1X}}$,  the following data 
\begin{eqnarray}\label{eq.boundary_data} 
%\{\d_1X,\; \mathcal F(v_\b) \cap \d_1X,\; \; (f^\bullet)^\d,\; \b^\d\} =: \nonumber \\ 
\{\d_1X,\; \mathcal F(v_{\b_1}) \cap \d_1X,\; \; (f^\bullet_1)^\d,\; \b_1^\d\} = \{\d_1X,\; \mathcal F(v_{\b_2}) \cap \d_1X,\; \; (f^\bullet_2)^\d,\; \b^\d_2\}
\end{eqnarray}
along the boundary $\d_1X$ allow an extension of $\Phi^\d$ to a diffeomorphism $\Phi: X \to X$ which maps $\{X,\, \mathcal F(v_{\b_1}),\, \mathcal G(f^\bullet_1),\b_1\}$ to $\{X,\, \mathcal F(v_{\b_2}),\, \mathcal G(f^\bullet_2),\b_2\}$.  
Therefore, we have reconstructed the triple $(X,\, f^\bullet,\, \b)$, up to a diffeomorphism $\Phi$, which is the identity on  $\d_1X$. Note that the reconstruction of the contact form $\b$ implies the reconstruction of the Reeb field $v_\b$ and of the contact structure $\xi_\b$. 
% $(X, \mathcal F(v), f, \xi^\dagger)$, where $\xi^\dagger$ denotes the $\mathcal{RC}$-equivalent class of contact structure $\xi = \xi(\b)$. 
%
%Each leaf-trajectory $\g \in \mathcal F(v_\b)$ carries a unique vector field $v$, subject to the constraint $df(v) =1$. By Lemma \ref{lem.df(v)=1}, this choice of $v$ is also characterized by the property that the variation of the function $df(v): \g \to \R$ over each subinterval $\g'$ of $\g$ is equal to $\int_{\g'}\b$. %WORK 
%Since $v_\b$ is a Reeb vector field of a contact form $\b$, we conclude that $\b$ is $v_\b$-invariant, and therefore, it is determined by its restriction $\b^\d$ to the boundary $\d X$, provided we managed to reconstruct the $v_\b$-flow on $X$ from the boundary data (\ref{eq.boundary_data}), which we did. 
\hfill 
\end{proof}

\subsection{Boundary data of scattering maps $C_{v_\b}$ and the group-theoretical sections of $\b$-contactomorphisms}
\hfill\break
Although the construction of an extension $\Phi: X \to X$ of a given diffeomorphism $\Phi^\d: \d_1X \to \d_1X$, subject to the hypotheses of Corollary \ref{cor.main_reconstruction}, is not canonical, we will argue that  fixing one such extension $\Phi_\star$ makes it possible to extend any other diffeomorphism  $\tilde\Phi^\d: \d_1 X \to \d_1 X$, subject to the hypotheses of Corollary \ref{cor.main_reconstruction}, in a \emph{rigid way}, so that our reconstruction works well for \emph{families} of such diffeomorphisms $\tilde\Phi^\d$. 
\smallskip

We assume now that the quadruple $(X, \mathcal F(v_\b),  f^\bullet, \b)$ is \emph{known and fixed}. We follow the scheme  in (\ref{3_Extension Holo_A}). 
%\smallskip
Then, given \emph{any}  diffeomorphism $\Phi^\d: \d_1X \to \d_1X$ that respects the boundary data (\ref{eq.boundary_data}), we can extend it \emph{canonically} to a diffeomorphism $\Phi: X \to X$ that keeps the triple $\{\mathcal F(v), f^\bullet, \b\}$ invariant. Indeed, any point $x \in X$ is determined by the unique pair $\big(\g_x \cap \d_1X,\; f^\bullet(x)\big)$, subject to the requirement $f^\bullet(x) \in [f^\bullet(\g_x \cap \d_1X)]$. % Recall that these subsets are the traces of leaves of the foliations $\mathcal F(v)$ and $\mathcal G(f)$ on $\d_1 X$. 
Now put $\Phi(x) = x'$, where the point $x' \in X$ is uniquely defined by the pair 
$$\Big(\Phi^\d(\g_x \cap \d_1X),\; \Phi^\d\big((f^\bullet)^{-1}(f^\bullet(x)\big)\; \cap\, \d_1X) \Big).$$
By the properties of $\Phi^\d$, the latter pair is of the form 
$$\big(\g_{x'} \cap \d_1X,\; \big((f^\bullet)^{-1}(f^\bullet(x')\big) \cap \d_1X) \big),$$
where $f^\bullet(x') \in f^\bullet([\g_{x'} \cap \d_1X])$. 

 As in Theorem \ref{th.main}, assuming  Property $\mathsf A$, we get that $\Phi$ is a smooth diffeomorphism.%{\bf ????}
\smallskip

Since $\Phi^\d$ preserves the boundary data $\{(f^\bullet)^\d, \b^\d\}$ and the contact form $\b$ is $v_\b$-invariant, we get that $\Phi$ must preserve both $\b$ and its Reeb vector field $v_\b$. %{\bf ???? WORK} 
Thus, the extension $\Phi$ is a contactomorphism.
\smallskip

Now, consider the topological groups $\mathsf{Diff}^\infty_+(X)$ and $\mathsf{Diff}^\infty_+(\d_1 X)$ of smooth orientation-preserving diffeomorphisms and the natural restriction homomorphism $\mathcal R: \mathsf{Diff}^\infty_+(X) \to \mathsf{Diff}^\infty_+(\d_1 X).$ Similarly, we may consider the groups  $^\circ\mathsf{Diff}^\infty_+(X)$ and $^\circ\mathsf{Diff}^\infty_+(\d_1 X)$  that are the connected components of  $\mathsf{Diff}^\infty_+(X)$ and $\mathsf{Diff}^\infty_+(\d_1 X)$ that contain the identity.

Recall that the famous Cerf Theorem \cite{Cerf} claims that any orientation-preserving diffeomorphism of $S^3$ is isotopic to the identity (i.e., $\mathsf{Diff}^\infty_+(S^3) \approx\; ^\circ\mathsf{Diff}^\infty_+(S^3)$), and thus, extends to a diffeomorphism of the $4$-ball $D^4$. Therefore, the homomorphism $\mathcal R: \mathsf{Diff}^\infty_+(D^4) \to \mathsf{Diff}^\infty_+(S^3)$ is onto. 

However, remarkably, for any smooth compact connected manifold $X$ with boundary, the restriction map $\mathcal R:\; ^\circ\mathsf{Diff}^\infty_+(X) \to \;^\circ\mathsf{Diff}^\infty_+(\d_1 X)$ does not admit a group-theoretical 
section (\cite{ChMa}, Corollary 1.8)!\smallskip

%{\bf WORK} 
Let $\mathsf{Diff}^\infty_+(X; \b, f^\bullet)$ denote the subgroup of the group $\mathsf{Diff}^\infty_+(X)$ of smooth orientation-preserving diffeomorphisms,  whose elements preserve the contact form $\b$ and %the oriented %$1$-dimensional foliation $\mathcal F(v)$, 
the Lyapunov function $f^\bullet: X \to \R$ such that $df^\bullet(v_\b) = 1$.%(perhaps, just the singular foliation $\mathcal G(f^\d)$ ?) 
%and the contact structure $\xi$ (form $\b$).

Let $\mathsf{Diff}^\infty_+(\d_1 X;\, C_{v_\b}, \b^\d,  (f^\bullet)^\d)$ denote the subgroup of $\mathsf{Diff}^\infty_+(\d_1 X)$ whose elements commute with the causality map $C_{v_\b}: \d_1^+X(v_\b) \to \d_1^-X(v_\b)$, preserve the restrictions to the boundary $(f^\bullet)^\d: \d_1 X \to \R$ of the Lyapunov function $f^\bullet$ and the restriction $\b^\d: \d_1X \to T^\ast(X) |_{\d_1X}$ of $\b: X \to T^\ast(X)$. %(the contact form $\b^\d$ ??).
\smallskip

In contrast with the fact that $\mathcal R: \mathsf{Diff}^\infty_+(X) \to \mathsf{Diff}^\infty_+(\d_1 X)$ does not admit a group-theoretical section \cite{ChMa}, we get the following claim.

\begin{theorem}\label{th.canonical_ext}  Under the hypotheses of Corollary \ref{cor.main_reconstruction}, %Theorem \ref{th.main}, 
the restriction homomorphism \hfill \break  $\mathcal R: \mathsf{Diff}^\infty_+(X) \to \mathsf{Diff}^\infty_+(\d_1 X)$ admits a group-theoretical continuous section $\s$ over the subgroup $\mathsf{Diff}^\infty_+(\d_1 X;\, C_{v_\b}, \b^\d,  (f^\bullet)^\d) \subset  \mathsf{Diff}^\infty_+(\d_1 X)$. 

Moreover, the image $\s(\mathsf{Diff}^\infty_+(\d_1 X;\, C_{v_\b}, \b^\d,  (f^\bullet)^\d))$ belongs to the subgroup $\mathsf{Diff}^\infty_+(X; \b, f^\bullet)$ of $\mathsf{Diff}^\infty_+(X)$.
\end{theorem}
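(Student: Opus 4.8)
The plan is to take for $\s$ the \emph{canonical} extension construction described in the paragraphs immediately preceding the theorem, and to show that, once the grid $(\mathcal F(v_\b), \mathcal G(f^\bullet))$ and the special Lyapunov function $f^\bullet$ (with $df^\bullet(v_\b)=1$) are fixed once and for all, this construction is both functorial and continuous. First I would fix the quadruple $(X, \mathcal F(v_\b), f^\bullet, \b)$. For each $\Phi^\d \in \mathsf{Diff}^\infty_+(\d_1 X;\, C_{v_\b}, \b^\d, (f^\bullet)^\d)$ the hypotheses of Theorem \ref{th.main} (applied with $X_1 = X_2 = X$, $\b_1 = \b_2 = \b$, $f_1^\bullet = f_2^\bullet = f^\bullet$) are satisfied: $\Phi^\d$ commutes with $C_{v_\b}$ and pulls $(f^\bullet)^\d$ and $\b^\d$ back to themselves. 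I then set $\s(\Phi^\d) := \Phi$, the canonical extension sending the point $x$ with coordinates $(\g_x \cap \d_1 X,\, f^\bullet(x))$ to the point determined by the pair $\big(\Phi^\d(\g_x \cap \d_1 X),\, \Phi^\d((f^\bullet)^{-1}(f^\bullet(x)) \cap \d_1 X)\big)$. By the second assertion of Theorem \ref{th.main} (invoking Property $\mathsf A$ to secure smoothness), $\s(\Phi^\d)$ is a smooth orientation-preserving diffeomorphism with $(\s(\Phi^\d))^\ast \b = \b$ and $(\s(\Phi^\d))^\ast f^\bullet = f^\bullet$, so $\s(\Phi^\d) \in \mathsf{Diff}^\infty_+(X;\, \b, f^\bullet)$; and by construction $\s(\Phi^\d)|_{\d_1 X} = \Phi^\d$, i.e. $\mathcal R \circ \s = \mathsf{id}$.

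The main step is to verify that $\s$ is a group homomorphism. To this end I would express the extension intrinsically through the trajectory space: each $x \in X$ carries coordinates $(\Gamma(x),\, f^\bullet(x)) \in \mathcal T(v_\b) \times \R$, legitimate because $f^\bullet$ is strictly increasing along $v_\b$-trajectories, and $\Phi^\d$ induces a stratification-preserving homeomorphism $\Phi^{\mathcal T}: \mathcal T(v_\b) \to \mathcal T(v_\b)$ under the identification $\mathcal T(v_\b) = (\d_1 X)\big/\{C_{v_\b}(x) \sim x\}$. Since $\Phi^\d$ preserves $(f^\bullet)^\d$, the extension acts in these coordinates simply as $(\tau, t) \mapsto (\Phi^{\mathcal T}(\tau),\, t)$, leaving the Lyapunov value $t$ fixed. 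The assignment $\Phi^\d \mapsto \Phi^{\mathcal T}$ is manifestly functorial — a boundary diffeomorphism commuting with $C_{v_\b}$ descends to the quotient, and $(\Phi_1^\d \circ \Phi_2^\d)^{\mathcal T} = \Phi_1^{\mathcal T} \circ \Phi_2^{\mathcal T}$ — so composing the extensions, $(\tau, t) \mapsto (\Phi_2^{\mathcal T}(\tau),\, t) \mapsto (\Phi_1^{\mathcal T}\Phi_2^{\mathcal T}(\tau),\, t)$, reproduces the extension attached to $\Phi_1^\d \circ \Phi_2^\d$. Hence $\s(\Phi_1^\d \circ \Phi_2^\d) = \s(\Phi_1^\d) \circ \s(\Phi_2^\d)$, giving the homomorphism property.

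Finally, for continuity of $\s$ in the $C^\infty$-topologies I would argue that $\Phi(x)$ is the transverse intersection of the smooth leaf $(f^\bullet)^{-1}(f^\bullet(x)) \in \mathcal G(f^\bullet)$ with the trajectory $\Gamma^{-1}(\Phi^{\mathcal T}(\Gamma(x)))$, and that (under Property $\mathsf A$) this intersection depends smoothly on $x$ and continuously on $\Phi^\d$, exactly as in the smooth-dependence analysis carried out in the proof of Theorem \ref{th.main}. The main obstacle — and the very reason the unrestricted map $\mathcal R$ admits no section by \cite{ChMa} — is \emph{canonicity}: the extension in Theorem \ref{th.main} involved auxiliary choices (such as the extension of $f_1^\d$ off $\d_1 X$) and was therefore not functorial. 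The crux here is that fixing the grid $(\mathcal F(v_\b), \mathcal G(f^\bullet))$ together with $f^\bullet$ removes all such choices, so that the extension is determined by $\Phi^{\mathcal T}$ alone; functoriality of the $\mathcal T$-construction then promotes the pointwise reconstruction of Corollary \ref{cor.main_reconstruction} into a genuine group-theoretical section, and the remaining technical burden is to track the smooth dependence carefully enough that $\s$ is continuous rather than merely set-theoretic.
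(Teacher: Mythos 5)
Your proposal is correct and follows essentially the same route as the paper: the paper's proof of this theorem simply points back to the canonical-extension construction in the preceding paragraphs (fix the quadruple $(X,\mathcal F(v_\b), f^\bullet,\b)$, extend $\Phi^\d$ by matching trajectories via $\Phi^{\mathcal T}$ and preserving the $f^\bullet$-level, invoke Property $\mathsf A$ for smoothness), which is exactly what you do. Your explicit verification of the homomorphism property in the coordinates $(\Gamma(x), f^\bullet(x))$ is a welcome elaboration of a step the paper leaves implicit.
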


\begin{proof} Follows instantly from the previous arguments and the proof of Theorem \ref{th.main}. 
\end{proof}
%\smallskip

\section{Non-squeezing of $2n$-symplectic volumes, Morse wrinkles of Reeb fields, and holography}

Let us introduce now some new numerical invariants of contact forms on manifolds with boundary. 

\begin{definition}\label{def.Reeb_diam}
Let $v_\b$ be a traversing Reeb vector field for a contact $1$-form $\b$ on $X$. 
 
The {\sf Reeb diameter} of $(X, \b)$ is defined by $$\mathsf{diam}_\mathcal R(\b) \;=:\;\sup_{\g \subset X} \Big\{\int_\g \b \Big\},$$ where $\g$ runs over all  $v_\b$-trajectories.  \hfill $\diamondsuit$
\end{definition}

%Pull-backs of contact structures under finite covering maps ..... !!

In what follows, we consider the immersions $\Psi: X \to Y$ of compact manifolds $X, Y$ of equal dimensions (in such case, the notions of an immersion and of a submersion coincide). The restrictions of an immersing map $\Psi$ to the boundary $\d X$ is an immersion in the interior of $Y$. One may think of $\Psi$ as the restriction to $X$ of an immersion $\hat\Psi: \hat X \to \hat Y$, where $\hat X$ is an open manifold that contains $X$, and $\hat Y$ is an open manifold that contains $Y$.

\begin{lemma} Consider an orientation-preserving immersion $\Psi: X \to Y$ of two compact equidimentional manifolds. Let $\b_Y$ be a contact form on $Y$ whose Reeb vector field $v_Y$ is traversing. We assume that the hypersurface $\Psi(\d_1X)$ is boundary generic relative to $v_Y$. \smallskip

Consider the contact form $\b_X =: \Psi^\ast(\b_Y)$ on $X$. Its Reeb vector field $v_X$ is traversing with the help of the Lyapunov function $f_X = \Psi^\ast(f_Y)$, where $df_Y(v_Y) > 0$. 
Moreover, the Reeb diameters satisfy the inequality
$$\mathsf{diam}_\mathcal R(\b_X) \leq \mathsf{diam}_\mathcal R(\b_Y).$$

If an immersion $\Psi: X \to Y$ is onto (i.e., $\Psi$ is a covering map with the base $Y$), then 
$$\mathsf{diam}_\mathcal R(\b_X) =  \mathsf{diam}_\mathcal R(\b_Y).$$
\end{lemma}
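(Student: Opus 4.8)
The plan is to first exploit that a submersion between equidimensional manifolds has everywhere-invertible differential, so $\Psi$ is an orientation-preserving local diffeomorphism. Consequently $\b_X \wedge (d\b_X)^n = \Psi^\ast\big(\b_Y \wedge (d\b_Y)^n\big)$ is the pullback of a positive volume form along an orientation-preserving local diffeomorphism, hence positive, so $\b_X$ is a contact form. I would then define $v_X$ to be the unique field with $d\Psi(v_X) = v_Y$ (well defined and non-vanishing since $d\Psi$ is invertible and $v_Y \neq 0$) and verify the two Reeb conditions directly: $\b_X(v_X) = \b_Y(d\Psi\, v_X) = \b_Y(v_Y) = 1$, and for any tangent vector $\zeta$ one has $(v_X \,\rfloor\, d\b_X)(\zeta) = d\b_Y(d\Psi\, v_X,\, d\Psi\, \zeta) = (v_Y \,\rfloor\, d\b_Y)(d\Psi\, \zeta) = 0$, so $v_X = v_{\b_X}$. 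Finally $f_X = \Psi^\ast(f_Y)$ satisfies $df_X(v_X) = df_Y(v_Y) > 0$, so it is a Lyapunov function; being non-vanishing and of gradient type, $v_X$ is traversing. Here the hypothesis that $\Psi(\d_1X)$ is boundary generic relative to $v_Y$ transfers, via the local diffeomorphism, to the statement that $v_X$ is boundary generic, which is what makes $\mathsf{diam}_\mathcal R(\b_X)$ meaningful.

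Next I would establish $\mathsf{diam}_\mathcal R(\b_X) \le \mathsf{diam}_\mathcal R(\b_Y)$. Since $d\Psi(v_X) = v_Y$, the image $\Psi(\g_X)$ of any $v_X$-trajectory is a connected integral curve of $v_Y$, hence lies in a single $v_Y$-trajectory $\g_Y$. Because $f_X = f_Y \circ \Psi$ is strictly increasing along $\g_X$, the restriction $\Psi|_{\g_X}$ is injective, so $\Psi(\g_X)$ is a genuine oriented subarc of $\g_Y$. Naturality of integration gives $\int_{\g_X}\b_X = \int_{\Psi(\g_X)}\b_Y$, and since $\b_Y(v_Y) = 1 > 0$ the form $\b_Y$ is positive in the flow direction, so $\int_{\Psi(\g_X)}\b_Y \le \int_{\g_Y}\b_Y \le \mathsf{diam}_\mathcal R(\b_Y)$. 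Taking the supremum over $\g_X$ yields the inequality.

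For the covering case I would prove the reverse inequality by lifting trajectories. A convenient device is to fix a boundary-defining function $h_Y$ for $Y$ as in (\ref{eq.function for dX}) and set $h_X = \Psi^\ast(h_Y)$; since a covering satisfies $\Psi^{-1}(\d_1Y) = \d_1X$ and is a submersion, $h_X$ is again a boundary-defining function for $X$. Given a $v_Y$-trajectory $\g_Y\colon [0,1] \to Y$ and a preimage $x_0$ of its initial point, path lifting produces a unique lift $\tilde\g\colon [0,1] \to X$ through $x_0$, which is an integral curve of $v_X$ because it projects to $\g_Y$. The relation $h_X(\tilde\g(s)) = h_Y(\g_Y(s))$ shows that $\tilde\g$ stays in $X$ and meets $\d_1X$ exactly at the parameters where $\g_Y$ meets $\d_1Y$, so $\tilde\g$ is a complete $v_X$-trajectory projecting onto all of $\g_Y$. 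Hence $\int_{\tilde\g}\b_X = \int_{\g_Y}\b_Y$, giving $\mathsf{diam}_\mathcal R(\b_X) \ge \int_{\g_Y}\b_Y$ for every $\g_Y$, and therefore equality.

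The main obstacle is precisely the completeness of the lifted trajectory: I must rule out that $\tilde\g$ terminates at a point of $\d_1X$ whose image is only an interior tangential contact of $\g_Y$ with $\d_1Y$, and conversely that $\tilde\g$ could be prolonged past an endpoint. Choosing $h_X = \Psi^\ast(h_Y)$ is what synchronizes the two boundary-hitting patterns and dissolves this issue; without a compatible choice one would instead have to argue stratum by stratum that $\Psi$ carries the Morse stratification $\{\d_j^\pm X(v_X)\}$ onto $\{\d_j^\pm Y(v_Y)\}$, which is exactly what the boundary-genericity hypothesis on $\Psi(\d_1X)$ supplies.
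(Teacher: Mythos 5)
Your proposal is correct and follows essentially the same route as the paper: push each $v_X$-trajectory forward to a subarc of a $v_Y$-trajectory (equivalently, decompose $\Psi^{-1}(\g_Y)$ into arcs) to get $\int_{\g_X}\b_X=\int_{\Psi(\g_X)}\b_Y\le\int_{\g_Y}\b_Y$, and use path lifting in the covering case to get equality. The only difference is that you spell out the preliminary claims (contactness of $\Psi^\ast\b_Y$, identification of $v_X=(d\Psi)^{-1}v_Y$ as the Reeb field, and the Lyapunov property of $\Psi^\ast f_Y$), which the paper's proof takes for granted.
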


\begin{proof} Since $\Psi$ is submersion and the image $\Psi(\d_1X)$ is boundary generic relative to $v_Y$, 
for any $v_Y$-trajectory $\g$ (which is a closed interval or a singleton) the set $\Psi^{-1}(\g)$ is a disjoint union of finitely many closed intervals $[\tilde\g]$, some of which may be singletons. Moreover, $\Psi: [\tilde\g] \hookrightarrow \g$ is a diffeomorphism. Thus, $\int_{[\tilde\g]} \b_X = \int_{[\tilde\g]} \Psi^\ast (\b_Y) = \int_{\Psi([\tilde\g])} \b_Y \leq \int_\g \b_Y$.  When $\Psi$ is a covering map, we get that $\Psi: \tilde\g \to \g$ is a diffeomorphism. In such a case, $\int_{\tilde\g} \b_X = \int_\g \b_Y$. Since any $v_X$-trajectory $\tilde\g$ is obtained from some $v_Y$-trajectory $\g$ as described above, the claims about the Reeb diameters  follow. %{\bf WORK}
\end{proof}

\begin{definition}\label{def.vol_of_T(beta)} 
For a traversing Reeb field, we introduce the {\sf volume of the space of Reeb trajectories} $\mathcal T(v_\b)$ by the formula
\begin{eqnarray}\label{eq.vol_of_T(beta)}
vol_{(d\b)^n}(\mathcal T(v_\b)) =:\; \int_{\d_1^+X(v)} (d\b)^n.
\end{eqnarray}
\hfill $\diamondsuit$
\end{definition}

Among the contact forms $\b$ that represent a given contact structure $\xi$, we may consider only $\b$'s such that their Reeb vector fields $v_\b$ are traversing and $\mathsf{diam}_\mathcal R(\b) =1$. Of course, if $\mathsf{diam}_\mathcal R(\b) = \l >0$ for some contact form $\b$ and its traversing Reeb vector field $v_\b$, then $\mathsf{diam}_\mathcal R(\l^{-1}\b) = 1$ for the traversing Reeb vector field $\l v$ of the contact form $\l^{-1}\b$. Thus, without loss of generality, in  Theorem \ref{th.isoperimetric}, we may assume that $\mathsf{diam}_\mathcal R(\b) =1$.

\begin{theorem}\label{th.isoperimetric} Let $X$ be a compact connected smooth $(2n+1)$-dimensional manifold with boundary, equipped with a contact $1$-form $\b$. Let $v_\b$ be a traversing boundary generic Reeb vector field for $\b$. 

Then we get an ``isoperimetric" inequality
\begin{eqnarray}\label{eq.isoperimetric}
\int_X \b \wedge (d\b)^n \leq \mathsf{diam}_\mathcal R(\b) \cdot \int_{\d_1^+X(v_\b)} (d\b)^n  =:\; \mathsf{diam}_\mathcal R(\b) \cdot vol_{(d\b)^n}(\mathcal T(v_\b)). 
\end{eqnarray}
\indent This enequality can be also written in the ``equatorial" form:
\begin{eqnarray}\label{eq.isoperimetric_A}
\int_X \b \wedge (d\b)^n \leq \mathsf{diam}_\mathcal R(\b) \cdot \int_{\d_2X(v_\b)} \b \wedge (d\b)^{n-1}.
\end{eqnarray}
\end{theorem}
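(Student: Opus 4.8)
The plan is to prove the single inequality (\ref{eq.isoperimetric}) and then deduce the equatorial form (\ref{eq.isoperimetric_A}) for free. Indeed, the Corollary following Lemma \ref{lem.positive_on_d_+} already records, via Stokes' Theorem applied to the exact form $(d\b)^n = d\big(\b \wedge (d\b)^{n-1}\big)$, the identity $\int_{\d_1^+X(v_\b)} (d\b)^n = \int_{\d_2X(v_\b)} \b \wedge (d\b)^{n-1}$. Substituting this into the right-hand side of (\ref{eq.isoperimetric}) turns it verbatim into (\ref{eq.isoperimetric_A}), so the two displayed inequalities are in fact the same.

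The first step is an algebraic simplification of the volume form. Since $\b(v_\b) = 1$, Lemma \ref{lem.df(v)=1} supplies a Lyapunov function $f$ with $df(v_\b) = 1$ and $\int_\g \b = \int_\g df$ along every trajectory $\g$. Put $\a := \b - df$, so $\a(v_\b) = 0$. Because $v_\b \,\rfloor\, (d\b)^n = n\,(v_\b \,\rfloor\, d\b)\wedge (d\b)^{n-1} = 0$, the top-degree form $\a \wedge (d\b)^n$ obeys $v_\b \,\rfloor\, \big(\a \wedge (d\b)^n\big) = \a(v_\b)\,(d\b)^n - \a \wedge \big(v_\b \,\rfloor\, (d\b)^n\big) = 0$; a top-degree form annihilated by the nowhere-vanishing field $v_\b$ must itself vanish, so $\a \wedge (d\b)^n = 0$ and hence $\b \wedge (d\b)^n = df \wedge (d\b)^n$ on $X$.

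The second step is a global flow-box (Fubini) decomposition. Using that $v_\b$ is traversing, the flow $\phi^t$ defines a parametrization $\Psi(x,t) = \phi^t(x)$ on $\{(x,t) : x \in \mathsf{int}(\d_1^+X(v_\b)),\ 0 \le t \le \tau(x)\}$, where $\tau(x)$ is the traversal time of $\g_x$. Under $\Psi$ we have $\d_t \mapsto v_\b$; moreover $f(\phi^t(x)) = f(x)+t$, and $(d\b)^n$ is $v_\b$-invariant (since $\mathcal L_{v_\b}(d\b)^n = d\big(v_\b \,\rfloor\, (d\b)^n\big) = 0$) while $v_\b \,\rfloor\, (d\b)^n = 0$. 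Contracting $\Psi^\ast(\b \wedge (d\b)^n) = \Psi^\ast(df \wedge (d\b)^n)$ with $\d_t$ therefore identifies it with $dt \wedge \pi^\ast\big((d\b)^n|_{\d_1^+X(v_\b)}\big)$, where $\pi$ is projection to the $x$-factor (the $dt$-free part, a $(2n{+}1)$-form on a $2n$-dimensional slice, vanishes). By Lemma \ref{lem.positive_on_d_+}, $(d\b)^n|_{\mathsf{int}(\d_1^+X(v_\b))} > 0$, so integrating first in $t$ yields the exact identity
$$\int_X \b \wedge (d\b)^n \;=\; \int_{\d_1^+X(v_\b)} \tau(x)\,(d\b)^n, \qquad \tau(x) = \int_{\g_x}\b .$$
Since $\tau(x) = \int_{\g_x}\b \le \mathsf{diam}_\mathcal R(\b)$ by Definition \ref{def.Reeb_diam} and $(d\b)^n \ge 0$ on $\d_1^+X(v_\b)$, the right-hand side is at most $\mathsf{diam}_\mathcal R(\b)\cdot \int_{\d_1^+X(v_\b)}(d\b)^n$, which is precisely (\ref{eq.isoperimetric}).

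I expect the main obstacle to be the rigorous justification that $\Psi$ is a measure-preserving bijection onto $X$ off a set of measure zero. The trouble comes from trajectories tangent to $\d_1X$ at interior points, and from trajectories whose entry point lands in a higher Morse stratum $\d_{\ge 2}X(v_\b)$ rather than in $\mathsf{int}(\d_1^+X(v_\b))$: there $\Psi$ may fail to be an immersion or to be injective. Boundary-genericity of $v_\b$ (Definition \ref{def.2.2_boundary-generic}) is exactly what guarantees that such trajectories sweep out only a measure-zero subset of $X$ and meet $\d_1^+X(v_\b)$ in a measure-zero set, so they contribute nothing to either integral. A secondary point requiring care is the orientation bookkeeping needed to ensure $dt \wedge \pi^\ast\big((d\b)^n|_{\d_1^+X(v_\b)}\big)$ matches the given orientation of $X$, i.e.\ that $(d\b)^n$ is genuinely nonnegative on $\d_1^+X(v_\b)$ with the cross-section orientation; this is supplied by Lemma \ref{lem.positive_on_d_+}.
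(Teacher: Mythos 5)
Your proof is correct and follows essentially the same route as the paper: a Santal\'o/Fubini-type disintegration of $\b\wedge(d\b)^n$ along the $v_\b$-trajectories over the base $\d_1^+X(v_\b)$ (the paper cites the Dieudonn\'e generalization of Fubini for the $(-v_\b)$-projection, where you instead carry out the flow-box computation explicitly via $\b\wedge(d\b)^n=df\wedge(d\b)^n$), followed by the bound $\int_{\g}\b\le\mathsf{diam}_{\mathcal R}(\b)$ and Stokes' theorem for the equatorial form. The measure-zero issues you flag around $\d_2X(v_\b)$ are exactly the ones the paper dispenses with by the same genericity argument.
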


\begin{proof} The argument is a variation on the theme of Santalo formula \cite{S}, as interpreted in \cite{K5}. For any $y \in \d_1^+X(v_\b)$ we consider the $v$-trajectory $\g_y$ through $y$. The $(-v)$-directed projection $p: X \to \d_1^+X(v_\b)$ is a submersion, away from the set $p^{-1}(\d_2X(v_\b))$ of zero measure in $\d_1^+X(v_\b)$. Thus, we can use the Dieudonne generalization of the Fubini formula \cite{D}:
$$\int_X \b \wedge (d\b)^n = \int_{y \in \d_1^+X(v_\b)} \Big(\int_{\g_y} \b \Big) (d\b)^n \leq \mathsf{diam}_\mathcal R(\b) \cdot \int_{\d_1^+X(v_\b)} (d\b)^n,$$
together with Definition \ref{def.Reeb_diam}, to validate (\ref{eq.isoperimetric}).

 By Stokes' theorem, applied to $\d_1^+X(v_\b)$, (\ref{eq.isoperimetric_A}) follows. Note that formula (\ref{eq.isoperimetric_A}) testifies that $\int_{\d_2X(v_\b)} \b \wedge (d\b)^{n-1} > 0$, which supports weakly Conjecture \ref{conj.positive_on_Morse_strata}.

Although these arguments are simple, it is somewhat surprising to realize the $(d\b)^n$-volume of the trajectory space $\mathcal T(v_\b)$ depends only on the form $\b \wedge (\d\b)^{n-1}|_{\d_2X(v_\b)}$ on the ``equator" $\d_2X(v_\b)$. 
\hfill
\end{proof}

%{\bf WORK} Note that (\ref{eq.isoperimetric}) implies that 
%$\int_{\d_1^+X(v)} (d\b)^n > 0$, which is consistent with the claim of Lemma \ref{lem.positive_on_d_+}. Thus, if  $\int_{\d_1^+X(v)} (d\b)^n \leq 0$, then no extension $\b$ of $\b^\d$ into $X$ is possible.

%\begin{corollary} % {\bf REVISIT !!} 
%Let $v$ be a traversing vector field on a $(2n+1)$-dimensional compact smooth $X$. %%Let $Y$ be a $(2n)$-dimensional  domain which is contained in $\d_1^+X(v)$, and $U$ a neighborhood of $Y$ in $X$. 
%Let $\om$ be a given closed differential $2$-form on the locus $\d_1^+X(v)$. %, and $v_U =: v|_U$ its Reeb vector field. 

%If $\int_{\d_1^+X(v)} (\om)^n \leq 0$, then there is no contact form $\b$ on $X$ with the Reeb vector field $v$, such that $d\b|_{\d_1^+X(v)} = \om$.  \hfill $\diamondsuit$
%\end{corollary}

\begin{definition}\label{def.average_Reeb_length}
Assume that the Reeb vector field $v_\b$ of a contact form $\b$ is traversing. We introduce the {\sf average length of the Reeb trajectories} by the Fubini type formula
\begin{eqnarray}\label{eq.average_Reeb}
\mathsf{av}_\mathcal R(\b) \;=:\; \frac{\int_X \b \wedge (d\b)^n} {\int_{\d_1^+X(v_\b)} (d\b)^n}.
\end{eqnarray}

By (\ref{eq.isoperimetric_A}), $\mathsf{av}_\mathcal R(\b) \; \leq  \;\mathsf{diam}_\mathcal R(\b).$ \hfill $\diamondsuit$
\end{definition}

%\begin{example}
\noindent {\bf Example 7.1.} 
Let $\b = dz + x\, dy$ and let $X = \big\{\frac{x^2}{(A/2)^2} + \frac{y^2}{(B/2)^2} +\frac{z^2}{(C/2)^2} \leq 1\big\}$. Then the inequality (\ref{eq.isoperimetric}) can be written as
$$\frac{\pi}{6} ABC \leq C \cdot \big(\frac{\pi}{4} AB\big),$$
or as $\frac{\pi}{6}  \leq \frac{\pi}{4}$.
\hfill $\diamondsuit$
\smallskip
%\end{example}

\begin{corollary}\label{cor.beta_and_beta1_share}
For any function $\eta \in C^\infty(\mathcal T(v_\b))$ consider the differential $d\eta$ of its pullback $\Gamma^\ast(\eta)$ under the map $\Gamma: X \to \mathcal T(v_\b)$. Assume that $d\eta(v_\b) > -1$.

Then  the contact form $\b_1 = \b + d\eta$ shares with $\b$ the Reeb diameter $\mathsf{diam}_\mathcal R(\b)$, the volume $\int_X \b \wedge (d\b)^n$, and the volume $vol_{(d\b)^n}(\mathcal T(v_\b))$ of the trajectory space.
%all the quantities in Definitions \ref{def.vol_of_T(beta)} and \ref{def.average_Reeb_length} and in formulas (\ref{eq.isoperimetric}), (\ref{eq.isoperimetric_A}). 
\hfill $\diamondsuit$
\end{corollary}

\begin{proof} The ``exact slantings" $\b_1 = \b + d\eta$ of a contact form $\b$ do not change the $\b$-lengths of Reeb trajectories, provided the function $\eta \in C^\infty(\mathcal T(v_\b))$. 

By Lemma~\ref{lem.beta_equiv_beta1}, the contact forms $\b$ and $\b + d\eta$ share the same Reeb vector field: $v_\b = v_{\b + d\eta}$. Thus, for a trajectory $\g$ bounded by points $a$ and $b$, we get $$\int_\g(\b + d\eta) = \eta(b) - \eta(a) + \int_\g \b = \int_\g \b$$ since $\eta(b) = \eta(a)$. As a result, $\mathsf{diam}_\mathcal R(\b + d\eta) = \mathsf{diam}_\mathcal R(\b)$. 

 Using the Fubini formula, we verify that the $(\b+ d\eta)\wedge (d(\b+d\eta))^n$-induced and the $(\b)\wedge (d\b)^n$-induced volumes of $X$ are equal:
$$\int_X (\b+ d\eta)\wedge (d(\b+d\eta))^n = \int_{y \in \d_1^+X(v_\b)} \Big(\int_{\g_y} (\b +d\eta)\Big) (d\b)^n$$ 
$$ = \int_{y \in \d_1^+X(v_\b)} \Big(\int_{\g_y} \b \Big) (d\b)^n + \int_{y \in \d_1^+X(v_\b)} \Big(\int_{\g_y} d\eta \Big) (d\b)^n.$$
Thanks to the property $\mathcal L_{v_\b}\eta = 0$,  $\int_{\g_y} d\eta = 0$ for all $y$, and the latter formula reduces to
$$ \int_{y \in \d_1^+X(v_\b)} \Big(\int_{\g_y} \b \Big) (d\b)^n = \int_X (\b)\wedge (d\b)^n.$$
 
Also, using Lemma \ref{lem.4.4_X}, we get
$$vol_{(d\b)^n}(\mathcal T(v_\b))=: \int_{\d_1^+X(v_\b)} (d\b)^n =  \int_{\d_1^+X(v_{\b+d\eta})} (d\b)^n$$
$$ =  \int_{\d_1^+X(v_{\b+d\eta})} (d(\b+d\eta))^n =: vol_{(d(\b+ d\eta))^n}(\mathcal T(v_{\b+d\eta}))$$

%\textcolor{red}{WORK, revisit}
\hfill
\end{proof}

%KEEP 
%\begin{definition}\label{def.iso_number}
%Consider a cooriented contact structure $\xi$ on $X$ and its contact forms $\b$ whose Reeb vector fields $v_\b $ are traversing and such that $\mathsf{diam}_\mathcal R(\b) =1$. We define the {\sf optimal isoperimetric number} of contact structure $\xi$ by
%$$iso(\xi) \;=:\; \inf_{\{\b\;\leadsto \; \xi |\;\mathsf{diam}_\mathcal R(\b)\, = \,1 \}} \mathsf{av}_\mathcal R(\b)^{-1}%\Big\{\frac{\int_{\d_1^+X(v)} (d\b)^n}{\int_X \b \wedge (d\b)^n}\Big\} 
%\; \geq \; 1.$$
%\end{definition}

%{\bf WORK} %Can $iso(\xi) =1$ be related to tight contact structures ?? 
%Is $iso(\xi)$ an invariant under a deformation of $\xi$ mod $\xi^\d$ ???? {\bf END WORK}
%\bigskip

%Let $(B_r^{2n+1}, \b_\star)$ be a ball of radius $r$ with the standard contact form $\b_\star$. {\sf Contact ``injectivity" radius}.... of $(X, \b)$.... Apply the following theorem to this setting ... {\bf END WORK}
%\bigskip

%\begin{definition}
%Given a boundary generic traversing vector field $v$ on a compact connected manifold with boundary, we define {\sf the complexity} $c^\bullet(v)$ of $v$ as the maximal number of simple tangencies to $\d_1X$ a $v$-trajectory $\g$ may have plus one. In combinatorial terms, 
%$$c^\bullet(v) =: \max_{\g \subset X} \big\{\frac{1}{2} |\om(\g)| \Big\}.$$ 
%\end{definition}
%
%This definition differs by one from the definition of complexity in \cite{K3}. 

%If $v$ is traversally generic, then $c^\bullet(v) \leq \dim(X)$ (\cite{K3}); if $v$ is convex, $c^\bullet(v) =1$.
%\smallskip

\begin{definition}\label{def.Psi_complexity_trajectories}
Let $Y$ be a smooth manifold and let $v$ a non-vanishing vector field on it that admits a Lyapunov function. Consider a compact smooth connected manifold $X$ with boundary, $\dim X = \dim Y$, and a regular imbedding $\Psi: X \hookrightarrow \mathsf{int}(Y)$ such that $v$ is boundary generic relative the hypersurface $\Psi(\d_1X)$. We introduce a natural number $c^\bullet(\Psi, v)$ as one half of the maximal number of times a $v$-trajectory intersects $\Psi(\d_1X)$. \hfill $\diamondsuit$
\end{definition}

In general, $c^\bullet(\Psi, v)$ seems to resist an intrinsic characterization in terms of the pull-back $\Psi^\dagger(v)$ of the vector field $v$ to $X$ and of the combinatorial types of $\Psi^\dagger(v)$-trajectories. We know only one \emph{lower} bound of $c^\bullet(\Psi, v)$ in terms of $\Psi^\dagger(v)$: 
 $$c^\bullet(\Psi^\dagger(v)) =: \max_{\g^\dagger \subset X} \Big\{\frac{1}{2} |\om(\g^\dagger)| \Big\} \leq c^\bullet(\Psi, v),$$
 where $\g^\dagger$ is a $\Psi^\dagger(v)$-trajectory (see (\ref{eq.2.5}) for the definition of $|\om(\g^\dagger)| $).
This estimate follows from the local models of the traversing boundary generic  flows in the vicinity of their trajectories \cite{K1}, \cite{K3}. \smallskip

%In a few simple cases, it is possible to give a priori estimates of the number $c^\bullet(\Psi, v)$. 
\begin{remark}
\emph{Even if  the connected source and the target of $\Psi$ both carry \emph{convex} vector fields, the number $c^\bullet(\Psi, v)$ can be arbitrarily big.  Indeed,  just rap a very narrow ($\e \ll 1$) band $Z = I \times [-\e, \e]$ with rounded corners $k$ times around the cylinder $C = S^1 \times [-1, 1]$, where $I$ denotes the interval $[0,1]$ and the vector field $v$ is aligned with the generators of the cylinder.   Next, consider the embedding $$\Psi: X = S^1 \times I \times [-\e, \e]\; \hookrightarrow \;S^1 \times S^1 \times [-1, 1] =Y,$$ obtained by multiplying the wrapping map $\psi : Z \subset C$ with the identity map on the first multiplayer $S^1$. Then $c^\bullet(\Psi, v_Y) = k$.} \hfill $\diamondsuit$
\end{remark}
\smallskip

%\begin{lemma}\label{lem.3_c(Psi,v)=1} For an embedding $\Psi: X \to Y$ as in Definition \ref{def.Psi_complexity_trajectories}, if $v = v_Y$ and its the transfer $v_X = v_Y|_{\Psi(X)}$ are convex, $X$ is connected, and  and $\pi_1(Y) = 1$, %for any choice of the base point $pt$,
%then $c^\bullet(\Psi, v) = 1$.
%\end{lemma}

%\begin{proof}. Assume to the contrary that a $v$-trajectory $\g$ hits the locus $\Psi(\d_1^+X(v_X))$ transversally at  two points, $x \neq y$. The set $\d_1^+X(v_X)$ is connected since $v_X$ is assumed to be convex and $X$ is connected.  Thus, there is a simple path $\delta \subset \Psi(\d_1^+X(v_X) \setminus \d_2^-X(v_X))$ that connects $x$ to $y$. Using the convexity of $v_Y$ and the transversality of $v_Y$ to $\Psi(\d_1^+X(v_X) \setminus \d_2^-X(v_X))$, the $(-v_Y)$-directed map $\Pi^\d: \mathsf{int}(\Psi(\d_1^+X(v_X)) \to \d_1^+Y(v_Y)$ is a covering. The image of $\delta$ under $\Pi: Y \to \d_1^+Y(v)$ is a loop $\delta'$ with the base point $z = \Pi(x)= \Pi(y)$. Since $\pi_1(Y) = \pi_1(\d_1^+Y(v_Y))$ is an isomorphism and $\pi_1(Y) =1$, the loop $\delta'$ is contractible in $\d_1^+Y(v_Y)$.  This contradicts to the fact that its lift $\delta$ connects distinct points $x, y$ in the fiber $(\Pi^\d)^{-1}(z)$. \hfill 
%\end{proof}

Consider a traversing $v$-flow on a compact manifold $Y$ and a regular embedding $\Psi: X \to \mathsf{int}(Y)$, where $X$ is a compact manifold of the same dimension as $Y$.  

Assuming that $v$ is boundary generic with respect to the hypersurface $\Psi(\d_1X)$, with the help of $(-v)$-flow, we  generate a {\sf shadow} $\mathsf X^\d(\Psi)$ of $\Psi(X)$ on the boundary of $Y$. It is produced by forming the space $\mathsf X(\Psi)$ of $v$-trajectories through the points of $\Psi(X)$ (equivalently, through the locus $\d_1^+\Psi(X)(v)$) and then taking the intersection  $\mathsf X(\Psi) \cap \d_1^+Y(v)$.  This constructions presumes that $\Psi(X)$ is ``semi-transparent" for the flow (see Fig.~\ref{fig.3_contact_1}, the left diagram). 

%The second type of shadow is based on the assumption that the flow cannot ``penetrate" $\Psi(X)$. Consider the manifold $Z =: Y \setminus \mathsf{int}(\Psi(X))$ and {\bf WORK}
\smallskip

Our next theorem resembles  Gromov's non-squeezing theorem \cite{Gr2} in symplectic geometry, being transferred to the contact geometry environment. Non-squeezing results about the contact embeddings of $S^1 \times B_{r_1} \subset S^1 \times \R^{2n}$ into $S^1 \times B_{r_2}  \subset S^1 \times \R^{2n}$, where $B_{r_1}$ and $B_{r_2}$ are the $(2n)$-balls of radii $r_1$ and $r_2$ and $S^1 \times \R^{2n}$ carries the standard contact form, may be found in the paper  \cite{EKP} by Eliashberg, Kim, and  Polterovich, and in its extensions \cite{Chi} by Chiu  and \cite{FSZ} by Frazer, Sandon, and Zhang. In particular, they proved that  contact squeezing is impossible for $1 \leq \pi r_2^2 \leq \pi r_1^2$ \cite{Chi}, \cite{FSZ}. 

Theorem \ref{th.non-squeezing} deals with more general embeddings/immersions $\Psi: X_1 \to X_2$ ($\dim X_1 = 2n+1 = \dim X_2$)  that respect the contact forms: i.e., $\b_1 = \Psi^\ast(\b_2)$ (see Fig. \ref{fig.3_contact_1}). It employs the positive integer $c^\bullet(\Psi, v_{\b_2})$ (see Definition \ref{def.Psi_complexity_trajectories}) that measures how folded is the image $\Psi (\d_1X_1)$ with respest to the $v_{\b_2}$-flow. The ``equatorial" inequality of  Theorem \ref{th.non-squeezing}  may be also viewed as delivering a \emph{lower} bound of the number $c^\bullet(\Psi, v_{\b_2})$ in terms of the appropriate $\b_1$- and $\b_2$-generated symplectic $2n$-volumes. 

\bigskip

\begin{theorem}{\bf (Non-squeezing equatorial property of the Reeb fields)} \label{th.non-squeezing}  

Consider two compact smooth manifolds $X_1, X_2$, equipped with contact forms $\b_1, \b_2$, respectively. We assume that the Reeb vector fields  $v_{\b_1}, v_{\b_2}$ are boundary generic and $v_{\b_2}$ is traversing.  Let $\Psi: X_1 \hookrightarrow X_2$ be a smooth embedding such that $\b_1 = \Psi^\ast(\b_2)$. Then

$$\int_{X_1} \b_1 \wedge (d\b_1)^n \;  \leq \; \int_{X_2} \b_2 \wedge (d\b_2)^n, $$ 

$$\mathsf{diam}_\mathcal R(\b_1) \; \leq \; \mathsf{diam}_\mathcal R(\b_2).$$
%\smallskip

 We also get the inequality between the symplectic $2n$-volumes of the two trajectory spaces (or, equivalently, between the contact-like volumes of the tangency locii, the ``equators"): %(with respect the symplectic forms $d\b_i \big |_{\d_1^+X_i(v_i)}$):
$$0 \leq \int_{\d_2X_1(v_{\b_1})} \b_1 \wedge (d\b_1)^{n-1} = vol_{(d\b_1)^n}(\mathcal T(v_{\b_1})) \; \leq \; $$
$$\; \leq \;  c^\bullet(\Psi, v_{\b_2}) \cdot vol_{(d\b_2)^n}(\mathcal T(v_{\b_2})) = c^\bullet(\Psi, v_{\b_2}) \cdot \int_{\d_2X_2(v_{\b_2})} \b_2 \wedge (d\b_2)^{n-1}.$$
%If $v_{\b_1}$ is traversally generic, then $c^\bullet(v_{\b_1}) \leq \dim(X_1)$; 
%if $v_{\b_1}$ is convex, then $c^\bullet(\Psi, v_{\b_1}) =1$.

%If $X_1$ is connected, $v_{\b_1}$, $v_{\b_2}$ are convex, and $\pi_1(Y) =1$, then $c^\bullet(\Psi, v_{\b_2}) =1$.
\end{theorem}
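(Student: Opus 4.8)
The plan is to exploit that $\Psi$ is a codimension-zero (regular) embedding preserving the contact form, so that on $\mathsf{int}(\Psi(X_1))$ it is a local contactomorphism and therefore carries the Reeb field to the Reeb field, i.e. $\Psi_\ast v_{\beta_1} = v_{\beta_2}$ there. In particular $\beta_1 \wedge (d\beta_1)^n = \Psi^\ast\big(\beta_2 \wedge (d\beta_2)^n\big) > 0$, so $\beta_1$ is a contact form with Reeb field $v_{\beta_1}$, each $v_{\beta_1}$-trajectory is mapped by $\Psi$ onto a sub-arc of a $v_{\beta_2}$-trajectory, and conversely the $v_{\beta_1}$-trajectories are exactly the connected components of the $\Psi$-preimages of the $v_{\beta_2}$-trajectories.

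First I would dispatch the two volume inequalities. Since $\Psi$ is injective and $\beta_2 \wedge (d\beta_2)^n$ is a positive volume form, $\int_{X_1}\beta_1\wedge(d\beta_1)^n = \int_{\Psi(X_1)}\beta_2\wedge(d\beta_2)^n \le \int_{X_2}\beta_2\wedge(d\beta_2)^n$. For the diameter, each $v_{\beta_1}$-trajectory $\gamma_1$ satisfies $\int_{\gamma_1}\beta_1 = \int_{\Psi(\gamma_1)}\beta_2$, and $\Psi(\gamma_1)$ is a sub-arc of a $v_{\beta_2}$-trajectory $\gamma_2$; because $\beta_2(v_{\beta_2}) = 1 > 0$ the integral of $\beta_2$ over a sub-arc is bounded by its integral over all of $\gamma_2$, whence $\int_{\gamma_1}\beta_1 \le \mathsf{diam}_{\mathcal R}(\beta_2)$, and taking the supremum gives $\mathsf{diam}_{\mathcal R}(\beta_1)\le\mathsf{diam}_{\mathcal R}(\beta_2)$.

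The equatorial inequality is the heart of the matter. By Definition \ref{def.vol_of_T(beta)}, $vol_{(d\beta_1)^n}(\mathcal T(v_{\beta_1})) = \int_{\d_1^+X_1(v_{\beta_1})}(d\beta_1)^n$, and since $(d\beta_1)^n=\Psi^\ast(d\beta_2)^n$ and $\Psi$ restricts to a diffeomorphism of $\d_1^+X_1(v_{\beta_1})$ onto the \emph{entry locus} $E := \Psi\big(\d_1^+X_1(v_{\beta_1})\big)$ of $v_{\beta_2}$ into $\Psi(X_1)$, this equals $\int_E (d\beta_2)^n$. Now I would use that $(d\beta_2)^n$ is closed, $v_{\beta_2}$-flow-invariant (because $\mathcal L_{v_{\beta_2}}\beta_2 = 0$), and horizontal ($v_{\beta_2}\,\rfloor\,(d\beta_2)^n = 0$), so it descends to a well-defined non-negative $2n$-form $\omega$ on the smooth part of the trajectory space $\mathcal T(v_{\beta_2})$, the non-negativity coming from Lemma \ref{lem.positive_on_d_+}. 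Integrating a horizontal, invariant form over a transversal ``screen'' computes, by the Dieudonn\'e--Fubini argument already used for Theorem \ref{th.isoperimetric}, the integral over $\mathcal T(v_{\beta_2})$ of $\omega$ weighted by the number of intersection points of each trajectory with that screen. Applying this to $E$ and to $\d_1^+X_2(v_{\beta_2})$ gives
\[
\int_E (d\beta_2)^n = \int_{\mathcal T(v_{\beta_2})} m_E\,\omega, \qquad \int_{\d_1^+X_2(v_{\beta_2})}(d\beta_2)^n = \int_{\mathcal T(v_{\beta_2})} m_{\d}\,\omega,
\]
where $m_E(\gamma)$ is the number of times $\gamma$ enters $\Psi(X_1)$ and $m_{\d}(\gamma)\ge 1$ is the number of times $\gamma$ enters $X_2$. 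Along any trajectory entries and exits alternate, so by Definition \ref{def.Psi_complexity_trajectories} one has $m_E \le c^\bullet(\Psi,v_{\beta_2}) \le c^\bullet(\Psi,v_{\beta_2})\cdot m_{\d}$; since $\omega\ge 0$ the two displayed identities then yield $\int_E(d\beta_2)^n \le c^\bullet(\Psi,v_{\beta_2})\int_{\d_1^+X_2}(d\beta_2)^n$, which is the claimed bound. The non-negativity $0\le\int_{\d_2X_1(v_{\beta_1})}\beta_1\wedge(d\beta_1)^{n-1}$ and its identification with $vol_{(d\beta_1)^n}(\mathcal T(v_{\beta_1}))$ follow from Stokes' theorem exactly as in the corollary following Lemma \ref{lem.positive_on_d_+}.

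The main obstacle I anticipate is the rigorous descent of $(d\beta_2)^n$ to the (non-manifold) trajectory space together with the multiplicity-weighted pushforward identity: one must control the measure-zero tangency locus $\d_2X_2(v_{\beta_2})$ and its deeper strata (and likewise $\Psi(\d_2X_1)$), where the $(-v_{\beta_2})$-projections fail to be submersions, and verify that entries into $\Psi(X_1)$ are counted correctly against $c^\bullet(\Psi,v_{\beta_2})$. I expect this to be handled by the same Dieudonn\'e generalization of Fubini's theorem invoked in Theorem \ref{th.isoperimetric}, applied on the full-measure open set of $v_{\beta_2}$-trajectories that are transversal to all the relevant boundaries.
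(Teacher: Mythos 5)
Your proposal is correct, and its two pillars are the same as the paper's: the first two inequalities by naturality of $\Psi^\ast$ plus positivity of $\b_2(v_{\b_2})$, and the equatorial bound from the flow-invariance of $(d\b_2)^n$ combined with the fact that the number of \emph{entries} of a $v_{\b_2}$-trajectory into $\Psi(X_1)$ is at most $c^\bullet(\Psi,v_{\b_2})$. The only real difference is bookkeeping. The paper never descends to $\mathcal T(v_{\b_2})$: it excises the trajectories through $\Psi(\d_2X_1)\sqcup\d_2X_2$, splits the entry locus $\Psi(\d_1^+X_1(v_1))$ into connected components $\mathcal V_{1,\kappa}$, flows each down to a region $\mathcal Y_\kappa\subset\d_1^+X_2(v_2)$, and applies Stokes' theorem to $(d\b_2)^n$ on each flow-out chain (the lateral boundary contributing zero because it is swept by trajectories, on which $(d\b_2)^n$ degenerates); it then bounds $\sum_\kappa\int_{\mathcal Y_\kappa}(d\b_2)^n$ by $c^\bullet\cdot\int_{\d_1^+X_2}(d\b_2)^n$ using that the $\mathcal Y_\kappa$ overlap with multiplicity at most $c^\bullet$ and $(d\b_2)^n\ge 0$ there. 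Your version pushes both screens down to the trajectory space and compares the multiplicity weights $m_E$ and $m_\d$ via the Dieudonn\'e--Fubini disintegration already used for Theorem \ref{th.isoperimetric}; this is the same conservation law expressed dually, and it buys a slightly cleaner statement at the cost of having to justify integration against the (singular) quotient $\mathcal T(v_{\b_2})$ --- precisely the measure-zero control you flag, which the paper's chain-by-chain Stokes argument sidesteps by staying on $\d_1^+X_2(v_2)$. One small point worth making explicit in your write-up: $v_{\b_1}$ is not assumed traversing, but it inherits traversality from $v_{\b_2}$ because $\Psi_\ast v_{\b_1}=v_{\b_2}|_{\Psi(X_1)}$; you use this tacitly when invoking Definition \ref{def.vol_of_T(beta)} for $\b_1$.
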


\begin{figure}[ht]
\centerline{\includegraphics[height=2in,width=4.6in]{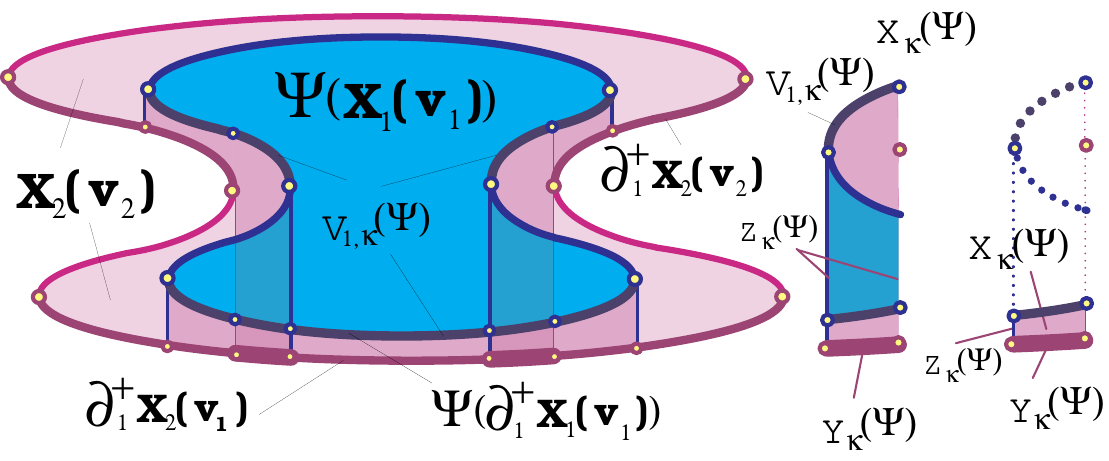}}
\bigskip
\caption{\small{The figure helps with the notations in the proof of Theorem \ref{th.non-squeezing}. In the figure, $v_{\b_1}$ and $v_{\b_2}$ are the constant vertical vector fields 
and $c^\bullet(\Psi, v_{\b_2}) = 2$.}}
\label{fig.3_contact_1}
\end{figure}

%\begin{figure}[ht]
%\centerline{\includegraphics[height=3.5in,width=4.5in]{}}
%\bigskip
%\caption{\small{In the figure, $v_{\b_1}$ and $v_{\b_2}$ are the constant vertical vector fields. 
%In the last claim of Theorem \ref{th.non-squeezing}, $c^\bullet(\Psi, v_{\b_2}) = 2$  (the upper diagram), and $c^\bullet(\Psi, v_{\b_2}) = 1$ a convex $v_1$ (the lower diagram). The region $\mathcal X^\#(\Psi)$ shows the difference between the two cases.}}
%\label{fig.3_contact_1}
%\end{figure}

\begin{proof} To simplify the notations, put $v_1 =: v_{\b_1}$, $v_2 =: v_{\b_2}$.

 The validation of the first two inequalities is straightforward. 
Using that $\b_1 = \Psi^\ast(\b_2)$, we see that $\Psi_\ast(v_1) = v_2|_{\Psi(X_1)}$. Moreover, the combinatorial tangency pattern (see the discussion preceding formula (\ref{eq.2.4})) of every $v_1$-trajectory $\g_1$ to $\d_1X_1$ coincides with the tangency pattern to $\Psi(\d_1X)$ of the portion $\Psi(\g_1)$ of the unique $v_2$-trajectory $\g_2 \supset \Psi(\g_1)$. Note that the tangency pattern of $\g_2$ to $\Psi(\d_1X_1)$ could be different.\smallskip 

Since $\Psi^\ast(\b_2) = \b_1$, by naturality, we get  $\int_{X_1} \b_1 \wedge (d\b_1)^n =  \int_{\Psi(X_1)} \b_2 \wedge (d\b_2)^n$. Evidently,  $\int_{\Psi(X_1)} \b_2 \wedge (d\b_2)^n \leq \int_{X_2} \b_2 \wedge (d\b_2)^n$. Thus, $\int_{X_1} \b_1 \wedge (d\b_1)^n \leq \int_{X_2} \b_2 \wedge (d\b_2)^n$. \smallskip

 It follows instantly that $\mathsf{diam}_\mathcal R(\b_1) \leq \mathsf{diam}_\mathcal R(\b_2)$, since the $\Psi$-image of any $v_{\b_1}$-trajectory $\g$ is a segment of a $v_{\b_2}$ trajectory $\g'$ that contains $\Psi(\g)$.\smallskip
 
Now we are going to validate the equatorial inequality (see Fig.~\ref{fig.3_contact_1} for all the ingredients of the constructions below). Let $\mathsf W_2(\Psi)$ denote the $2n$-dimensional set comprised of $v_2$-trajectories through the locus $\Psi(\d_2X_1(v_1)) \coprod \d_2X_2(v_2)$. Consider the set (see Fig.~ \ref{fig.3_contact_1}) $$\mathsf V_1^\circ(\Psi) =:\, \Psi\big(\d_1^+X_1(v_1)\big) \setminus \big(\Psi(\d_1^+X_1(v_1)) \cap \mathsf W_2(\Psi)\big)$$
along which the continuous dependence of the $v_2$-flow on the initial conditions in $\Psi(\d_1^+X_1(v_1))$ holds. 
%and its $\Psi$-preimage  in $\d_1^+X_1(v_1)$, which we denote by $\d_1^+X_1^\circ(v_1)$. 
 Let $\mathsf V^\circ_{1,\kappa}(\Psi)$ be a connected component  of the set  $\mathsf V_1^\circ(\Psi)$, and let $\mathsf V_{1, \kappa}(\Psi)$ be the closure of $\mathsf V_{1, \kappa}^\circ(\Psi)$. We denote by $\d^+_1X^\circ_{1, \kappa}(v_1) \subset \d_1^+X_1(v_1)$ the $\Psi$-preimage of $\mathsf V^\circ_{1,\kappa}(\Psi)$.

Keeping in mind that $\Psi_\ast(v_1) = v_2 |_{\Psi(X_1)}$, consider the set $\mathsf X_\kappa^\circ(\Psi)$, formed by the union of portions of the $(-v_2)$-trajectories that originate at the set $\mathsf V_{1, \kappa}^\circ(\Psi)$ and terminate at the points of the set $\d_1^+X_2(v_2)$. %Using the convexity hypothesis ``$\d_2^+X_1(v_{\b_1}) = \emptyset$", we see that no portion of $(-v_{\b_2})$-trajectory that originates in the set $\Psi(\d_1^+X_1(v_{\b_1}))$ terminates at a point of $\Psi(\d_1^-X_1(v_{\b_1}))$.
Let $\mathsf X_\kappa(\Psi)$ be the closure of $\mathsf X_\kappa^\circ(\Psi)$.

Let  $\mathsf Y_\kappa(\Psi) =:\, \mathsf X_\kappa(\Psi) \cap  \d_1^+X_2(v_2)$ be the set where the $(-v_2)$-trajectories through $\mathsf V_{1, \kappa}(\Psi)$ terminate. Then the boundary of the set $\mathsf X_\kappa(\Psi)$ consists of three parts: $\mathsf V_{1, \kappa}(\Psi),\,  \mathsf Y_\kappa(\Psi)$, and the rest, denoted by $\mathsf Z_\kappa(\Psi)$. The $2n$-dimensional Lebesgue measures of mutual intersections of these three parts is zero.  
 
  We notice that $\mathsf Z_\kappa(\Psi)$ consists of portions of $(-v_2)$-trajectories. Since $v_2\, \rfloor\,  d\b_2 =0$, we conclude that $\int_{\mathsf Z_\kappa(\Psi)} (d\b_2)^n  = 0$. 

 %We notice that $\mathcal Z_\kappa(\Psi)$ consists of portions of $(-v_2)$-trajectories that are tangent to $\Psi(\d_2^+X_1(v_1))$. Since the $2$-form $d\b_1|_{\d_1^+X_1(v_1)}$ degenerates along $\d_2 X_1(v_1)$ and the diffeomorphism $\Psi$ takes $\d_2X_1(v_1)$ to $\Psi(\d_2X_1(v_1))$, we conclude that $\int_{\mathcal Z_\kappa(\Psi)} (d\b_2)^n  =0$. 
 
 Now we apply  Stokes' theorem to the zero form $d\big((d\b_2)^n\big)$ on $\mathsf X_\kappa(\Psi)$ to get 
$$0 = \int_{\mathsf X_\kappa(\Psi)} d((d\b_2)^n) = \int_{\mathsf Y_\kappa(\Psi)} (d\b_2)^n - \int_{\mathsf  V_{1,\kappa}(\Psi)} (d\b_2)^n.$$ 
Therefore, using that $\Psi(\d^+_1X^\circ_{1, \kappa}(v_1)) = \mathsf V_{1,\kappa}(\Psi)$ by the definition of the relevant sets, we get
$$ \int_{\mathsf Y_\kappa(\Psi)} (d\b_2)^n = \int_{\mathsf  V_{1,\kappa}(\Psi)} (d\b_2)^n = \int_{\Psi(\d^+_1X^\circ_{1, \kappa}(v_1))} (d\b_2)^n.$$ 
Thus, by  the latter equality,
$$\int_{\d^+_1X^\circ_{1, \kappa}(v_1)} (d\b_1)^n = \int_{\Psi(\d^+_1X^\circ_{1, \kappa}(v_1))} (d\b_2)^n 
= \int_{\mathsf Y_\kappa(\Psi)} (d\b_2)^n \leq  \int_{\d_1^+X_2(v_2))} (d\b_2)^n,$$
since $\mathsf Y_\kappa(\Psi)$ is a part of the locus $\d_1^+X_2(v_2)$ on which, by Lemma \ref{lem.positive_on_d_+}, $(d\b_2)^n \geq 0$. \smallskip

Next, we notice that the domains  $\{\mathsf Y_\kappa(\Psi)\}_\kappa$ may overlap massively in $\d_1^+X_2(v_2))$ and the multiplicity of the overlap is bounded from above by the  maximal number of times a $v_2$-trajectory, passing through the set $\Psi(X_1)$, hits the set $\Psi(\d_1^+X_1(v_1))$. By Definition ~\ref{def.Psi_complexity_trajectories}, this natural number is $c^\bullet(\Psi, v_{\b_2})$.  

Adding the inequalities for all $\kappa$'s and using that   $\{\d_1^+X^\circ_{1, \kappa}(v_1)\}_\kappa$ form a partition of $\d_1^+X_1(v_1)$, modulo the set of $2n$-dimensional measure zero, we get
$$\int_{\d_1^+X_1(v_1)} (d\b_1)^n = \sum_\kappa \int_{\d_1^+X_{1,\kappa}(v_1)} (d\b_1)^n \, \leq \, c^\bullet(\Psi, v_{\b_2}) \cdot\int_{\d_1^+X_2(v_2)} (d\b_2)^n.$$
\smallskip

Finally, in view of Definition \ref{def.vol_of_T(beta)} and by Stokes' Theorem, we get $$\int_{\d_2X_1(v_1)} \b_1 \wedge (d\b_1)^{n-1} = vol_{(d\b_1)^n}(\mathcal T(v_1)),\;\; \;\;\int_{\d_2X_1(v_2)} \b_2 \wedge (d\b_2)^{n-1} = vol_{(d\b_2)^n}(\mathcal T(v_2)).$$
\end{proof}

\begin{example}\label{ex.ellipsoid}
\emph{Let $Y$ be a compact connected smooth $(2n+1)$-manifold with boundary,  equipped with a contact form $\b_Y$ such that its Reeb vector field $v_Y$ is traversing and boundary generic. Let $X$ be the ellipsoid $\big\{\sum_{i=1}^{2n+1} \frac{x_i^2}{a_i^2} \leq 1\big\}$ in the Euclidean space $\mathsf E^{2n+1}$, equipped with the standard contact form}
$$\b = dx_{2n+1} +  \frac{1}{2}\sum_{i=1}^n (x_{i+1}\, dx_i - x_i\, dx_{i+1}).$$
\emph{Then Theorem \ref{th.non-squeezing} claims that, for any embedding $\Psi: X \to Y$ such that $\b = \Psi^\ast(\b_Y)$, the following inequalities are valid:}
\begin{itemize}
\item $$\frac{\pi^{n+\frac{1}{2}}}{\Gamma(n+\frac{3}{2})}\; 
a_1 a_2 \dots a_{2n+1} \; \leq \; \int_Y \b_Y \wedge (d\b_Y)^n,$$
\item $$a_{2n+1} \leq \mathsf{diam}_\mathcal R(\b_Y),$$
\item $$\frac{\pi^{n}}{\Gamma(n + 1)}\; a_1 a_2 \dots a_{2n}\; \leq \; c^\bullet(\Psi, v_{\b_2}) \cdot \int_{\d_1^+Y(v_Y)} (d\b_Y)^n$$ $$ = c^\bullet(\Psi, v_{\b_2}) \cdot \int_{\d_2Y(v_Y)} d\b_Y \wedge (d\b_Y)^{n-1},$$
 \end{itemize}
\emph{where $\Gamma(\sim)$ is the Gamma function.}
 %$$V = \frac{\pi^{n+\frac{1}{2}}}{\Gamma(n+\frac{3}{2})}\; a_1 a_2 \dots a_{2n+1}$$
%Section 
%$$\sum_{i=1}^{2n} \frac{x_i^2}{a_i^2} \leq 1$$
%$$W = \frac{\pi^{n}}{\Gamma(n + 1)}\; a_1 a_2 \dots a_{2n}$$
\emph{
In particular, for $X$ being a ball of radius $r$, we get the following constraints on the {\sf injectivity radius} $r$ of a contact embedding:
$$\frac{\pi^{n+\frac{1}{2}}}{\Gamma(n+\frac{3}{2})}\; 
r^{2n+1} \leq \int_Y \b_Y \wedge (d\b_Y)^n,$$
$$r \leq \mathsf{diam}_\mathcal R(\b_Y),$$
$$\frac{\pi^{n}}{\Gamma(n + 1)}\; r^{2n}\; \leq \; c^\bullet(\Psi, v_{\b_2})\cdot \int_{\d_2Y(v_Y)} d\b_Y \wedge (d\b_Y)^{n-1}.$$
%If $\pi_1(Y) =1$ and $v_{\b_2}$ is convex, then by Lemma \ref{lem.3_c(Psi,v)=1},  $c^\bullet(\Psi, v_{\b_2}) = 1$.
}
\hfill $\diamondsuit$
\end{example}

\begin{example}\label{ex.sand_clock}
%{\bf WORK} 
\emph{
Consider a solid $X$ shaped as a sand clock (see Fig.~\ref{fig.3_contact_2}) which is vertically aligned with the $z$-axis in the $xyz$-space $\R^3$. Let $\b = dz + x\, dy$. We consider a $2$-parameter family of  transformations $\{A_{\l\mu}: x \to \l x, y \to \l y, z \to \mu z\}_{\l, \mu \in \R_+}$. We compare two contact $3$-folds $(X, \b)$ and $(A_{\l\mu}(X), \b)$. Then  $c^\bullet(\Psi, v_2) \geq 2$.}

\emph{
For $c^\bullet(\Psi, v_2) = 2$, unless $\l < 2,\, \mu < 1,\, \l^2\mu < 1$, no contact embedding $\Psi$ of $X$ into  $A_{\l\mu}(X)$ exists.  \hfill $\diamondsuit$
}
\end{example}

For any boundary generic Reeb vector field $v_\b$ on a compact $(2n+1)$-manifold $X$, we have $\d_jX(v_\b) = \d(\d_{j-1}^+X(v_\b))$ %and $\d_jY(v_Y) = \d(\d_{j-1}^+Y(v_Y))$ 
for all $j \geq 1$.  

In what follows, we omit the dependence of the Morse strata on the vector field $v_\b$.

Thus, for an {\sf odd} $j$, by Stokes' theorem, %{\bf ???}
$$\int_{\d_jX} (d\b)^{\frac{2n+1-j}{2}} = \int_{\d_{j-1}^+X} d\big((d\b)^{\frac{2n+1-j}{2}}\big) = \int_{\d_{j-1}^+X} \mathbf 0 \; = \; 0.$$
In contrast,  for an {\sf even} $j$, we get $$\int_{\d_jX}\b \wedge (d\b)^{\frac{2n-j}{2}} = \int_{\d_{j-1}^+X} d\big(\b \wedge (d\b)^{\frac{2n-j}{2}}\big) = \int_{\d_{j-1}^+X} (d\b)^{\frac{2n+2-j}{2}}.$$ %\; \; \mathbf{????} \geq \;  \;0. $$
Conjecturally, the last integral is positive (see Conjecture \ref{conj.positive_on_Morse_strata} and Examples 5.1-5.2). 
\smallskip

Motivated by these observations, we introduce some %new 
{\sf numerical measures of the stratified concavity/convexity} of the Reeb flows $v_\b$ relative to the boundary $\d_1X$:
\begin{eqnarray}\label{eq.j-wrinkle_A}
\kappa_j^+(\b) & =: \; & \int_{\d_j^+X} (d\b)^{\frac{2n+1-j}{2}} \;\;\; (\equiv \kappa_{j+1}(\b)),  \nonumber\\
\kappa_j(\b) & =\; 0 & \text{\; for an odd } j \in [1, 2n-1].
\end{eqnarray}
\begin{eqnarray}
\kappa_j^+(\b) & =: \; &\int_{\d_j^+X} \b \wedge (d\b)^{\frac{2n-j}{2}}, \nonumber  \label{eq.j-wrinkle_AA} \\
\kappa_j(\b) & =: \;& \int_{\d_jX} \b \wedge (d\b)^{\frac{2n-j}{2}} \;\;\; (\equiv \kappa^+_{j-1}(\b))
 \text{\; for an even } j \in [2, 2n], 
 \label{eq.j-wrinkle_BB}
\end{eqnarray}

Of course, these quantities depend on the choice of the contact form $\b$ that represents a given contact structure $\xi$ and whose Reeb vector field $v_\b$ is boundary generic. They measure how ``wrinkled" $\d_1X$ is in relation to the Reeb flow. By the very definition of $\kappa_j^+(\b), \kappa_j(\b)$, they belong to the category of ``boundary data". In general, we do not know whether these quantities are positive for all $j \geq 2$; Conjecture \ref{conj.positive_on_Morse_strata} suggests they are. 
\smallskip

With these $\kappa$-quantities in place, note that the inequality (\ref{eq.isoperimetric}) can be written as $$\kappa^+_{0}(\b) \leq \mathsf{diam}_\mathcal R(\b)\cdot \kappa^+_{1}(\b),$$  and the inequality (\ref{eq.isoperimetric_A})  as $$\kappa^+_{0}(\b) \leq \mathsf{diam}_\mathcal R(\b)\cdot\kappa_{2}(\b),$$ where $\kappa^+_{0}(\b) =: \int_X \b \wedge (d\b)^n.$

%%%%%%%%%%%%%%%%%%%%%%%%

\subsection{Invariance of the signed volumes $\{\kappa_j(\b)\}$ under special  deformations of $\b$}

\begin{definition}\label{def.volume-preserving_families}  Let $\mathcal B= \{\b_t\}_{t \in [0, 1]}$ be a family of contact forms on a compact connected smooth $(2n+1)$-manifold $X$ with boundary.  \smallskip

$\bullet$ We say the family $\mathcal B$ is {\sf variationally boundary exact}
if $$\frac{\d}{\d t}\b_t \Big|_{\d_1X} = d\eta_t $$ %\Big|_{\d_1X}$$ 
for a $t$-family of smooth functions $\eta_t: \d_1X \to \R$.\smallskip

$\bullet$  We say the family $\mathcal B$ is {\sf variationally boundary $(2n+1-j)$-exact} if, for some smooth family of $(2n-j)$-forms $\{\zeta_t\}_{t \in [0, 1]}$ on $\d_1X$,
\begin{eqnarray}
\Big\{\frac{\d}{\d t}\Big[ \b_t \wedge (d\b_t)^{\frac{2n - j}{2}} \Big] \Big\} \Big |_{\d_1X} = d\zeta_t , \text{ where } j \in [2, 2n] \text{ is even, } \label{eq.boundary_exact_A} \\
\Big\{\frac{\d}{\d t}\Big[(d\b_t)^{\frac{2n+1 - j}{2}} \Big] \Big\} \Big |_{\d_1X} = d\zeta_t, 
 \text{ where  } j \in [1, 2n-1] \text{ is odd }. \label{eq.boundary_exact_B} 
\end{eqnarray}
 Occasionally, we complement these requirements with the additional one:
 \begin{eqnarray} \label{eq.boundary_exact_C}
 \zeta_t\big|_{\d_{j+1}X(v_{\b_t})} = 0. \quad \quad \quad \quad\quad \quad \quad \quad \hfill \diamondsuit
 \end{eqnarray}
\end{definition}

\begin{lemma}\label{lem.variationally_boundary_exact_family} Any variationally boundary exact family $\mathcal B$ of contact forms $\{\b_t\}$ is variationally boundary $(2n+1-j)$-exact for all $j$. \smallskip

If the functions ${\eta_t}$ are such that $\eta_t|_{\d_{j+1}X(v_t)} = 0$, then $\zeta_t|_{\d_{j+1}X(v_t)} = 0$.% for each $j \equiv 0 \mod 2$. 
\end{lemma}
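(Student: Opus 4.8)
The plan is to reduce everything to the boundary and to exploit that, once restricted to $\d_1X$, the variation $\dot\b_t$ becomes \emph{exact}; this exactness annihilates precisely the terms in which a time-derivative falls on $d\b_t$. Write $i\colon \d_1X \hookrightarrow X$ for the inclusion, so that in the paper's notation $\a|_{\d_1X} = i^\ast\a$. Since $i$ is independent of $t$, the pullback $i^\ast$ commutes both with $\frac{\d}{\d t}$ and with $d$. Setting $\a_t =_{\mathsf{def}} i^\ast\b_t$, variational boundary exactness says exactly $\dot\a_t = d\eta_t$, whence the single identity that drives the whole proof:
$$i^\ast(d\dot\b_t) = d(i^\ast\dot\b_t) = d(\dot\a_t) = d(d\eta_t) = 0.$$

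For even $j$ I would put $m = (2n-j)/2$ and expand by Leibniz (using $\frac{\d}{\d t}d\b_t = d\dot\b_t$ and that $d\b_t$ is a $2$-form):
$$\frac{\d}{\d t}\bigl[\b_t\wedge(d\b_t)^m\bigr] = \dot\b_t\wedge(d\b_t)^m + m\,\b_t\wedge d\dot\b_t\wedge(d\b_t)^{m-1}.$$
Applying $i^\ast$, the second summand dies because $i^\ast(d\dot\b_t)=0$, while the first becomes $d\eta_t\wedge(d\a_t)^m$. As $(d\a_t)^m$ is closed, $d\eta_t\wedge(d\a_t)^m = d\bigl(\eta_t\,(d\a_t)^m\bigr)$, so I would take $\zeta_t =_{\mathsf{def}} \eta_t\,(i^\ast d\b_t)^m$, which is a $(2n-j)$-form and verifies (\ref{eq.boundary_exact_A}). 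For odd $j$, put $m=(2n+1-j)/2$; then $\frac{\d}{\d t}(d\b_t)^m = m\,d\dot\b_t\wedge(d\b_t)^{m-1}$, whose $i^\ast$-pullback vanishes by the same identity, so (\ref{eq.boundary_exact_B}) holds with $\zeta_t = 0$.

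For the refinement, with the explicit choices above the even case gives $\zeta_t$ as $\eta_t$ times a form, so restricting to the submanifold $\d_{j+1}X(v_{\b_t})\subset\d_1X$ yields $\zeta_t|_{\d_{j+1}X(v_{\b_t})} = \bigl(\eta_t|_{\d_{j+1}X(v_{\b_t})}\bigr)\cdot(i^\ast d\b_t)^m|_{\d_{j+1}X(v_{\b_t})}$, which vanishes as soon as $\eta_t|_{\d_{j+1}X(v_{\b_t})}=0$; the odd case is automatic since $\zeta_t=0$. I do not expect a serious obstacle here: the only points needing care are keeping the degree bookkeeping straight across the two parities, and recognizing that the offending term $\b_t\wedge d\dot\b_t\wedge(d\b_t)^{m-1}$ is \emph{not} to be treated as an exact form on $X$ — it is simply killed by $i^\ast$, because the exactness $\dot\a_t=d\eta_t$ is available only after restriction to $\d_1X$ and not on all of $X$.
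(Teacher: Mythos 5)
Your proof is correct and follows essentially the same route as the paper's: both arguments rest on the commutation of restriction to $\d_1X$ with $d$ and $\frac{\d}{\d t}$, use the Leibniz rule to isolate the term where the $t$-derivative falls on $d\b_t$ (killed by $d(d\eta_t)=0$), and produce the same explicit primitive $\zeta_t = \eta_t\cdot (d\b_t)^m|_{\d_1X}$ in the even case and $\zeta_t=0$ in the odd case. The refinement $\eta_t|_{\d_{j+1}X(v_t)}=0 \Rightarrow \zeta_t|_{\d_{j+1}X(v_t)}=0$ is then immediate in both treatments.
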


\begin{proof}   Note that, for any $t$-family differential forms, their exterior differential $d =: d_t$ commutes with the partial derivative operator $\frac{\d}{\d t}$ and with the restriction of forms to $\d_1X$. 

Consider the derivative $$\frac{\d}{\d t}\Big[ (d\b_t)^k \Big] = k\; \frac{\d}{\d t}\Big(d\b_t\Big) \wedge \big[ (d\b_t)^{k-1} \big] =  k\; d \Big(\frac{\d}{\d t}(\b_t)\Big) \wedge \big[ (d\b_t)^{k-1} \big].$$
Let $\eta_t: \d_1X \to \R$ be a smooth family of functions from Definition \ref{def.volume-preserving_families}. Replacing $\frac{\d}{\d t}(\b_t)\big |_{\d_1X}$ with $d\eta_t$ in these formulas and using that $d(d\eta_t) = 0$, we see that $\frac{\d}{\d t}\Big[ (d\b_t)^k \Big]\Big|_{\d_1X} = 0$.  
Similarly, 
$$\frac{\d}{\d t}\Big[ \b_t \wedge (d\b_t)^k \Big] = \frac{\d}{\d t}\Big[ \b_t \Big] \wedge  (d\b_t)^k + \b_t \wedge \frac{\d}{\d t}\Big[ (d\b_t)^k \Big] 
 $$
Since we proved that $\frac{\d}{\d t}\Big[ (d\b_t)^k \Big]\Big|_{\d_1X} = 0$,  the restriction of this formula to $\d_1X$ collapses to $\frac{\d}{\d t}\Big[ \b_t \Big] \wedge  (d\b_t)^k\Big|_{\d_1X}$. 
Replacing $\frac{\d}{\d t}\Big[ \b_t \Big] \Big|_{\d_1X}$ with $d\eta_t$ once more, we get $$\frac{\d}{\d t}\Big[ \b_t \Big] \wedge  (d\b_t)^k\Big|_{\d_1X} = d\eta_t \wedge  (d\b_t)^k\Big|_{\d_1X} = d\Big[\eta_t \cdot  (d\b_t)^k\Big]\Big|_{\d_1X} = d\zeta_t.$$
Thus, (\ref{eq.boundary_exact_A}) is valid. 
If the functions ${\eta_t}$ are such that $\eta_t|_{\d_{j+1}X(v_t)} = 0$, then $\zeta_t|_{\d_{j+1}X(v_t)} = 0$; so the property (\ref{eq.boundary_exact_C}) holds as well. 
\end{proof}
%
%%%%%%%%%%%%%%%%%%%%%%%%%%%%%%%%%%
%
\begin{proposition} \label{prop.t-independent_kappas}
Let $\mathcal B=\{\b_t\}_{t \in [0, 1]}$ be a variationally boundary $(2n+1-j)$-exact family of contact forms %, where $j  \in [2, 2n]$ is even, 
on a compact connected smooth $(2n+1)$-manifold $X$. Assume that their Reeb vector fields $v_{\b_t}$ are %traversing and 
boundary generic for $t = 0, 1$, and for almost all $t \in (0, 1)$. \smallskip

$\bullet$ Then, for $j \equiv 0 \mod 2$, the integrals $\kappa_j(\b_t)$  from (\ref{eq.j-wrinkle_BB}) and $\kappa_{j-1}^+(\b_t)$  from (\ref{eq.j-wrinkle_A}) %from (\ref{eq.j-wrinkle_A}) 
are $t$-independent. If, in addition, the property (\ref{eq.boundary_exact_C}) holds, then $\kappa^+_j(\b_t)$ is also $t$-independent for all $j \equiv 0 \mod 2$.
\smallskip
 
$\bullet$ If $\mathcal B=\{\b_t\}_{t \in [0, 1]}$ is variationally boundary exact family, then $\{\kappa_j(\b_t)\}$ are $t$-independent for all even $j \in [2, 2n]$. If in addition,  (\ref{eq.boundary_exact_C}) holds for the function $\eta_t$, then $\{\kappa^+_j(\b_t)\}$ are $t$-independent for all $j \equiv 0 \mod 2$.
 %{\bf WORK} In particular, they are invariants under any $t$-family of contactomorphisms $\{\Phi^t_{u_0}: X \to X\}_{t \in \R}$  that integrates the vector field $w_0$, satisfying the equation $\mathcal L_{w_0}\b_0 = -dh$, where $h^{-1}(0) = \d_1X$.  Thus, we may choose $\{\b_t =: (\Phi^t_{w_0})^\ast(\b_0)\}_t$ for the role of the family $\mathcal B$.
\end{proposition}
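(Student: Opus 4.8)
The plan is to prove that each individual number $\kappa_j(\b_t)$ is constant in $t$. Since $\d_jX(v_{\b_t})=\d\big(\d^+_{j-1}X(v_{\b_t})\big)$ and $d\big(\b_t\wedge(d\b_t)^{m-1}\big)=(d\b_t)^m$ with $m:=\tfrac{2n+2-j}{2}$, Stokes already identifies $\kappa_j(\b_t)=\kappa^+_{j-1}(\b_t)$, so the two assertions of the first bullet are one and it suffices to show $\Delta:=\kappa_j(\b_1)-\kappa_j(\b_0)=0$. I would work on the fixed product $P:=\d_1X\times[0,1]$, writing $\dot\mu_t:=\d_t\mu_t$ and $D\mu_t:=d\mu_t+dt\wedge\dot\mu_t$ for the exterior derivative on $P$ of a $t$-family $\mu_t$ of forms on $\d_1X$ carrying no $dt$. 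The two relevant domains are the $2m$-dimensional stratum cobordism $\mathcal D_j:=\bigcup_t\d_jX(v_{\b_t})\times\{t\}$ and the $(2m{+}1)$-dimensional cobordism $\mathcal E:=\bigcup_t\d^+_{j-1}X(v_{\b_t})\times\{t\}$, a manifold with corners whose boundary is the two horizontal ends $\d^+_{j-1}X(v_{\b_s})\times\{s\}$, $s=0,1$, together with the lateral face $\mathcal D_j$.

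The hypothesis is used only through the scalar fact that, for every boundary-generic value of $t$, $\int_{\d_jX(v_{\b_t})}\dot\b_t\wedge(d\b_t)^{m-1}=0$. This follows from (\ref{eq.boundary_exact_A}), which reads $\dot\alpha_t|_{\d_1X}=d\zeta_t$ for $\alpha_t:=\b_t\wedge(d\b_t)^{m-1}$: a direct Leibniz computation gives the identity $m\,\dot\b_t\wedge(d\b_t)^{m-1}-\dot\alpha_t=(m-1)\,d\big(\b_t\wedge\dot\b_t\wedge(d\b_t)^{m-2}\big)$ on $\d_1X$ (the right-hand term being absent when $m=1$), so on the closed submanifold $\d_jX(v_{\b_t})$ both $\dot\alpha_t=d\zeta_t$ and the exact correction integrate to zero.

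A naive differentiation in $t$ (transport theorem) leaves the velocity term $\int_{\d_jX(v_{\b_t})}\iota_{W_t}(d\b_t)^m$, $W_t$ being the normal velocity of the moving stratum; this term is not zero pointwise, so instead I would integrate globally by applying Stokes twice. First, because $\dot\alpha_t|_{\d_1X}=d\zeta_t$, the $dt$-parts cancel and $D\big(\alpha_t+dt\wedge\zeta_t\big)=(d\b_t)^m$; integrating over $\mathcal D_j$ and using that $dt\wedge\zeta_t$ restricts to zero on the horizontal ends yields $\int_{\mathcal D_j}(d\b_t)^m=\Delta$. Second, $D\big((d\b_t)^m\big)=m\,dt\wedge d\dot\b_t\wedge(d\b_t)^{m-1}$; integrating this over $\mathcal E$ and fibering over $t$ (legitimate because the integrand carries the single factor $dt$) gives $\int_{\mathcal E}D\big((d\b_t)^m\big)=m\int_0^1\!\big(\int_{\d^+_{j-1}X(v_{\b_t})}d(\dot\b_t\wedge(d\b_t)^{m-1})\big)dt=m\int_0^1\!\big(\int_{\d_jX(v_{\b_t})}\dot\b_t\wedge(d\b_t)^{m-1}\big)dt=0$ by the scalar fact, while the boundary side equals $\Delta+\int_{\mathcal D_j}(d\b_t)^m$. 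Hence $\Delta+\int_{\mathcal D_j}(d\b_t)^m=0$; combined with $\int_{\mathcal D_j}(d\b_t)^m=\Delta$ this forces $2\Delta=0$, i.e. $\Delta=0$. The invariance of $\kappa^+_j$ is obtained by running the very same pair of Stokes arguments over $\d^+_jX(v_{\b_t})$ (whose boundary is now $\d_{j+1}X(v_{\b_t})$); the one extra term is a lateral integral of $\alpha_t+dt\wedge\zeta_t$ over $\bigcup_t\d_{j+1}X(v_{\b_t})\times\{t\}$, whose $dt\wedge\zeta_t$ part vanishes exactly because of the supplementary hypothesis (\ref{eq.boundary_exact_C}). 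The second bullet is then immediate from Lemma \ref{lem.variationally_boundary_exact_family}: a variationally boundary exact family is variationally boundary $(2n+1-j)$-exact for every even $j$ (with $\zeta_t=\eta_t\,(d\b_t)^{m-1}$, so $\eta_t|_{\d_{j+1}X}=0$ forces $\zeta_t|_{\d_{j+1}X}=0$), so the first-bullet conclusion applies simultaneously to all even $j$.

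The step I expect to be the real obstacle is turning $\mathcal D_j$ and $\mathcal E$ into honest oriented domains and getting the orientation bookkeeping exactly right. Since boundary-genericity of $v_{\b_t}$ is only assumed at $t=0,1$ and for almost every $t$, these cobordisms are smooth manifolds only off a measure-zero set of bad times; I would excise small neighborhoods of those times, apply Stokes on the good subintervals, and pass to the limit using that $t\mapsto\kappa_j(\b_t)$ is continuous. Most delicately, the whole argument collapses to the vacuous $0=0$ unless the lateral face $\mathcal D_j$ carries the same orientation in both Stokes computations—as the induced boundary orientation $\mathrm{or}_{\d_jX}\wedge dt$ of $\mathcal D_j\subset\d\mathcal E$ and as the cobordism orientation used to integrate $\alpha_t+dt\wedge\zeta_t$—so that the two identities read $\int_{\mathcal D_j}(d\b_t)^m=+\Delta$ and $\Delta+\int_{\mathcal D_j}(d\b_t)^m=0$. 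Verifying this sign, via the outward-normal convention at the corner, is the crux and must be done with care.
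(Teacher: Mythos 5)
Your overall skeleton is the same as the paper's: pass to $\d_1X\times[0,1]$, integrate over the cobordism $\mathcal D_j=\bigcup_t\d_jX(v_{\b_t})\times\{t\}$, and use the exactness hypothesis (\ref{eq.boundary_exact_A}) to kill the $dt$-component of the exterior derivative. Your first Stokes computation, giving $\int_{\mathcal D_j}(d\b_t)^m=\Delta$, is correct, and so is your ``scalar fact'' $\int_{\d_jX(v_{\b_t})}\dot\b_t\wedge(d\b_t)^{m-1}=0$. The divergence is in how the leftover term $\int_{\mathcal D_j}(d\b_t)^m$ is disposed of: the paper asserts directly that it vanishes (the form $(d\b_t)^m$ carries no $dt$ and is integrated over the chain swept out by the $(2m-1)$-dimensional strata), whereas you try to obtain a second, independent evaluation of it by applying Stokes to $D\big((d\b_t)^m\big)$ on $\mathcal E=\bigcup_t\d^+_{j-1}X(v_{\b_t})\times\{t\}$.

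This is exactly where the proof breaks, and the sign you flag as ``the crux'' comes out the wrong way --- forced, not a matter of convention. Once $\mathcal E$ is oriented so that its horizontal boundary is $+\,\d^+_{j-1}X(v_{\b_1})-\d^+_{j-1}X(v_{\b_0})$ (which you need in order to read off $+\Delta$ from the ends via the slice-wise identity $\kappa^+_{j-1}=\kappa_j$), the relation $\d\circ\d=0$ forces the lateral face of $\d\mathcal E$ to carry the orientation \emph{opposite} to the cobordism orientation of $\mathcal D_j$ used in your first computation (the one with $\d\mathcal D_j=+\,\d_jX(v_{\b_1})-\d_jX(v_{\b_0})$). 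Concretely, with $\mathrm{or}(\mathcal E)=dt\wedge\mathrm{or}(\d^+_{j-1}X)$ and $\mathrm{or}(\d^+_{j-1}X)=\nu\wedge\mathrm{or}(\d_jX)$, $\nu$ the outward normal, the induced orientation on the lateral face is $dt\wedge\nu\wedge\mathrm{or}(\d_jX)=-\,\nu\wedge\big(dt\wedge\mathrm{or}(\d_jX)\big)$, i.e.\ minus the cobordism orientation. Hence your second identity reads $0=\Delta-\int_{\mathcal D_j}(d\b_t)^m$, which is literally the first identity again, and combining the two yields $0=0$ rather than $2\Delta=0$. The two Stokes computations are not independent, so the argument never establishes $\Delta=0$; the same circularity infects your treatment of $\kappa^+_j$ under (\ref{eq.boundary_exact_C}). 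What is still missing is precisely the step the paper supplies at this point: a direct proof that $\int_{\mathcal D_j}(d\b_t)^m=0$ (equivalently, that the velocity term $\int_{\d_jX(v_{\b_t})}\iota_{W_t}(d\b_t)^m$ integrates to zero over $t$), and your detour does not provide a substitute for it.
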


\begin{proof} 
Let $\mathsf d$ be the exterior derivative operator on $X \times [0, 1]$ and $d_t$ the exterior derivative operator on a typical slice $X \times \{t\}$. Let $v_t =: v_{\b_t}$.  Consider the following sets: $$\mathcal Y_j =: \bigcup_{t \in [0, 1]}\d_jX(v_t) \times \{t\} \subset X \times [0,1], \text{ and } 
\mathcal Y_j^+ =: \bigcup_{t \in [0, 1]}\d_j^+X(v_t) \times \{t\} \subset X \times [0,1],$$
%$$\delta\mathcal Y_j^+ =: \mathcal Y_{j+1} =  \bigcup_{t \in [0, 1]}\d_{j+1}X(v_t) \times \{t\} \subset \d_1X \times [0,1].$$

Although $\mathcal Y_j $ may have singularities, located in some $t$-slices, by the hypothesis, such special $t$'s have measure zero in $[0, 1]$; so one can integrate a smooth differential form against the rectifiable chain $\mathcal Y_j$. 

We start with an even $j$. Put $s =\frac{2n-j}{2}$. Then 
$$\kappa_j(\b_1) -\kappa_j(\b_0) =:  \int_{\d_jX(v_1)} \b_1 \wedge (d_1\b_1)^s - \int_{\d_jX(v_0)} \b_0 \wedge (d_0\b_0)^s 
\stackrel{Stokes}{=}\quad  \int_{\mathcal Y_j}  \mathsf d\big(\b_t \wedge (d_t\b_t)^s\big)$$ 
$$ = \int_{\mathcal Y_j} (d_t \b_t)^{s+1} \;+ \int_{\mathcal Y_j} \frac{\d}{\d t}\Big[\b_t \wedge (d_t\b_t)^s\Big] \wedge dt$$
Each of these two integrals vanishes. Indeed, $dt$ is not present in the $(2s+2)$-form $(d_t \b_t)^{s+1}$. Thus its restriction to the $(2s+2)$-chain $\mathcal Y_j \subset \d_1X \times [0,1]$ vanishes. The second integral is also zero by the variationally boundary $(2n+1-j)$-exact hypothesis: $$\int_{\mathcal Y_j} \frac{\d}{\d t}\big(\b_t \wedge (d_t\b_t)^s\big) \wedge dt \stackrel{Fubini}{=}  
\int_{[0, 1]} \Big(\int_{\d_jX(v_{\b_t})} \frac{\d}{\d t}
\Big[\b_t \wedge (d_t\b_t)^s \Big]\Big) dt$$
$$ =  \int_{[0, 1]} \Big(\int_{\d_jX(v_{\b_t})}
d_t\zeta_t\Big) dt = 0,$$ 
since integrating the exact form $d_t\zeta_t$ over the cycle $\d_jX(v_{\b_t}) \subset \d_1X$ results in zero.
Therefore, $\kappa_j(\b_1) = \kappa_j(\b_0)$ for this particular even  $j  \in [2, 2n]$.
\smallskip

 By Lemma \ref{lem.variationally_boundary_exact_family}, the  second bullet's claim follows for all \emph{even} $j  \in [2, 2n]$. \smallskip

Using Stokes' theorem identity $\{\kappa_j(\b) = \kappa_{j-1}^+(\b)\}$ ($j \equiv 0 \mod 2$), we conclude that, for \emph{odd} $j \in [1, 2n-1]$, the quantities $\{\kappa_{j}^+(\b)\}_j$ are $\mathcal B$-deformation invariants as well. \smallskip

\smallskip

If we add property (\ref{eq.boundary_exact_C}) to property (\ref{eq.boundary_exact_A}), then a similar computation shows that $\kappa^+_j(\b_1) = \kappa^+_j(\b_0)$ for an even $j$. This time, Stokes' theorem is applied to $\mathcal Y_j^+$. Its boundary consists of tree parts: $\d_j^+X(v_0), \d_j^+X(v_1)$, and their complement $\delta\mathcal Y_j^+ = \mathcal Y_{j+1}$. Therefore,
$$\kappa^+_j(\b_1) -\kappa^+_j(\b_0) =  \int_{\mathcal Y^+_j}  \mathsf d\big(\b_t \wedge (d_t\b_t)^s\big) \pm  \int_{\delta\mathcal Y^+_j} \b_t \wedge (d_t\b_t)^s$$
$$ = \int_{\mathcal Y^+_j} (d_t \b_t)^{s+1} \;+ \int_{\mathcal Y^+_j} \frac{\d}{\d t}\Big[\b_t \wedge (d_t\b_t)^s\Big] \wedge dt\; \pm \;  \int_{\delta\mathcal Y^+_j} \b_t \wedge (d_t\b_t)^s.$$
Since $dt$ is not present in the first and third integrals, they vanish. As a result,
$$\kappa^+_j(\b_1) -\kappa^+_j(\b_0) = \int_{[0, 1]} \Big(\int_{\d^+_jX(v_{\b_t})} \frac{\d}{\d t}
\Big[\b_t \wedge (d_t\b_t)^s \Big]\Big) dt$$
$$ =  \int_{[0, 1]} \Big(\int_{\d^+_jX(v_{\b_t})}
d_t\zeta_t\Big) dt\; \stackrel{Stokes}{=}\; \int_{[0, 1]} \Big(\int_{\d_{j+1}X(v_{\b_t})}
\zeta_t\Big) dt = 0$$
by (\ref{eq.boundary_exact_C}).
\hfill
\end{proof}

Consider now the following quantities (see Definition \ref{def.average_Reeb_length}):
\begin{eqnarray}
\mathcal K_j^+(\xi) & =: \; \inf_{\{\b \leadsto \xi\; |\; \mathsf{av}_\mathcal R(\b) =1\}} 
\big\{|\kappa_j^+(\b)|\big\} &  \text{\; for an odd } j \in [1, 2n-1],\label{eq.j-wrinkle_B} \\
\mathcal K_j^+(\xi) & =: \; \inf_{\{\b \leadsto \xi\; |\; \mathsf{av}_\mathcal R(\b) =1\}  } 
\big\{|\kappa_j^+(\b)|\big\} & \text{\; for an even } j \in [2, 2n], \nonumber \\
\mathcal K_j(\xi) & =: \;\inf_{\{\b \leadsto \xi\; |\; \mathsf{av}_\mathcal R(\b) =1\}  } \big\{|\kappa_j(\b)|\big\} & 
 \text{\; for an even } j \in [2, 2n], \label{eq.j-wrinkle_C}
\end{eqnarray}
where the infimums are taken over all contact forms $\{\b\}$ that produce a given contact structure $\xi$ and such that $\mathsf{av}_\mathcal R(\b) =1$ (see Definition \ref{def.average_Reeb_length} ). 
They measure how ``wrinkled"  the Morse stratification $\{\d_j^\pm X(v_\b)\}_j$ could be with respect to a given contact structure $\xi = \ker \b$ on $X$. As a practical matter, the computation of $\{\mathcal K_j^+(\xi)\}_j, \{\mathcal K_j(\xi)\}_j$ could be quite challenging...\smallskip

 %%%%%%%%%%%%%%%%%%%%%%%%%%%

\begin{figure}[ht]
\centerline{\includegraphics[height=2.3in,width=3.5in]{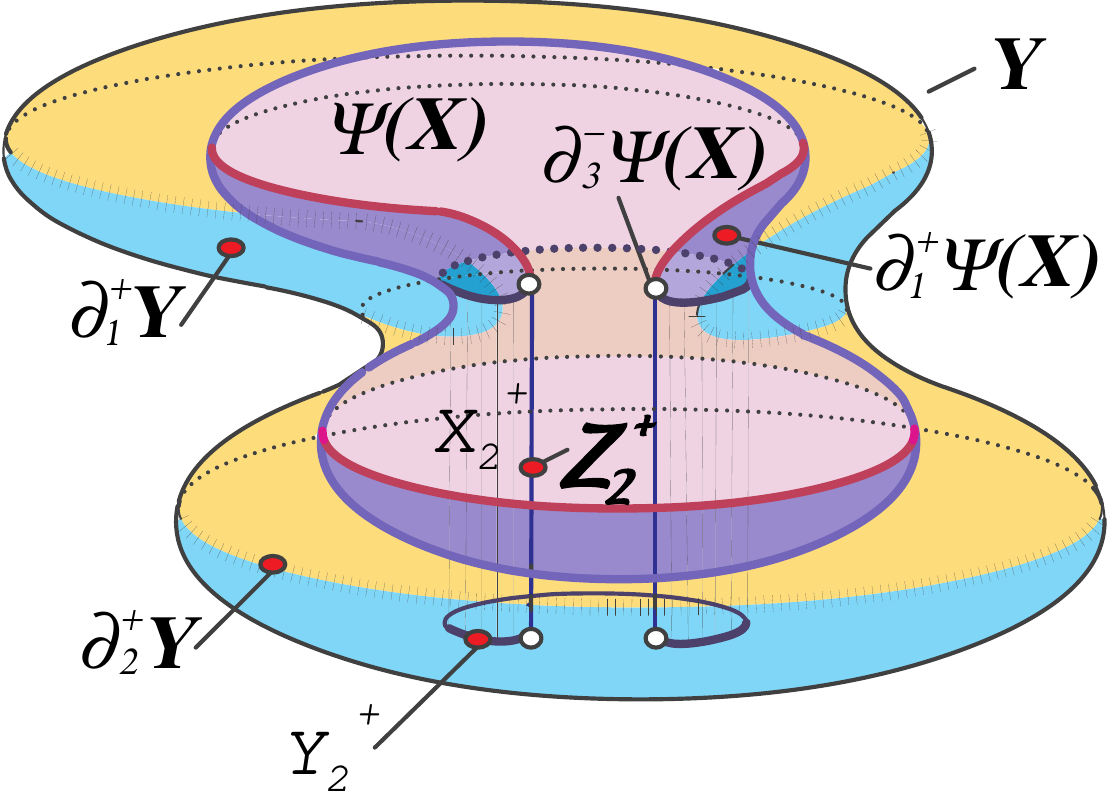}}
\bigskip
\caption{\small{By Theorem \ref{th.detecting_volumes}, $\kappa_1^+(\b_X) = \kappa_2(\b_X) = -\int_{\mathcal Y_{1}^+}d\b_Y$. The figure illustrates the geometry of $3$-folds $X$ and $Y$ and of an embedding $\Psi: X \to Y$ that leads to this formula. Although the relevant for the formula loci are: the surface $\Psi(\d_1^+X)$, the solid $\mathsf X^+_1 \subset Y$,  and the surface $\mathsf Y^+_1 \subset \d_1^+ Y$, they are not shown in the figure due to artistic limitations; instead, we have chosen to show similar loci of one dimension lower: the curve $\Psi(\d_2^+X)$, the surface $\mathsf X^+_2$, and the curve $\mathsf Y^+_2$.}}
\label{fig.3_contact_2}
\end{figure}

\smallskip

 \begin{theorem}{\bf (Holography for the $\kappa_j^+$-volumes\footnote{A priori these quantities may be negative; conjecturally, they are not.} under the contact embeddings)}\label{th.detecting_volumes} 

Let $X$ and $Y$ be compact smooth $(2n+1)$-manifolds, equipped with contact forms $\b_X, \b_Y$ and their Reeb vector fields $v_X$, $v_Y$, respectively. We assume that  $v_X$ and $v_Y$ are boundary generic and $v_Y$ is traversing.   
Let $\Psi: X \hookrightarrow \mathsf{int}(Y)$ be a smooth embedding such that $\b_X = \Psi^\ast(\b_Y)$.
For $j \in [1, 2n+1]$, consider the $(-v_Y)$-guided projections $\Pi_Y : \Psi\big(\d_{j}^+X(v_X)\big) \to \d_1^+Y(v_Y)$ and denote their images by $\mathsf Y^+_j$.\footnote{Here, as before, we assume that $\Psi(X)$ is ``semi-transparent" for the $(-v_Y)$-flow.}
\smallskip

Then the integrals $\{\kappa_j^+(\b_X)\}_j$ from (\ref{eq.j-wrinkle_A}) and $\{\kappa_j(\b_X)\}_j$ from (\ref{eq.j-wrinkle_BB}) can be recovered from the $\Pi_Y$-projections $\{\mathsf Y^+_j\}_j$  and from the knowledge of the contact form $(\b_Y)^{\d _+} =: \b_Y||_{\d_1^+Y(v_Y)}$. %\smallskip
Specifically, for an odd $ j \in [1, 2n-1]$, 
$$\kappa_j^+(\b_X) = -\int_{\mathsf Y_j^+}(d\b_Y)^{\frac{2n+1-j}{2}}$$
and, for an even $j \in [2, 2n]$,
$$\kappa_j(\b_X) = -\int_{\mathsf Y_{j-1}^+}(d\b_Y)^{\frac{2n+2-j}{2}}.$$
\end{theorem}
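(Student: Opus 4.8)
The plan is to reduce everything to the case of \emph{odd} $j$ and then run a flow-tube Stokes argument. First observe that the even-$j$ formula is a formal consequence of the odd-$j$ formula together with the Stokes identity $\kappa_j(\b)=\kappa_{j-1}^+(\b)$ recorded just before the statement: for even $j$,
$$\kappa_j(\b_X) = \int_{\d_jX(v_X)} \b_X \wedge (d\b_X)^{\frac{2n-j}{2}} = \int_{\d_{j-1}^+X(v_X)} (d\b_X)^{\frac{2n+2-j}{2}} = \kappa_{j-1}^+(\b_X),$$
and since $j-1$ is odd with $\frac{2n+1-(j-1)}{2}=\frac{2n+2-j}{2}$, applying the odd case to $j-1$ gives $\kappa_j(\b_X) = -\int_{\mathcal Y_{j-1}^+}(d\b_Y)^{\frac{2n+2-j}{2}}$, exactly the claimed even-$j$ identity. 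So it suffices to treat odd $j$.

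For odd $j$ write $k=\frac{2n+1-j}{2}$ and set $\Omega=(d\b_Y)^k$, a \emph{closed} $2k$-form on $Y$ (closed because $d\b_Y$ is). Since $\b_X=\Psi^\ast(\b_Y)$ we have $(d\b_X)^k=\Psi^\ast\Omega$, so by naturality of integration under the embedding,
$$\kappa_j^+(\b_X) = \int_{\d_j^+X(v_X)} (d\b_X)^k = \int_{\Psi(\d_j^+X(v_X))} \Omega.$$
The core step is to slide the cycle $\Psi(\d_j^+X(v_X))$ down to its shadow $\mathcal Y_j^+\subset\d_1^+Y(v_Y)$ along the $(-v_Y)$-flow and compare the two integrals of the closed form $\Omega$. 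Concretely, I would form the $(2k+1)$-dimensional flow tube $\mathcal X_j^+$ swept by the segments of $(-v_Y)$-trajectories joining each point of $\Psi(\d_j^+X(v_X))$ to the point $\Pi_Y(\cdot)\in\mathcal Y_j^+$ where that trajectory first meets $\d_1^+Y(v_Y)$; this is legitimate since $v_Y$ is traversing and $\Psi(X)$ is semi-transparent for the flow, exactly as in the proof of Theorem~\ref{th.non-squeezing}. The boundary of $\mathcal X_j^+$ splits into a top $\Psi(\d_j^+X(v_X))$, a bottom $\mathcal Y_j^+$, and side walls generated by flowing $\Psi(\d_{j+1}X(v_X))=\Psi(\d(\d_j^+X(v_X)))$.

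Applying Stokes to the closed form $\Omega$ gives
$$0 = \int_{\mathcal X_j^+} d\Omega = \int_{\d \mathcal X_j^+} \Omega = \int_{\Psi(\d_j^+X(v_X))}\Omega + \int_{\mathcal Y_j^+}\Omega + \int_{\mathsf{walls}}\Omega.$$
The wall integral vanishes: each wall is ruled by $v_Y$, and since $v_Y\,\rfloor\, d\b_Y=0$ we get $v_Y\,\rfloor\,\Omega=0$, so $\Omega$ pulls back to $0$ on any piece tangent to $v_Y$. With the boundary orientations of $\mathcal X_j^+$ dictated by its $(-v_Y)$-parametrization, the bottom $\mathcal Y_j^+$ — oriented as a subdomain of $\d_1^+Y(v_Y)$ — contributes with the \emph{same} sign as the top (this is the source of the minus sign), whence combining with the previous display yields $\kappa_j^+(\b_X)=-\int_{\mathcal Y_j^+}\Omega$. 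Finally, since $\Omega|_{\d_1^+Y}=(d(\b_Y|_{\d_1^+Y}))^k$ is determined by $\b_Y^{\d_+}=\b_Y||_{\d_1^+Y(v_Y)}$, the right-hand side depends only on the shadow $\mathcal Y_j^+$ and on $\b_Y^{\d_+}$, which is the asserted holographic reconstruction.

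The main obstacle will be making the flow tube $\mathcal X_j^+$ rigorous and fixing the orientations. One must handle the loci where $\Pi_Y$ fails to be an immersion — where $v_Y$ is tangent to $\Psi(\d_1X)$ or grazes $\d_1Y$ — by checking that they sit in sets of $2k$-dimensional measure zero, using boundary genericity of $v_X,v_Y$ together with the fact that $(d\b_Y)^k$ integrates to $0$ over trajectory-tangency loci (precisely the reasoning that discarded $\int_{\mathcal Z_\kappa(\Psi)}(d\b_2)^n=0$ in Theorem~\ref{th.non-squeezing}). The second delicate point is the sign: one must pin down the orientation convention on $\mathcal Y_j^+$ as the $\Pi_Y$-image and verify it enters $\d\mathcal X_j^+$ with the sign producing the stated minus. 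This is pure bookkeeping but the step most prone to error, and the one I would carry out most carefully, cross-checking against the case $n=1,\ j=1$ depicted in Figure~\ref{fig.3_contact_2}.
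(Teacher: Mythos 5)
Your proposal is correct and follows essentially the same route as the paper: reduce the even-$j$ case to the odd case via the Stokes identity $\kappa_j(\b_X)=\kappa_{j-1}^+(\b_X)$, then for odd $j$ apply Stokes' theorem to the closed form $(d\b_Y)^{\frac{2n+1-j}{2}}$ on the $(2n+2-j)$-dimensional flow tube $\mathcal X_j^+$ swept out by the $(-v_Y)$-trajectories, with the side walls contributing zero because $v_Y\,\rfloor\, d\b_Y=0$. The paper's proof is exactly this argument, including the same source of the minus sign from the boundary decomposition $\d\mathcal X_j^+ = \Psi(\d_j^+X(v_X))\cup\mathcal Y_j^+\cup\mathcal Z_j^+$.
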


\begin{proof} 
We consider the union $\mathsf X_j^+$ of downward $(-v_Y)$-directed  trajectories through  the points of $\Psi(\d^+_j(X)(v_X))$. Note that $\dim(\mathcal X_j^+) = 2n+2-j$. Let $$\mathsf Y_j^+ =:\, \mathsf X_j^+ \cap \d_1^+Y(v_Y)$$---the $\Pi_Y$-image of $\d_{j}^+X(v_X)$---, and let (see Fig. \ref{fig.3_contact_2}) 
$$\mathsf Z_j^+ =:\, \d(\mathsf X_j^+) \setminus \big(\Psi\big(\d_{j}^+X(v_X)\big) \cup \mathsf Y_j^+ \big).$$ 

We claim that, for an odd $j$, $\int_{\mathsf Z_j^+} (d\b_Y)^{\frac{2n+1-j}{2}} = 0$ since $v_Y \in \ker(d\b_Y)$ and $\mathsf Z_j^+$ of dimension $2n +1 -j$ consists of portions of $v_Y$-trajectories. 

In what follows we abbreviate the notation $\d^+_jX(v_X)$ as $\d^+_jX$. 
\smallskip

{\sf (a)} For an odd $ j \in [1, 2n-1]$, we get
$$\kappa_j^+(\b_X) = \int_{\d^+_jX}\big(d\b_X\big)^{\frac{2n+1-j}{2}} =$$
$$=\int_{\d^+_jX}\Psi^\ast\big(d\b_Y\big)^{\frac{2n+1-j}{2}}  = 
 \int_{\Psi\big(\d^+_j(X)\big)}\big(d\b_Y\big)^{\frac{2n+1-j}{2}}.$$

By  Stokes' theorem, applied to $\mathsf X_j^+$, and using that $d\big((d\b_Y)^{\frac{2n+1-j}{2}}\big)=0$ and that  $\int_{\mathsf Z_j^+} (d\b_Y)^{\frac{2n+1-j}{2}} = 0$, we get  
$$\kappa_j^+(\b_X) = \int_{\Psi\big(\d^+_j(X)\big)}(d\b_Y)^{\frac{2n+1-j}{2}} = -\int_{\mathsf Y_j^+}(d\b_Y)^{\frac{2n+1-j}{2}}.
$$
This validates the claim of Theorem~\ref{th.detecting_volumes} for an odd $j$.
\smallskip

\smallskip

{\sf (b)} For an even $j \in [2, 2n]$,  by  Stokes' theorem, we get
$$\kappa_j(\b_X) = \int_{\d_jX}\b_X \wedge (d\b_X)^{\frac{2n-j}{2}} = \int_{\d_{j-1}^+ X}(d\b_X)^{\frac{2n+2-j}{2}} $$
$$ =\int_{\d_{j-1}^+ X}\Psi^\ast\big((d\b_Y)^{\frac{2n+2-j}{2}}\big)  = \int_{\Psi(\d^+_{j-1}(X))} (d\b_Y)^{\frac{2n+2-j}{2}}.%= 
$$

Now we are facing the situation, described in the case {\sf (a)}. Therefore, $$\kappa_j(\b_X) = -\int_{\mathsf  Y_{j-1}^+}(d\b_Y)^{\frac{2n+2-j}{2}},$$ 
and thus is determined by $\mathsf Y_{j-1}^+$, the $\Pi_Y$-projection of the locus $\Psi\big(\d^+_{j-1}(X)(v_X)\big)$.
\end{proof}

\begin{example} 
\emph{
Let $\dim X = \dim Y = 5$. We adopt the notations and hypotheses of Theorem \ref{th.detecting_volumes}. Then $\kappa_1(\b_X) = 0 = \kappa_3(\b_X)$ and 
 $$\kappa_1^+(\b_X) = -\int_{\mathsf Y_1^+}(d\b_Y)^{2} = \kappa_2(\b_X), \quad \kappa_3^+(\b_X) = -\int_{\mathsf Y_3^+} d\b_Y =\kappa_4(\b_X).$$
 }

\emph{In contrast, we do not know whether $\kappa^+_2(\b_X)$ and $\kappa^+_4(\b_X)$ can be recovered in terms of the $\Pi_Y$-generated boundary data residing on $\d_1Y$.}
\hfill $\diamondsuit$
\end{example}

Combining Proposition \ref{prop.t-independent_kappas} with Theorem \ref{th.detecting_volumes} leads to the following claim.

\begin{corollary} %{\bf WORK !!!!} 
Let $X$ and $(Y, \b_Y)$ be as in Theorem \ref{th.detecting_volumes}.  Consider a $t$-family  $\{\Psi_t: X \hookrightarrow \mathsf{int}(Y)\}_{t \in [0,1]}$ of orientation-preserving contact embeddings such that, for an even $j \in [2, 2n]$, the $t$-family of forms $\big\{\Psi_t^\ast\big(\b_Y \wedge (d\b_Y)^{\frac{2n-j}{2}}\big)\big\}$ is variationally boundary $(2n+1-j)$-exact in the sense of Definition \ref{def.volume-preserving_families}. 
%{\bf WORK}
\smallskip

Then the quantities  $\int_{\mathsf Y_{j-1}^+(t)}(d\b_Y)^{\frac{2n+2-j}{2}} = -\kappa_j(\Psi_t^\ast\big(\b_Y)) = - \kappa^+_{j-1}(\Psi_t^\ast\big(\b_Y))$ are $t$-indepen-dent. \hfill $\diamondsuit$
\end{corollary}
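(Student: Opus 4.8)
The plan is to recognize this claim as a direct synthesis of Theorem~\ref{th.detecting_volumes} and Proposition~\ref{prop.t-independent_kappas}, applied to the $t$-family of pulled-back contact forms. First I would set $\b_t := \Psi_t^\ast(\b_Y)$. Since each $\Psi_t$ is an orientation-preserving contact embedding, $\Psi_t^\ast(\b_Y \wedge (d\b_Y)^n) = \b_t \wedge (d\b_t)^n > 0$, so each $\b_t$ is a genuine contact form on $X$; moreover, exactly as in the opening computation of the proof of Theorem~\ref{th.non-squeezing}, the relation $\b_t = \Psi_t^\ast(\b_Y)$ forces $(\Psi_t)_\ast(v_{\b_t}) = v_Y|_{\Psi_t(X)}$. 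Thus $\{\b_t\}_{t \in [0,1]}$ is a $t$-family of contact forms on $X$ whose Reeb fields are the $\Psi_t$-pullbacks of $v_Y$, and $v_{\b_t}$ is boundary generic on $X$ precisely when $v_Y$ is boundary generic relative to $\Psi_t(\d_1X)$ (which holds for $t = 0, 1$ and almost all $t$ by the standing hypotheses inherited from Theorem~\ref{th.detecting_volumes}).

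The second step is to translate the exactness hypothesis into the language of Definition~\ref{def.volume-preserving_families}. Because pullback commutes with both $d$ and $\wedge$, one has
\[
\Psi_t^\ast\big(\b_Y \wedge (d\b_Y)^{\frac{2n-j}{2}}\big) \; = \; \b_t \wedge (d\b_t)^{\frac{2n-j}{2}}.
\]
Consequently, the assumption that $\{\Psi_t^\ast(\b_Y \wedge (d\b_Y)^{\frac{2n-j}{2}})\}$ is variationally boundary $(2n+1-j)$-exact is, verbatim, condition~(\ref{eq.boundary_exact_A}) for the family $\{\b_t\}$; that is, $\{\b_t\}$ is itself variationally boundary $(2n+1-j)$-exact. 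This is exactly the hypothesis demanded by Proposition~\ref{prop.t-independent_kappas}.

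With this dictionary in place, the third step is to invoke the two cited results. Proposition~\ref{prop.t-independent_kappas}, applied to $\{\b_t\}$, yields that $\kappa_j(\b_t)$ and $\kappa_{j-1}^+(\b_t)$ are $t$-independent for the even index $j$. On the other hand, Theorem~\ref{th.detecting_volumes}, applied at each admissible value of $t$ to the embedding $\Psi_t$ with $\b_X = \b_t$, gives
\[
\kappa_j(\b_t) \; = \; -\int_{\mathcal Y_{j-1}^+(t)} (d\b_Y)^{\frac{2n+2-j}{2}},
\]
where $\mathcal Y_{j-1}^+(t)$ is the $\Pi_Y$-projection of $\Psi_t(\d_{j-1}^+X(v_{\b_t}))$. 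Combining this with the Stokes identity $\kappa_j(\b_t) = \kappa_{j-1}^+(\b_t)$ (valid for even $j$, and already used inside the proof of Proposition~\ref{prop.t-independent_kappas}) produces the asserted chain of equalities, and the $t$-independence of the integral follows at once from that of $\kappa_j(\b_t)$.

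The argument is thus essentially formal once the identifications are set up, so I do not expect a substantial obstacle. The only points genuinely requiring care are the verification that $v_{\b_t}$ inherits boundary genericity for $t = 0,1$ and almost all $t$ (so that both Proposition~\ref{prop.t-independent_kappas} and Theorem~\ref{th.detecting_volumes} legitimately apply at those parameters), and the naturality identity $\Psi_t^\ast(\b_Y \wedge (d\b_Y)^{k}) = \b_t \wedge (d\b_t)^{k}$ that turns the stated hypothesis into condition~(\ref{eq.boundary_exact_A}); the former is part of the standing assumptions and the latter is routine.
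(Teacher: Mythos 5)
Your proposal is correct and follows exactly the route the paper intends: the paper offers no separate proof, merely the remark that the corollary results from ``combining Proposition \ref{prop.t-independent_kappas} with Theorem \ref{th.detecting_volumes},'' which is precisely your synthesis via $\b_t := \Psi_t^\ast(\b_Y)$, the naturality identity turning the hypothesis into condition (\ref{eq.boundary_exact_A}), and the Stokes identity $\kappa_j = \kappa_{j-1}^+$. The two points you flag as needing care (boundary genericity of $v_{\b_t}$ for almost all $t$, and the pullback computation) are handled implicitly by the paper in the same way.
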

%\bigskip

%{\bf END WORK}

%%%%%%%%%%%%%%%%%%%%

\section{Shadows of Legendrian submanifolds of $(X, \b)$ on the screen $\d_1^+X(v_\b)$} 

We consider a  contact form $\b$ and the cooriented contact distribution $\xi_\b$ on a compact smooth manifold $X$ of dimension $2n+1$. We assume that the Reeb vector field $v_\b$ is traversing and boundary generic.  

We denote by $\mathcal I_x(d\b)$, $x \in X$, a generic {\sf isotropic space} of the symplectic form $d\b|_{\xi_\b(x)}$, that is, a vector subspace of $\xi_\b(x)$ such that $d\b(w_1, w_2) = 0$ for any  $w_1, w_2 \in \mathcal I_x(d\b)$. In fact,  $\dim(\mathcal I_x(d\b)) \leq n$. 

By definition, $L \subset X$ is {\sf sub-Legendrian manifold},  %(alternatively, {\sf isotropic submanifold}), 
if $\b|_L \equiv 0$; thus the tangent space $T_x L \subset \ker_x(\b)$ is contained in some isotropic space $\mathcal I_x(d\b) \subset \ker_x(\b)$ for all $x \in L$. If $\dim(X) =n$, $L$ is called a {\sf Legendrian manifold}.
%Note that $\b |_L$ is a closed $1$-form. Hence, one gets an element $[\b_L] \in H^1(L; \R)$, provided $\d L = \emptyset$. If the boundary $\d L \neq \emptyset$, we always assume that $\d L \subset \d_1 X$. In such a case, we getting an element $[\b_L] \in H^1(L, \d L;  \R)$.
 \smallskip
 
% \noindent {\bf Remark 8.1.} REMOVED 
% \begin{remark}
%\emph{Certain properties of a particular contact pair $(X, \b_X)$ are inherited by any contact pair $(Y, \b_Y)$ that incapsulates  $(X, \b_X)$ so that $\b_Y|_{X} = \b_X$. For example, if $(X, \b_X)$, $\dim X = 2n+1$, contains a Legendrian sphere $S^n$ or a Legendrian torus $T^n$, then so does any ambient $(Y, \b_Y)$. If $(X, \b_X)$ contains a Legendrian sphere $S^n$  that bounds an embedded ball $D^{n+1}$, then so does $(Y, \b_Y)$. In particular, if a $3$-fold $(X, \b_X)$ contains a Legendrian loop $L$ that bounds an embedded  $2$-ball $B^2 \subset X$ (such $\b_X$ is called {\sf overtwisted} \cite{E}), then any contact $3$-fold $(Y, \b_Y)$ that incapsulates  $(X, \b_X)$ is also overtwisted.} \hfill $\diamondsuit$
% \smallskip
%\end{remark}

Our next goal is to describe the images $L^\dagger$ of closed sub-Legendrian submanifolds  $L \subset \mathsf{int}(X)$, where $\dim(L) \leq n$, on the screen $\d_1^+X(v_\b)$ under the $(-v_\b)$-flow projection. The main question we address here: ``How to reconstruct $L$ from its shadow $L^\dagger$?". %and in the trajectory space $\mathcal T(v_\b)$ under the obvious quotient map $\Gamma: X \to \mathcal T(v_\b)$. 
Note that these shadows of $L$ do not change when we apply a $t$-family ($t$ being sufficiently small) of local diffeomorphisms $\phi^t$ (as in Definition \ref{def.flow_generated_diff}), generated by the Reeb field $v_\b$. Since $\b$ and $d\b$ are invariant under $\phi^t$, we conclude that $\phi^t(L)$ is also Legendrian.
\smallskip

%\noindent {\bf Remark 8.1.}
\begin{remark} 
\emph{
Our setting is reminiscent of the following well-studied classical case: the contact manifold  $X = \R \times T^*M$, where $T^*M$ is a cotangent bundle of a manifold $M$, and the obvious projection $\pi: X \to  T^*M$. The interplay between Legendrian submanifolds $L \subset X$ and their immersed Lagrangian shadows  $L^\dagger = \pi(L)$ is a subject of a deep research (see \cite{AGN}, \cite{EM} as general references). In fact, not any immersed Lagrangian submanifold $L^\dagger \subset M$ is a $\pi$-shadow of some Legendrian submanifold $L \subset X$. 
} 

\emph{
Another classical case of the interplay between Legendrian $L$ and its Lagrangian shadow $L^\dagger$ arise in  the study of prequantization bundles. Consider a smooth principle $S^1$-bundle $\pi: X \to M$, where $M$ is a manifold carrying a symplectic $2$-form $\om$, and a manifold $X$ is carrying a contact  $1$-form $\l$ such that $\pi^*(\om) = d\l$. Here the de Rham cohomology class $\frac{1}{2\pi} [\om] \in H^2(M; \R)$ of $\om$ is nontrivial and integral (belongs to the image of $H^2(M; \Z)$).
}

\emph{
Legendrian submanifolds $L$ of prequantization bundles and their Lagrangian shadows  $L^\dagger = \pi(L)$ in $M$ are again in the focus of an active  research \cite{Sab}. In particular, counting the points of self-intersection of $L^\dagger$ are of great interest. They correspond to segments of $v_\l$-trajectories (of $S^1$-trajectories) that connect pairs of points in $L$. Counting the points of self-intersection $L^\dagger \pitchfork L^\dagger$ and their behavior under contact isotopies of $L$ in all these examples is a fundamental problem \cite{AGN}, \cite{Her}.
}

\emph{ In contrast with these classical settings, in our case, the screen $\d_1^+X(v_\b)$---a vague analog of $M$--- is wrinkled, and the $(-v_\b)$-shadow $L^\dagger$ is cut in pieces due to the concavity of the boundary $\d_1X$ with respect to the Reeb flow. Moreover, the Reeb trajectories on the space of the $S^1$-bundle $\pi: X \to M$ are circles; in our case, they are closed intervals or singletons.
}
\hfill $\diamondsuit$
\end{remark}
\smallskip

By Lemma \ref{lem.positive_on_d_+}, $\mathsf{int}(\d_1^+X(v_\b))$ is an open symplectic manifold with the symplectic form $\om^\dagger =: d\b |_{\d_1^+X(v_\b)}$. Recall that the form $\om^\dagger$ degenerates along the boundary $\d_2X(v_\b) = \d(\d_1^+X(v_\b))$, since there $v_\b \in \ker(d\b) \cap T(\d_1X)$.
\smallskip

We denote by $\mathsf X_2^\updownarrow(v_\b)$ the union of $v_\b$-trajectories that intersect the locus $\d_2X(v_\b)$.

We call the union $\mathsf X_2(v_\b)$ of the downward $(-v_\b)$-trajectories through the locus $\d_2X(v_\b)$ a {\sf waterfall}. Let $\mathsf L(v_\b)$ the union of the downward $(-v_\b)$-trajectories through the points of a sub-Legendrian submanifold $L \subset X$.  Put $L^\dagger =: \mathsf L(v_\b) \cap \d_1^+X(v_\b)$ (see Fig. \ref{fig.3_contact_3}).\smallskip

\begin{definition}\label{def.L_is_in_deneral_position}

$\bullet$ We say that a sub-Legendrian manifold $L \subset \mathsf{int}(X)$ is in a {\sf  weakly general position} to the $v_\b$-flow, if $L \setminus (L \cap \mathsf X_2(v_\b))$ is open and dense in $L$.\smallskip
 
$\bullet$ We say that a sub-Legendrian manifold $L \subset \mathsf{int}(X)$ is in a {\sf  general position} to the $v_\b$-flow, if $L$ is transversal to the locus  $\mathsf X_2^\updownarrow(v_\b)$. % at the set of full measure in $L$.
\hfill $\diamondsuit$
\end{definition}

\begin{figure}[ht]
\centerline{\includegraphics[height=2.3in,width=3.7in]{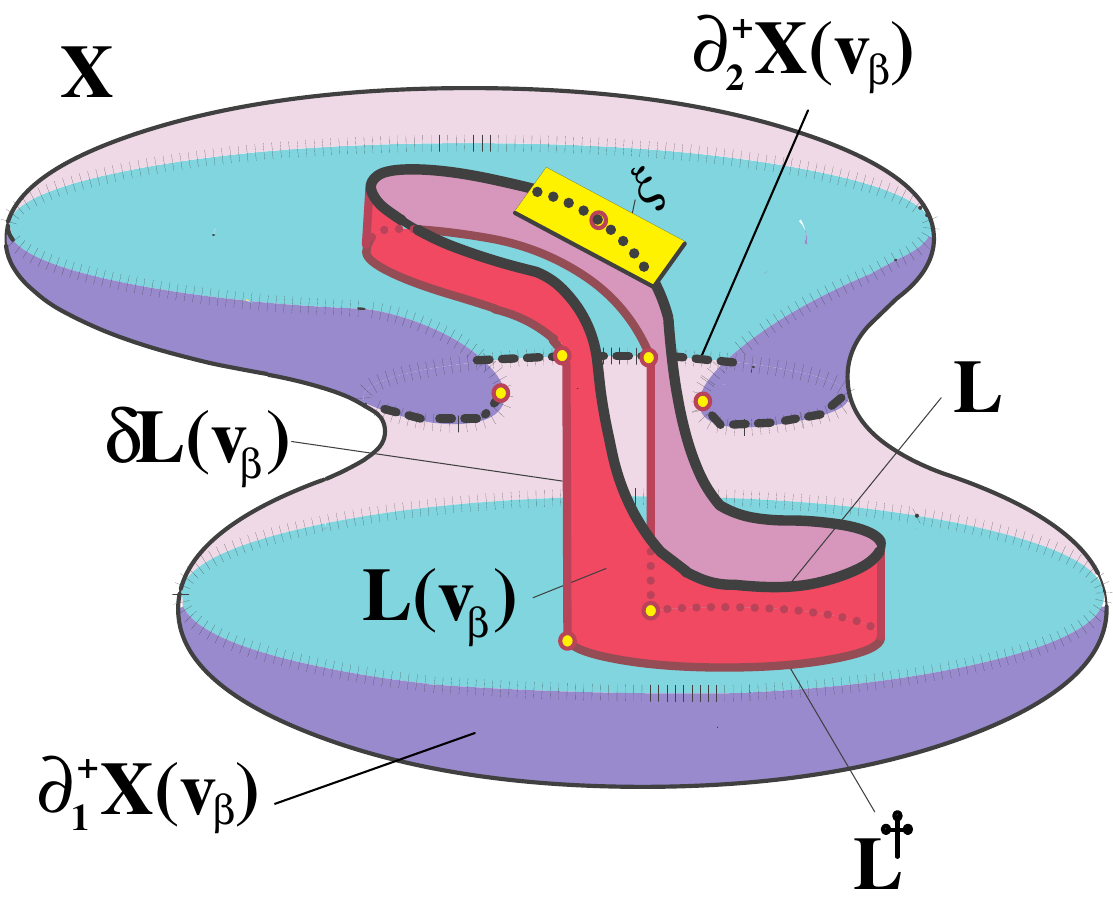}}
\bigskip
\caption{\small{The $(-v_\b)$-generated shadow $L^\dagger \subset \d_1^+X(v_\b)$ of a closed Legendrian manifold $L \subset X$, shown as a bold loop in the interior of the bulk $X$. In the figure, $L^\dagger$ consists of two arcs. The surface $\mathsf L(v_\b)$ consists of $(-v_\b)$-trajectories falling down from $L$.  Note the locus $\delta \mathsf L(v_\b) \subset X$ that consists of $(-v_\b)$-trajectories falling down from the $1$-dimensional locus $L^\dagger \cap \d_2^+X(v_\b)$.}}
\label{fig.3_contact_3}
\end{figure}
 
\begin{lemma}\label{lem.L_to_L} Let $L \subset \mathsf{int}(X)$ be a sub-Legendrian manifold.

The locus $\mathsf{int}(L^\dagger) =: L^\dagger \setminus (L^\dagger \cap \mathsf X_2(v_\b))$ is an open (possibly disconnected) {\sf sub-Lagrangian} submanifold of the open $(2n)$-manifold $$\d_1^+X(v_\b) \setminus \big(\mathsf X_2(v_\b) \cap \d_1^+X(v_\b)\big).$$ That is, for any $x \in \mathsf{int}(L^\dagger)$, the tangent space $T_x(\mathsf{int}(L^\dagger)) \subset \mathsf I_x\big(\om^\dagger\big)$, an isotropic subspace of the symplectic form $\om^\dagger = d\b |_{\mathsf{int}(\d^+_1X(v_\b))}$ at $x$. 
\end{lemma}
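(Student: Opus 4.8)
The plan is to work locally on the screen $\d_1^+X(v_\b)$ near an arbitrary point $x \in \mathsf{int}(L^\dagger)$ and to realize $L^\dagger$, close to $x$, as the image of $L$ under a $(-v_\b)$-flow projection. First I would fix $x \in \mathsf{int}(L^\dagger)$ and pick a point $p \in L$ lying on the same $v_\b$-trajectory, so that $x$ is obtained from $p$ by descending along $-v_\b$. Since $x \notin \mathcal X_2(v_\b)$, the trajectory through $x$ is transversal to $\d_1 X$ at $x$; hence a standard flow-box argument produces a smooth local projection $\pi$ from a neighborhood $Op(p)$ onto the section $\Sigma =_{\mathsf{def}} \d_1^+X(v_\b)$, sending each $q$ to the first point at which the downward trajectory from $q$ meets $\Sigma$ near $x$. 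By construction $\pi$ is a submersion with $\ker d\pi_q = \R\cdot v_\b(q)$, it restricts to the identity on $\Sigma$, and $\pi(L \cap Op(p)) = L^\dagger \cap Op(x)$.

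The key computation is the identity $\pi^\ast \om^\dagger = d\b$ on $Op(p)$. This holds because $d\b$ is \emph{basic} for the foliation by $v_\b$-trajectories: $v_\b \,\rfloor\, d\b = 0$ since $v_\b$ is the Reeb field, and $\mathcal L_{v_\b}(d\b) = d(v_\b \,\rfloor\, d\b) = 0$. A basic $2$-form is the $\pi$-pullback of a unique $2$-form on the local leaf space $\Sigma$, and restricting to $\Sigma$, where $\pi = \mathrm{id}$, identifies that form with $d\b|_\Sigma = \om^\dagger$. I expect this descent step to be the only genuinely delicate point; everything else follows formally from it.

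Next I would observe that $\pi|_L$ is an immersion near $p$. Indeed, $L$ being sub-Legendrian gives $T_pL \subset \ker_p(\b)$, whereas $\b(v_\b) = 1$ forces $v_\b(p) \notin \ker_p(\b)$; thus $T_pL \cap \R\cdot v_\b(p) = 0 = T_pL \cap \ker d\pi_p$, so $d\pi_p|_{T_pL}$ is injective. Consequently $L^\dagger = \pi(L)$ is, near $x$, an immersed submanifold of dimension $\dim L$, with $T_x(\mathsf{int}(L^\dagger)) = d\pi_p(T_pL)$. Because $\d\big(\d_1^+X(v_\b)\big) = \d_2X(v_\b) \subset \mathcal X_2(v_\b)$, excising $\mathcal X_2(v_\b)$ places $\mathsf{int}(L^\dagger)$ inside the open manifold on which $\om^\dagger$ is nondegenerate by Lemma \ref{lem.positive_on_d_+}; the weakly-general-position hypothesis guarantees that this local picture is available on a dense open subset, so $\mathsf{int}(L^\dagger)$ is a genuine (possibly disconnected, a priori immersed) submanifold.

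Finally, the isotropy is immediate: for $\eta_1, \eta_2 \in T_pL$ one has
$$\om^\dagger\big(d\pi_p\eta_1,\, d\pi_p\eta_2\big) = (\pi^\ast\om^\dagger)(\eta_1,\eta_2) = d\b(\eta_1,\eta_2) = 0,$$
the last equality being the sub-Legendrian condition $d\b|_L \equiv 0$. Hence $T_x(\mathsf{int}(L^\dagger)) = d\pi_p(T_pL)$ is an isotropic subspace of $(T_x\Sigma,\, \om^\dagger)$, i.e. it is contained in some $\mathcal I_x(\om^\dagger)$, which is exactly the assertion. The hard part is therefore conceptual rather than computational: setting up $\pi$ cleanly and proving that $d\b$ descends through it; once $\pi^\ast\om^\dagger = d\b$ is in hand, both the submanifold structure and the isotropy drop out from the elementary remark that $T_pL$ avoids $\R\cdot v_\b$ and annihilates $d\b$.
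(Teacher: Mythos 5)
Your proposal is correct and takes essentially the same route as the paper: the paper's (much terser) proof also invokes the nondegeneracy of $\om^\dagger$ from Lemma \ref{lem.positive_on_d_+} and the $v_\b$-invariance of $d\b$ to conclude that the differential of the $(-v_\b)$-projection carries isotropic subspaces of $d\b$ to isotropic subspaces of $\om^\dagger$. Your write-up merely makes explicit the flow-box identity $\pi^\ast\om^\dagger = d\b$ and the immersion point $T_pL \cap \R\cdot v_\b(p) = 0$, which the paper relegates to the remark following the lemma.
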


\begin{proof} By Lemma \ref{lem.positive_on_d_+}, the $2$-form $\om^\dagger = d\b |_{\d^+_1X(v_\b)}$ is symplectic on $\mathsf{int}(\d_1^+X(v_\b))$. Since $d\b$ is $v_\b$-invariant, for any $x \notin \d_2X(v_\b)$ and for any isotropic space $\mathcal I_x\big(d\b \big) \subset \ker_x(\b)$, its isomorphic image under the differential of the $(-v_\b)$ projection on $\mathsf{int}(\d_1^+X(v_\b))$, is an isotropic space for  the  symplectic form $\om^\dagger$.  Therefore, $\mathsf{int}(L^\dagger)$ is a Lagrangian submanifold of the open manifold $\d_1^+X(v_\b) \setminus (\mathsf X_2(v_\b) \cap \d_1^+X(v_\b))$. \hfill
\end{proof}

\begin{remark}
%\noindent {\bf Remark 8.2.}
\emph{
Since $L$ is embedded in $X$ and $v_\b$ is transversal to $\xi$,  $\mathsf{int}(L^\dagger)$ is \emph{immersed} in the open $(2n)$-manifold $\d_1^+X(v_\b) \setminus \big(\mathsf X_2(v_\b) \cap \d_1^+X(v_\b)\big)$. 
}
%Note that, for $n \geq 2$, at each point of self-intersection, the tangent spaces of the two local branches of $\mathsf{int}(L^\dagger)$ must be tangent at their self-intersections. Indeed, assume that $L$ hits a $v_\b$-trajectory $\g$ at a pair of points $x \neq y$. Let $z \in \g \cap \mathsf{int}(\d_1^+X(v_\b))$. Since the $n$-distribution $\mathcal I_x\big(d\b \big) \subset \ker(\b)$ is $v_\b$-invariant, the Reeb flow takes the isotropic subspace $\mathcal I_y(d\b)$  to the isotropic subspace $\mathcal I_x(d\b)$ and thus the flow-governed projections of both subspaces on $T_z(\d_1^+X(v_\b))$ must coincide. %{\bf ????} 
\hfill $\diamondsuit$
\end{remark}

\begin{lemma}\label{lem.any_L-projection_is_L} Let $\g$ be a Reeb $v_\b$-trajectory through a point $a_\star \in X$, which hits $\d_1^+X(v_\b)$ transversally at a point $b_\star$. Let $L^\dagger$ be a given Lagrangian submanifold (with respect to the symplectic form $\om^\dagger =: d\b|_{\mathsf{int}(\d^+_1X(v_\b))}$)  of $\d_1^+X(v_\b)$, which contains $b_\star$. Then, in the vicinity of $a_\star$, there exists a unique germ of Legendrian submanilold $L \subset X$ that contains $a_\star$ and whose $(-v_\b)$-governed projection is the germ of $L^\dagger$ at $b_\star$.   
\end{lemma}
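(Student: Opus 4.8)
The plan is to realize $L$ as the image of $L^\dagger$ under a \emph{variable-time} Reeb flow, where the time function is dictated precisely by the requirement that $L$ be tangent to $\ker\b$. First I would set up flow coordinates. Working in the extension $(\hat X, \hat v_\b)$, where $\hat v_\b$ is nonvanishing, I note that since $\g$ hits $\d_1^+X(v_\b)$ transversally at $b_\star$, the boundary is a local transversal section of the $\hat v_\b$-flow $\Phi_t$ near $b_\star$; moreover $b_\star \in \mathsf{int}(\d_1^+X(v_\b))$, as the intersection is transversal. Hence there is a time $t_0 \geq 0$ with $a_\star = \Phi_{t_0}(b_\star)$. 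For a germ of $L^\dagger$ at $b_\star$ and a function $\tau: L^\dagger \to \R$ to be determined, I set $F(b) = \Phi_{\tau(b)}(b)$ and $L := F(L^\dagger)$; the normalization $\tau(b_\star) = t_0$ guarantees $a_\star \in L$.

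The key computation is $F^\ast\b$. Differentiating $F(b) = \Phi_{\tau(b)}(b)$ along $w \in T_bL^\dagger$ gives $F_\ast w = d\tau_b(w)\, v_\b(F(b)) + (\Phi_{\tau(b)})_\ast w$, and since $\b(v_\b) = 1$ and $\b$ is $v_\b$-invariant (so $\Phi_t^\ast\b = \b$, as $\mathcal L_{v_\b}\b = 0$), I obtain
\[
F^\ast \b = d\tau + \b|_{L^\dagger},
\]
where $\b|_{L^\dagger}$ is the restriction of $\b$ to $L^\dagger \subset \d_1^+X(v_\b)$. To force $T_xL \subset \ker\b$ I demand $F^\ast\b = 0$, i.e. $d\tau = -\b|_{L^\dagger}$. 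This is exactly where the Lagrangian hypothesis enters: $d(\b|_{L^\dagger}) = (d\b)|_{L^\dagger} = \om^\dagger|_{L^\dagger} = 0$ because $L^\dagger$ is Lagrangian for $\om^\dagger = d\b|_{\mathsf{int}(\d_1^+X(v_\b))}$, so $\b|_{L^\dagger}$ is closed and the Poincar\'e lemma provides a germ $\tau$ with $d\tau = -\b|_{L^\dagger}$, unique once $\tau(b_\star) = t_0$ is fixed.

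With this $\tau$ the map $F$ is an embedding near $b_\star$, since distinct points of $L^\dagger$ lie on distinct Reeb trajectories and $F$ moves each only along its own trajectory. By construction $\b|_L = F^\ast\b = 0$, so $T_xL \subset \xi_x = \ker\b$. Then $d\b|_L = d(\b|_L) = 0$, so $T_xL$ is isotropic for $d\b$; as $\dim L = \dim L^\dagger = n$ and $T_xL \subset \xi_x$, the space $T_xL$ is Lagrangian in the symplectic vector space $(\xi_x, d\b|_{\xi_x})$, i.e. $L$ is Legendrian. Finally, $\Phi_{-\tau(b)}$ carries $F(b)$ back to $b \in L^\dagger$, and by transversality of $\g$ to $\d_1^+X(v_\b)$ at $b_\star$ this is the first downward intersection of the trajectory with the boundary; hence the $(-v_\b)$-projection of $L$ is exactly the germ of $L^\dagger$ at $b_\star$.

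For uniqueness, any Legendrian germ $L'$ at $a_\star$ projecting to $L^\dagger$ is transversal to $v_\b$ (since $v_\b \notin \xi = \ker\b \supseteq TL'$), so near $a_\star$ it is again of the form $\{\Phi_{\tau'(b)}(b) : b \in L^\dagger\}$ with $\tau'(b_\star) = t_0$, where $\tau'$ is smooth by the implicit function theorem. The Legendrian condition once more forces $d\tau' = -\b|_{L^\dagger}$, and uniqueness of the primitive with the prescribed value $\tau'(b_\star) = t_0$ yields $\tau' = \tau$, whence $L' = L$. The step I expect to require the most care—and which justifies the "germ near $a_\star$" qualifier—is the bookkeeping that makes $F$ an embedding and singles out $b$ as the \emph{first} downward hit of its trajectory on the boundary; both are consequences of transversality of $\g$ to $\d_1^+X(v_\b)$ at $b_\star$, but this is the only place where the geometry of the boundary stratification genuinely intervenes.
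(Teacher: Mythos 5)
Your argument is correct and is essentially the paper's proof in coordinate-free form: the paper works in Darboux coordinates, writes the Legendrian condition as the pair of equations $\b(\tau_i)=0$ and $d\b(\tau_i,\tau_j)=0$, and observes that the second group (the Lagrangian condition on the shadow) is exactly the integrability condition allowing one to solve the first group for the height function $z(\vec t)$, uniquely once its value at one point is fixed — which is precisely your equation $d\tau=-\b|_{L^\dagger}$ with the normalization $\tau(b_\star)=t_0$. The only cosmetic difference is that the paper first proves the statement for a flat transversal hyperplane $H=\{z=\mathrm{const}\}$ and then transports it to $\d_1^+X(v_\b)$ using transversality and the $v_\b$-invariance of $d\b$, whereas you work with the boundary as a local transversal section directly.
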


\begin{proof} In the vicinity of $a_\star$, by the Darboux  theorem,  we can pick  coordinates $(z, x_1, y_1,\; \ldots , \;\hfill\break  x_n, y_n)$ such that:  they vanish at $a_\star$, and $\b = dz + \sum_{i=1}^n x_i \,  dy_i$, and $d\b = \sum_{i=1}^n dx_i \wedge dy_i$.  Then $v_\b = \d_z$. In these coordinates, the germ at $a_\star$ of the Legendrian submanilold $L$ in question is given, in its parametric form, by some  smooth functions $z(\vec t ),\, x_1(\vec t ),\, y_1(\vec t ),  \, \ldots \, ,\hfill\break  x_n(\vec t ),\, y_n(\vec t ),$ where $\vec t = (t_1, \ldots , t_n) \in \R^n$. We assume that the tangent to $L$ vectors $$\tau_i = \Big(\frac{\d z}{\d t_i}, \frac{\d x_1}{\d t_i}, \frac{\d y_1}{\d t_i}, \, \ldots , \frac{\d x_n}{\d t_i}, \frac{\d y_n}{\d t_i}\Big),$$  $1 \leq i \leq n$, are linearly independent in the vicinity of $a_\star$. Put $$\tilde \tau_i = \Big(\frac{\d x_1}{\d t_i}, \frac{\d y_1}{\d t_i}, \, \ldots , \frac{\d x_n}{\d t_i}, \frac{\d y_n}{\d t_i}\Big).$$
We first will prove the assertion of the lemma for the case when $\d_1^+X(v_\b)$ is equal to the hyperplane  $H = \{z = const\}$, where the point $\g \cap H$ resides in the interval of $\g$, bounded by $a_\star$ and $b_\star$. 

The property of $L$ being Legendrian can be expressed by the set of equations: 
\begin{eqnarray}\label{eq.AAX}
\{\b(\tau_i) = 0\}_{1 \leq i \leq n} = \Big\{-\frac{\d z}{\d t_i} = \sum_{q=1}^n x_q \frac{\d y_q}{\d t_i}\Big\}_{1 \leq i \leq n},
\end{eqnarray}
If these equations are solvable, then the system 
\begin{eqnarray}\label{eq.BBX}
\{d\b(\tau_i, \tau_j) = d\b(\tilde\tau_i, \tilde\tau_j) = 0\}_{1 \leq i < j \leq n}, 
\end{eqnarray}
must be solvable as well. 

%Assume for a moment that the second group of equations (\ref{eq.BBX}) is satisfied by some  functions  $x_1(\vec t ),\, y_1(\vec t ), \, \ldots \, , x_n(\vec t ),\, y_n(\vec t ).$ Then 
We can solve the equations in (\ref{eq.AAX}) for $z(\vec t )$, provided that the RHS of equation (\ref{eq.AAX}) is a gradient vector field whose components are $\Big\{\sum_{q=1}^n x_q \frac{\d y_q}{\d t_i}\Big\}_{1 \leq i \leq n}$, or that
\begin{eqnarray}\label{eq.CCX}
\Big\{\frac{\d\tilde\tau_i}{\d t_j} = \frac{\d\tilde\tau_j}{\d t_i}\Big\}_{1 \leq i < j \leq n}.
\end{eqnarray}
Moreover, the solution $z(\vec t )$ of (\ref{eq.AAX}) with the property $z(\vec 0) = a_\star$ will be unique. %????

By a direct computation, the  second group of equations (\ref{eq.BBX}) coincides 
with the group of equations in  (\ref{eq.CCX}). In turn, the equations in (\ref{eq.CCX}) claim that $\{d\b(\tilde\tau_i, \tilde\tau_j) = 0\}_{1 \leq i \neq j \leq n}$. In other words, they claim that the vectors $\{\tilde\tau_i\}_{1 \leq i \leq n}$ belong to some $n$-space $\mathcal Iso(d\b|_H) \subset H$,  so that the $n$-manifold $L_H^\dagger$, given by the functions $\{x_1(\vec t ),\, y_1(\vec t ),  \, \ldots \, , x_n(\vec t ),\, y_n(\vec t )\}$ is Lagrangian in the hypersurfuce $H$. The same argument shows that the germ of any Lagrangian submanifold $L_H^\dagger \subset H$, passing through the point $\vec 0 \in H$, is the shadow, under the $z$-projection, of the germ of unique Legendrian $L$, containing the origin of $\R^{2n+1}$.

It remains to prove that any germ of a Lagrangian $L_H^\dagger \subset H$ (with respect to $d\b|_H$) at the point $\g \cap H$, with the help of $(-v_\b)$-flow, projects onto a germ of a Lagrangian $L^\dagger \subset \d_1^+X(v_\b)$ (with respect to $d\b|_{\d_1^+X(v_\b) \setminus \d_2X(v_\b)}$) at the point $b_\star = \g \cap \d_1^+X(v_\b)$. The latter observation follows from the transversality of $\g$ to $\d_1^+X(v_\b)$ at $b_\star$ and the $v_\b$-invariance of the form $d\b$. 
\end{proof}

For a given Lagrangian submanifold $L^\dagger \subset \mathsf{int}(\d_1^+X(v_\b))$, consider the (discontinuous) function 
$\Psi^\b_{L^\dagger}: L^\dagger \to \R_+$, defined by $$\Psi^\b_{L^\dagger}(x) = \int_{[x,\; C_{v_\b}(x)]} \b,$$
where $C_{v_\b}$ is the causality map (see (\ref{def.causal_map})). Note that, for a connected component $L^\ddagger$ of $L^\dagger \subset  \d_1^+X(v_\b) \setminus (\mathsf X_2(v_\b) \cap \d_1^+X(v_\b))$, the function $\Psi^\b_{L^\ddagger} =: \Psi^\b_{L^\dagger}\big |_{L^\ddagger}$ is continuous and smooth.

Put $\mathcal D_{L^\ddagger}^\b =: \sup_{x \in L^\ddagger} \big\{ \Psi^\b_{L^\ddagger}(x)\big\}.$
 One may think of this number as the ``Reeb elevation" of the component $L^\ddagger$. Evidently, $\mathsf{diam}_\mathcal R(\b) \geq \mathcal D^\b_{L^\ddagger}$.
\smallskip

Let us revisit the given Legendrian manifold $L \subset X$ whose $(-v_\b)$-projection is $L^\dagger$. For any $x \in \mathsf{int}(L^\dagger)$, take the first point $y(x) \in \g_x$ that belongs to $L$ and consider the function $$\psi^\b_L(x) =: \int_{[x, y(x)]} \b,$$ where $[x, y(x)] \subset \g_x$ is the segment of the trajectory $\g_x$, bounded by $x$ and $y(x)$. \smallskip

Evidently, $\psi^\b_L(x) \leq \Psi^\b_{L^\ddagger}(x)$ %$= \Psi^\b_{L^\ddagger}(b) + (G^\bullet)^\d_{L^\ddagger}(x, b)$$ 
for any $x \in L^\ddagger \subset L^\dagger$.

\begin{corollary}\label{cor.LIFTING_L_dagger} Let $\b$ be a contact form whose Reeb vector  field $v_\b$ is traversing and boundary generic. Let $L \subset \mathsf{int}(X)$ be a %connected {\bf closed} 
Legendrian submanifold in a weakly general position to the waterfall $\mathsf X_2(v_\b)$. %, the set of $(-v_\b)$-trajectories through the locus $\d_2X(v_\b)$. 
Let $L^\dagger$ be its $(-v_\b)$-guided projection of $L$ on $\d_1^+X(v_\b)$.

Consider the connected components $\{L^\ddagger_\kappa\}_\kappa$ of the Lagrangian manifold $\mathsf{int}(L^\dagger)$, immersed in $\d_1^+X(v_\b) \setminus (\mathsf X_2(v_\b) \cap \d_1^+X(v_\b)).$ In each component $L^\ddagger_\kappa$, we pick a base point $b_\kappa$, distinct from the points of self-intersection of $L^\dagger$. \smallskip 

Then the knowledge of the shadow $L^\dagger$, together with the set the base points $\{b_\kappa\}_\kappa$ and the set of values $\{\psi^\b_L(b_\kappa)\}_\kappa$, allows for a  reconstruction of the Legendrian embedding $L \subset  \mathsf{int}(X)$.
\end{corollary}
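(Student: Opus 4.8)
We have a Legendrian submanifold $L \subset X$ and its $(-v_\b)$-shadow $L^\dagger$ on $\d_1^+X(v_\b)$. The Lemma `lem.any_L-projection_is_L` already tells us that a germ of $L$ is uniquely recovered from the germ of $L^\dagger$ together with one point of $L$ lying over it. The issue is a global one: $L^\dagger$ breaks into connected pieces $L^\ddagger_\kappa$ by the waterfall $\mathcal X_2(v_\b)$, and on each piece we have a whole $1$-parameter family of candidate Legendrian lifts (obtained by sliding along the Reeb trajectories). We need to pin down the correct lift on each piece, and the claim is that fixing the value $\psi^\b_L(b_\kappa)$ at one base point $b_\kappa$ per component does exactly this. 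Let me think through how the lift is determined.

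**The reconstruction recipe.** First I would fix a component $L^\ddagger_\kappa$ with its base point $b_\kappa$ and value $c_\kappa := \psi^\b_L(b_\kappa)$. For any other point $x \in L^\ddagger_\kappa$, I want to locate the point $y(x) = L \cap \g_x$ lying over $x$, i.e. determine the Reeb-length $\psi^\b_L(x) = \int_{[x,\,y(x)]}\b$ at which the lift sits. The key is that along the Lagrangian $L^\ddagger_\kappa$ the function $\psi^\b_L$ is \emph{determined up to a constant} by $L^\dagger$ alone. Concretely, pick a smooth path $\sigma$ in $L^\ddagger_\kappa$ from $b_\kappa$ to $x$; the lift $\tilde\sigma$ of $\sigma$ to $L$ is a path in the Legendrian, so $\int_{\tilde\sigma}\b = 0$ because $\b|_L$ restricted to a Legendrian is a closed form and — more to the point — $L$ being Legendrian means the lift is isotropic for $\b$ in the appropriate sense; what I actually want is the flow-geometric identity
\begin{equation}
\psi^\b_L(x) - \psi^\b_L(b_\kappa) \;=\; \int_{\sigma} \b\big|_{L^\dagger} - \int_{\tilde\sigma}\b\big|_{L},
\end{equation}
and since $\b|_L$ vanishes on the closed (Legendrian) lift while $\b|_{L^\dagger}$ is computable purely from the shadow, the difference $\psi^\b_L(x)-c_\kappa$ is recovered by integrating $\b|_{L^\dagger}$ along $\sigma$. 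Here the $v_\b$-invariance of $\b$ (so that transporting along Reeb trajectories preserves $\b$) is what makes $\int_{\sigma}\b|_{L^\dagger}$ equal to the change in Reeb-depth. Thus $\psi^\b_L$ is reconstructed on all of $L^\ddagger_\kappa$, hence so is the point $y(x)$, hence so is $L$ over $\mathsf{int}(L^\dagger)$.

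**Closing up and independence of choices.** Having reconstructed $L$ over each $\mathsf{int}(L^\ddagger_\kappa)$, I would extend by continuity across the waterfall locus $L^\dagger \cap \mathcal X_2(v_\b)$: because $L$ is assumed to be in weakly general position, the reconstructed piece over each $L^\ddagger_\kappa$ has a well-defined limit as $x$ approaches the degeneracy locus, and these limits agree with the embedded manifold $L$ by density of $L \setminus (L \cap \mathcal X_2(v_\b))$ in $L$. I must check that the answer does not depend on the path $\sigma$ chosen inside $L^\ddagger_\kappa$; this follows from the closedness of $\b|_{L^\dagger}$ on the Lagrangian $\mathsf{int}(L^\dagger)$ (a Lagrangian inside an exact-at-boundary symplectic setting, where $\om^\dagger = d\b|_{\d_1^+X(v_\b)}$ vanishes on $T L^\dagger$, so $d(\b|_{L^\dagger}) = \om^\dagger|_{L^\dagger} = 0$), so the integral depends only on the homotopy class, and I would note that moving $b_\kappa$ within $L^\ddagger_\kappa$ merely shifts $c_\kappa$ by a computable amount, confirming one base point per component suffices.

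**The main obstacle.** The delicate step is the path-independence together with the across-waterfall matching: I expect the genuine difficulty to be verifying that the independently reconstructed sheets over the various $L^\ddagger_\kappa$ glue into a single embedded (not merely immersed) Legendrian $L$, and that the lift chosen by $\psi^\b_L(b_\kappa)$ is forced to stay on the correct branch of $\g_x \cap L$ — i.e. that "the first point $y(x) \in \g_x$ belonging to $L$" used in the definition of $\psi^\b_L$ is tracked consistently as $x$ crosses between components. This is where the weakly-general-position hypothesis and Lemma `lem.any_L-projection_is_L` (uniqueness of the local Legendrian germ over a Lagrangian germ) must be combined carefully, rather than any hard estimate.
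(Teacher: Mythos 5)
Your argument is correct in substance but reaches the conclusion by a genuinely different mechanism than the paper's. The paper's proof is an ``analytic continuation'': it uses the value $\psi^\b_L(b_\kappa)$ only to pin down the single point $a_\kappa\in L\cap\g_{b_\kappa}$, and then repeatedly invokes the local uniqueness statement of Lemma \ref{lem.any_L-projection_is_L} (a Lagrangian germ of $L^\dagger$ together with one point of the fiber determines the Legendrian germ of $L$) to propagate the lift from neighborhood to neighborhood, finally using weak general position to recover $L$ as the closure of a dense union of such local lifts. You instead reconstruct the whole height function $\psi^\b_L$ on each component by an explicit path-integral formula and then read off $y(x)$ directly, using $\b(v_\b)=1$ to convert Reeb time into the integral of $\b$ along the trajectory. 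Your route buys an explicit, quantitative reconstruction recipe that does not need germ-propagation; the paper's route buys a cleaner treatment of the branch problem, because the continuation is carried out over the components of $L^\dagger\setminus(L^\dagger_\times\cup\mathcal L^\dagger_2)$ --- the self-intersection locus $L^\dagger_\times$ is removed along with the waterfall --- and this is exactly what disposes of the ``which branch of $\g_x\cap L$'' worry you raise at the end. Two repairs to your version: the key identity should be justified by applying Stokes' theorem to the surface swept by the trajectory segments $[\sigma(t),\,y(\sigma(t))]$, on which $d\b$ vanishes because $v_\b\in\ker(d\b)$ (the same computation as in Lemma \ref{lem.zero_volumes}); with the orientations fixed by the definition of $\psi^\b_L$ it reads $\psi^\b_L(x)-\psi^\b_L(b_\kappa)=\int_{\tilde\sigma}\b-\int_{\sigma}\b\,|_{L^\dagger}=-\int_{\sigma}\b\,|_{L^\dagger}$, i.e.\ with the sign opposite to the one you wrote. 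Moreover, the continuous lift $\tilde\sigma$ must be taken on a fixed sheet of $\pi_{v_\b}$, which is legitimate only after deleting $L^\dagger_\times$ from the component; on such a component the unique preimage coincides with ``the first point of $\g_x$ in $L$,'' so your formula then agrees with the definition of $\psi^\b_L$.
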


%\begin{corollary} {\bf WORK} Let $\b$ be a contact form whose Reeb vector field is traversing and boundary generic. Let $L^\ddagger$ be a connected component of a Lagrangian submanifold $L^\dagger$ immerced in $\d_1^+X(v_\b) \setminus (\mathcal X_2(v_\b) \cap \d_1^+X(v_\b)).$ Pick a point $x \in L^\ddagger$ and a point $y \in \g_x $ such that $\int_{[x, y]}\b = c$, a constant. {\bf ????} Then there exists a unique (open) Legendrian submanifold $L^\bullet \subset X \setminus \mathcal X_2(v_\b)$ such that $y \in L^\bullet$ and the Reeb flow projection of $L^\bullet$ is $L^\ddagger$.
%\end{corollary}

\begin{proof} The reconstruction of $L$ resembles the process of analytic continuation. 

Let $\pi_{v_\b}: L \to L^\dagger$ be the $(-v_\b)$-guided projection.
Let $L^\dagger_\times \subset L^\dagger$ denotes the subset  consisting of points that have multiple $\pi_{v_\b}$-preimages in $L$, and let $\mathcal L^\dagger_2 =: L^\dagger \cap \mathsf X_2(v_\b)$. By the weakly general position hypothesis, the set $L^\dagger \setminus (L^\dagger_\times \cup \mathcal L^\dagger_2)$ is open and dense in $L^\dagger$.

Knowing the point $b_\kappa \in L^\ddagger_\kappa$ makes it possible, with the help of the value $\psi^\b_L(b_\kappa)$, to locate a unique point $a_\kappa \in L \cap \g_{b_\kappa}$.  There is a unique germ $L_\kappa$ of $L$ in the vicinity of $a_\kappa$ that projects onto the germ of $L^\ddagger_\kappa$ at $b_\kappa$. 

Let $U^\dagger_\kappa$ be an open neighborhood of $b_\kappa$ in $L^\dagger$ over which the reconstruction of $L$ is achieved. 
Take a point $x \in L^\dagger$ that belongs to the boundary of $U^\dagger_\kappa$ and such that $x \notin L^\dagger_\times \cup \mathcal  L^\dagger_2$.  Then by Lemma \ref{lem.any_L-projection_is_L}, there exists a unique lift $L^\bullet$ to $X$ of $L^\dagger$ over some open neighborhood $U_x \subset L^\dagger$ of $x$, such that the point $y= \g_x \cap L$  belongs to $L^\bullet$.  Since, over $U_x \cap U^\dagger_\kappa$, both $L^\bullet$ and $L$ are the lifts of $L^\dagger$ that share the point $y$, again  by Lemma \ref{lem.any_L-projection_is_L}, we conclude that $L^\bullet = L$ over $U_x$. As a result, the reconstruction of $L$ over $U_x \cup U^\dagger_\kappa$ is available. Therefore, the maximal open set in $L^\dagger$, over which the lift of $L^\dagger$, subject to the constraint ``$a_\kappa$ belongs to the lift", is possible is a connected component $L^\ddagger$ of the open set $L^\dagger \setminus (L^\dagger_\times \cup \mathcal L^\dagger_2)$.  Since $L$ is in weakly general position with respect to the waterfall, the union of such  lifts, taken over all $\kappa$'s, is a dense  subset $L \setminus (L \cap \pi_{v_\b}^{-1}(\mathcal L^\dagger_2 \cup L^\dagger_\times))$ of $L$.  %(see Fig. \ref{fig.3_contact_3}). 
Therefore, taking the closure of this union, produces the given $L \subset X$.
\end{proof}

\begin{remark}
%\noindent {\bf Remark 8.3.} 
\emph{Note that the very existence of Legendrian submanifold $L \subset X$ insured that the assembly instructions in the proof of Corollary \ref{cor.LIFTING_L_dagger} were consistent. }

\emph{ Even when the Reeb vector field $v_\b$ is convex (so the waterfall $\mathsf X_2(v_\b) = \d_2^-X(v_\b)$) and the $v_\b$-trajectories $\g_x$ depend continuously on $x \in \d_1^+X(v_\b)$, we do not know whether any closed Lagrangian submanifold $L^\dagger \subset \d_1^+X(v_\b)$ is the $\pi_{v_\b}$-shadow of a Legendrian manifold $L \subset X$. Corollary \ref{cor.LIFTING_L_dagger} claims only that, if such $L$ exists, then choosing a point $a_\star \in L$ makes it unique.  It seems that to tackle this question we need to be able to estimate from above the function $\Psi^\b_{L^\dagger}(x)$ (or at least the Reeb elevation $\mathcal D_{L^\dagger}^\b$ of $L^\dagger$) in terms of the form $\b$ in the vicinity of $L^\dagger \subset \d_1^+X(v_\b)$.
}
\hfill $\diamondsuit$
\end{remark}

\begin{proposition}\label{prop.holography_for_Legendrians} Let $\b$ be a  contact form on $X$, and $v_\b$ its traversing boundary generic Reeb vector field. Assume that Property $\mathsf A$ from Definition \ref{def.property_A} is valid. Let $\phi: (L, \d L) \hookrightarrow (X, \d_1 X)$ be a Legendrian embedding of a compact manifold $L$.  \smallskip

Then the following boundary-confined data: 
\begin{itemize}
\item the causality map $C_{v_\b}: \d_1^+X(v_\b) \to \d_1^-X(v_\b)$,
\item  the restriction $(f^\bullet)^\d: \d_1 X \to \R$ of the Lyapunov function $f^\bullet: X \to \R$ such that $df^\bullet(v_\b)=1$, %{\bf WORK}
\item the restriction $\b^\d = \b |_{\d_1X}$ of the contact form $\b$, %viewed as a section of the bundle  $T^\ast X|_{\d_1X} \to \d_1X$,
\item the Lagrangian immersion  $\mathsf{int}(L^\dagger) \subset \d_1^+X(v_\b) \setminus (\mathsf X_2(v_\b) \cap \d_1^+X(v_\b))$ with respect to the symplectic form $d\b|_{\mathsf{int}(\d_1^+X(v_\b))}$,
\item the set of base points $\{b_\kappa\}_\kappa$ and the elevation values $\{\psi^\b_L(b_\kappa)\}_\kappa$ as in Corollary \ref{cor.LIFTING_L_dagger}, % the function $\psi_L: \mathsf{int}(L^\dagger) \to \R$,
\end{itemize}
allow for a reconstruction of the Legendrian embedding $\phi: L \hookrightarrow X$, up to a diffeomorphism of $X$ that is the identity on $\d_1 X$.
\end{proposition}

\begin{proof} %{\bf WORK} 
By Corollary \ref{cor.main_reconstruction}, the hypotheses in the first three bullets allow for a reconstruction of $(X, v_\b, f^\bullet, \b)$ up to a diffeomorphism that is the identity on $\d_1X$. With this quadruple in place, 
 the base points $\{b_\kappa \in \mathsf{int}(L^\dagger)\}_\kappa$ and the values $\{\psi^\b_L(b_\kappa)\}_\kappa$, determine uniquely a point $y(b_\kappa) \in L$ on each trajectory $\g_{b_\kappa}$. By Corollary \ref{cor.LIFTING_L_dagger}, the assertion follows.
\end{proof}

%In the next lemma, we are adopting the notations from Definition \ref{def.L_is_in_deneral_position}.

\begin{lemma}\label{lem.zero_volumes} Let $X$ be a compact connected smooth $2n+1$-dimensional manifold, $\b$ a contact form, and $v_\b$ its  Reeb vector field, which is traversing and boundary generic. 

Let $L \subset X$ be a sub-Legendrian (isotropic) $(2k+1)$-dimensional closed submanifold of $X$, where $2k+1 \leq n$. We assume that $L$ is in a %weakly 
general position with respect to the waterfall $\mathsf X_2(v_\b)$ (see Definition \ref{def.L_is_in_deneral_position}).  We denote by $\mathsf L(v_\b)$ the union of all $(-v_\b)$-trajectories originating in $L$, and by $\delta \mathsf L(v_\b)$  the union of $(-v_\b)$-trajectories originating in $L$ and hitting the locus $\d_2^+X(v_\b)$. The $L$-shadow $L^\dagger$ is defined as $\mathsf L(v_\b) \cap \d_1^+X(v_\b)$ (see Fig. \ref{fig.3_contact_3}). \smallskip

%the set $L^\ddagger =: \delta L(v_\b) \cap \d_1^+X(v_\b)$ has zero measure in $L^\dagger  =: L(v_\b) \cap \d_1^+X(v_\b)$. \smallskip

Then, for any $k > 0$, we get  %the symplectic (signed) volume of the cobordism $L(v)$ between $L$ and $L^\dagger$ equals to the (signed) contact volume of the sub-Lagrangian $(2k+1)$-dimensional submanifold of the symplectic manifold $\mathsf{int}(\d_1^+X(v))$:
$\int_{\mathsf L(v_\b)}  (d\b)^{k+1}  =0$ and $\int_{L^\dagger} \b \wedge (d\b)^{k} =0.$

In contrast, for $k=0$, we still get 
 $\int_{\mathsf L(v_\b)}  d\b  = 0, \; \text{ but }  
 \;  \int_{L^\dagger} \b = - \int_{\delta \mathsf L(v_\b)} \b\; \leq \; 0.$
 %{\bf WORK} Let $\a$ be a $1$-form. Then, for any $k >0$, $$\int_{L(v)} d\a \wedge  (d\b)^{k}  = \int_{\delta L(v)} \a \wedge (d\b)^{k} + \int_{L^\dagger} \a \wedge (d\b)^{k} -  \int_{L} \a \wedge (d\b)^{k} .$$
%{\bf END WORK}
\end{lemma}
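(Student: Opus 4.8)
The plan is to apply Stokes' theorem to the exact form $(d\b)^{k+1}=d\big(\b\wedge(d\b)^k\big)$ over the $(2k+2)$-dimensional set $L(v_\b)$ swept out by the downward Reeb trajectories issuing from $L$, and to exploit throughout the single algebraic fact $v_\b\,\rfloor\, d\b=0$ together with the isotropy $d\b|_L\equiv 0$ of a sub-Legandrian $L$. First I would record a pointwise vanishing: since the interior product is an antiderivation and $v_\b\,\rfloor\, d\b=0$, one gets $v_\b\,\rfloor\,(d\b)^{m}=m\,(v_\b\,\rfloor\, d\b)\wedge(d\b)^{m-1}=0$ for every $m\ge 1$. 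As $L(v_\b)$ is ruled by $v_\b$-trajectories, its tangent space contains $v_\b$ at each smooth point; and because $(d\b)^{k+1}$ has top degree $2k+2=\dim L(v_\b)$, its pull-back is evaluated on a frame containing $v_\b$ and therefore vanishes identically. This already yields $\int_{L(v_\b)}(d\b)^{k+1}=0$ for every $k\ge 0$, the first equality in both cases. Weakly general position of $L$ relative to $\mathcal X_2(v_\b)$ makes the singular locus of $L(v_\b)$ of measure zero, so $L(v_\b)$ is a rectifiable $(2k+2)$-chain and Stokes applies:
\[
0=\int_{L(v_\b)}(d\b)^{k+1}=\int_{\d\,(L(v_\b))}\b\wedge(d\b)^{k}.
\]

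Next I would decompose $\d\,(L(v_\b))$ into three faces: the \emph{initial} locus $L$ (where the trajectories issue), the \emph{terminal} locus $L^\dagger=L(v_\b)\cap\d_1^+X(v_\b)$ on the entrance screen, and the \emph{lateral} fold curtain $\delta L(v_\b)$, made of the trajectory segments running from $L$ into the tangency stratum $\d_2^+X(v_\b)$. With the orientations induced from $L(v_\b)$ (oriented by $L$ followed by the downward flow direction) this reads $\int_{L}\b\wedge(d\b)^{k}-\int_{L^\dagger}\b\wedge(d\b)^{k}-\int_{\delta L(v_\b)}\b\wedge(d\b)^{k}=0$. On $L$ the integrand vanishes: for $k>0$ because $d\b|_L\equiv0$ forces $\b\wedge(d\b)^k|_L=0$, and for $k=0$ because sub-Legandrian means $T_xL\subset\ker\b$, so $\b|_L=0$. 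For the fold curtain, which is again ruled by $v_\b$-trajectories over a base $B\subset L$ with $\dim B=2k$, I would restrict $\b\wedge(d\b)^k$ to a frame $\{v_\b,w_1,\dots,w_{2k}\}$, the $w_i$ being flow-transports of $\tau_i\in T_bB\subset T_bL$; using $v_\b\,\rfloor\,\big(\b\wedge(d\b)^k\big)=(v_\b\,\rfloor\,\b)(d\b)^k-\b\wedge(v_\b\,\rfloor\,(d\b)^k)=(d\b)^k$ and the $v_\b$-invariance $\mathcal L_{v_\b}(d\b)=0$, this restriction equals $(d\b)^k(\tau_1,\dots,\tau_{2k})$, which is $0$ for $k\ge 1$ by isotropy of $L$. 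Hence for every $k>0$ both the $L$- and the $\delta L(v_\b)$-terms drop out, and Stokes leaves exactly $\int_{L^\dagger}\b\wedge(d\b)^{k}=0$.

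For $k=0$ the $L$-term still vanishes and the identity collapses to $\int_{L^\dagger}\b=-\int_{\delta L(v_\b)}\b$. Here $\delta L(v_\b)$ is a $1$-chain of genuine trajectory segments, and since $\b(v_\b)=1$ the integral of $\b$ over each segment is its positive $\b$-length; with the induced boundary orientation these segments are traversed so that $\int_{\delta L(v_\b)}\b\ge 0$, giving $\int_{L^\dagger}\b=-\int_{\delta L(v_\b)}\b\le 0$, as asserted.

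The delicate points, and the only ones beyond bookkeeping, are the rigorous description of $\d\,(L(v_\b))$ as a rectifiable chain with exactly the three faces above, and fixing the induced orientations so that the $k=0$ inequality comes out nonpositive. This is where weakly general position and the local normal forms of boundary generic traversing flows from \cite{K1}, \cite{K3} must be invoked, to guarantee that the base $B=L\cap\mathcal X_2(v_\b)$ has the expected dimension $2k$, that $\delta L(v_\b)$ is genuinely $(2k+1)$-dimensional, and that the exceptional (non-manifold) set of $L(v_\b)$ is $(d\b)^{k+1}$-negligible. I expect this orientation-and-measure-zero analysis of the fold curtain to be the real work; the form-level computations are immediate consequences of $v_\b\in\ker(d\b)$ and $d\b|_L\equiv0$.
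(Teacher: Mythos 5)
Your proposal is correct and follows essentially the same route as the paper's proof: the pointwise vanishing of $(d\b)^{k+1}$ on the ruled set $L(v_\b)$ and of $\b\wedge(d\b)^k$ on $L$ and on the fold curtain $\delta L(v_\b)$ (all from $v_\b\in\ker(d\b)$, $\b(v_\b)=1$, and the isotropy of $T_xL$), followed by Stokes' theorem applied to the three-face boundary $\d(L(v_\b))=\delta L(v_\b)\cup L^\dagger\cup(-L)$, with the $k=0$ sign coming from $\b(v_\b)=1$ on the trajectory segments of $\delta L(v_\b)$. Your explicit flagging of the rectifiability and orientation bookkeeping is a point the paper treats only implicitly via the weakly-general-position hypothesis, but it is the same argument.
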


\begin{proof} Since $T_x L  \subset \ker\b$ %is an isotropic subspace for $d\b$, 
we see that  $\b \wedge (d\b)^{k}|_L = 0$. Thus, $\int_L \b \wedge (d\b)^{k} = 0$ for all $k \in [0,  \lfloor (n-1)/2\rfloor]$.\smallskip

Note that, at a generic point $x \in \mathsf  L(v_\b)$, the $(2k+2)$-dimensional  tangent space $T_x(\mathsf L(v_\b))$  is spanned by $v_\b(x)$ and the image of the tangent space $T_y L$ under $(-v_\b)$-flow $\phi^t$ that takes $y \in L$ to $x$. Then,  $(d\b)^{k+1} |_{T_x(\mathsf L(v_\b))} = 0$, since $v_\b \in \ker(d\b)$. 
 Therefore, $\int_{\mathsf L(v_\b)} (d\b)^{k+1} = 0$.

  Let $\delta \mathsf L(v_\b)$ be the set of downward trajectories originating in $L$ and intersecting $\d_2^+X(v_\b)$  (see Fig. \ref{fig.3_contact_3}).
 At a generic point $x \in \delta \mathsf L(v_\b)$, the tangent space $T_x(\delta \mathsf L(v_\b))$  is spanned by $v_\b(x)$ and a $2k$-dimensional isotropic subspace $\mathcal K_x$ of  $T_x(\delta \mathsf L(v_\b)) \cap \xi_\b(x)$. Indeed, since $\mathsf X_2^\updownarrow(v_\b))$ intersects $L$ transversally, we may regard $\mathcal K_x$ as the image of the isotropic tangent space $T_y(L \cap \mathsf X_2^\updownarrow(v_\b))$ under the $(-v_\b)$-flow that takes $y$ to $x$ and preserves $d\b$.

% {\bf WORK} We regard $\mathcal K_x$ as the limit of spaces $\{\phi^{t_n}_\ast(T_{y_n} (L \cap \delta \mathsf L(v_\b)))\}_{n \to \infty}$, all of which are isotropic (since $T_{y_n}L$ are isotropic and $\b$ is $\phi^t$-invariant), for a sequence $\{x_n = \phi^{t_n}(y_n)\}$, $y_n \in L \cap \delta \mathsf L(v_\b)$, converging to $x$. 
 
 % (the image of a tangent space $T_y L$, under $(-v_\b)$-flow that takes $y$ to $x$). 
 Thus, for $k>0$, $\b \wedge (d\b)^{k} |_{T_x(\delta \mathsf  L(v_\b))} = 0$ since $v_\b \in \ker(d\b)$ and $d\b|_{\mathcal K_x} = 0$. %the  images of the tangent spaces $T_y L$, under $(-v_\b)$-flow, are contained in the isotropic subspaces $\mathcal I_x$. 
 Therefore, $\int_{\delta \mathsf L(v_\b)} \b \wedge (d\b)^{k} = 0$.
 
  The boundary $\d(\mathsf L(v_\b)) = \delta \mathsf L(v_\b) \cup L^\dagger \cup -L$. In fact, $L^\dagger$, away from the set $\delta \mathsf L(v_\b) \cap \d_1^+X(v_\b)$ of measure zero, is an immersed submanifold of $\d_1^+X(v_\b)$. 
\smallskip 
 
Recall that $d(\b \wedge (d\b)^{k}) = (d\b)^{k+1}$. Therefore, for $k > 0$, by the Stokes' theorem and using that $\int_{L} \b \wedge (d\b)^{k} = 0$,  $\int_{\delta \mathsf L(v_\b)} \b \wedge (d\b)^{k} = 0$, and $\int_{\mathsf L(v_\b)} (d\b)^{k+1} = 0$, we conclude that $\int_{L^\dagger} \b \wedge (d\b)^{k} = 0$ as well.
\smallskip

In the special case $k =0$, using a similar analysis and Stokes' theorem, we get $$\int_{L^\dagger} \b = - \int_{\delta \mathsf L(v_\b)} \b\; \leq \; 0 $$
since $\b(v_\b) = 1$.
\end{proof}

\begin{corollary}{\bf (A holographic property of Legendrian links in relation to a  concave boundary)}\label{cor.sub_Legendrian_links} 

Let $\b$ be a contact form whose Reeb vector field $v_\b$ is traversing  and boundary generic on $X$. Let  a Legendrian link\footnote{a collection of embedded disjoint loops tangent to $\xi_\b$} $L \hookrightarrow \mathsf{int}(X)$ be in a  general position with respect to the waterfall $\mathsf X_2(v_\b)$. Let $L^\dagger \subset \d_1^+X(v_\b)$ be the image of $L$ under the $(-v_\b)$-flow guided projection $X \to \d_1^+X(v_\b)$. \smallskip

Then $\int_{L^\dagger} \b < 0$, if and only if, there exists a down-flow trajectory $\g_\star$ that originates at $L$ and hits the concavity locus $\d_2^+X(v_\b)$. 

On the other hand, if $\int_{L^\dagger} \b \geq 0$, then $\int_{L^\dagger} \b = 0$, and there is no such trajectory $\g_\star$. 
In such a case, the  $L^\dagger$ is a collection of closed immersed curves.
\end{corollary}

\begin{proof} 

Since $\b(v_\b) =1$, $\int_{\delta \mathsf L(v_\b)} \b > 0$ when $\delta \mathsf L(v_\b)$ has a positive $1$-dimensional measure. This is equivalent to the existence of downward trajectory that connects $L$ to $\d_2^+X(v_\b)$. 

By Lemma \ref{lem.zero_volumes}, in the case $k=0$, we get that $\int_{\delta \mathsf L(v_\b)} \b > 0$ if and only if $\int_{L^\dagger} \b < 0$.
\end{proof}

{\it Acknowledgments:} I am grateful  to Yakov Eliashberg and John Etnyre for very enlightening  conversations. My pleasant duty is to thank Gunther Uhlmann for the much needed encouragement.  

Finally, I would like to thank the referee for exceptionally useful recommendations, which improved  a great deal the accuracy and overall quality of my presentation.


\begin{thebibliography}{30}

\bibitem [AGN]{AGN} Arnol’d, V.I., Givental’, A.B., Novikov, S.P.,  {\it Symplectic Geometry.} In: Arnold, V.I., Novikov, S.P. (eds) Dynamical Systems IV. Encyclopaedia of Mathematical Sciences, vol 4. Springer, Berlin, Heidelberg (2001). %https://doi.org/10.1007/978-3-662-06791-8_1

\bibitem [Bo]{Bo} Boardman, J. M., {\it Singularities of Differential Maps}, Publ. Math. I.H.E.S. 33 (1967), 21-57.

\bibitem[BEM]{BEM} Borman, M., Eliashberg, Y., Murphy, E., {\it Existence and classification of overtwisted contact structures in all dimensions}, arXiv:1404.6157v2 [math SG] 12 Oct 2014.

\bibitem [Chi]{Chi} Chiu, S., {\it Non-squeezing property of contact balls}, Duke Math. J. 166 (2017), 605-655.

\bibitem[ChMa]{ChMa}  Chen, L., Mann, K., {\it Structure Theorems for Actions of Homeomorphism Groups}, Duke Math. J. 172 (5), 915- 962, 1 April 2023. 
 (arXiv:1902.05117v3 [math.GT] 9 Aug 2020).

\bibitem [D]{D} Dieudonn\'{e}, J., {\it Treatise on Analysis}, vol. 3, (1972) New York-London: Academic Press, Inc., MR 0350769.

%\bibitem [E]{E} Eliashberg, Y., {\it Classification of overtwisted contact structures on 3-manifolds}, Invent. Math. 98 (1989), no. 3, 623-637.

%\bibitem [EG]{EG} Eliashberg, Y. M., Gromov, M. L., {\it Construction of a Smooth Map with a Given Jacobian}, Functional Analysis and its Applications, v.7, issue 1, (1973) 33-40 (Russ).

\bibitem [EM]{EM} Eliashberg, Y., Mishachev, N., {\it Introduction to the $h$-principle}, Graduate  Studies in Mathematics, vol. 48, American Mathematical Society (2002). 

\bibitem [EKP]{EKP} Eliashberg, Y.,   Kim S. S.,   Polterovich, L., {\it Geometry of contact transformations and domains: orderability versus squeezing}, Geom. Topol. 10 (2006), 1635–1747.

\bibitem [FSZ]{FSZ} Fraser, M., Sandon, S., Zhang, B., {Contact non-squeezing at large scale via generating functions}, arXiv:2310.11993v2 [math.SG] 15 April 2025.

\bibitem [Gray]{Gray} Gray, J.W., {\it Some global properties of contact structures}, Ann. of Math. (2) 69 (1959), 421-450.

\bibitem [Gr1]{Gr1} Gromov, M., {\it Partial Differential Relations}, Springer-Verlag, 1986.

\bibitem[Gr2]{Gr2} Gromov, M. L.,  {\it Pseudo holomorphic curves in symplectic manifolds}. Inventiones Mathematicae, 82 (2): 307-347 (1985).

\bibitem [GG]{GG} Golubitsky, M., Guillemin, V., {\it Stable Mappings and Their Singularities}, Graduate Texts in Mathematics 14, Springer-Verlag, New York Heidelberg Berlin, 1973.

%\bibitem[Ge]{Ge} Geiges, H., {\it Contact Geometry}, Handbook of Differential Geometry, vol. 2, (F.J.E. Dillen and L.C.A. Verstraelen, eds.) %????

\bibitem[Her]{Her}  Her, H.L., {\it Symplectic Energy and Lagrangian Intersection Under Legendrian Deformations}, arXiv: 0705.2884v1 {math. SG} 20 May 2007.

\bibitem[Huse]{Huse} Husemoller, D., {\it Fiber Bundles}, Springer-Verlag, New York-Heidelberg-Berlin, ISBN 0-387-90103-5.


\bibitem[K1]{K1} Katz, G., {\it  Traversally Generic \& Versal Flows: Semi-algebraic Models of Tangency to the Boundary}, Asian J. of Math., vol. 21, No. 1 (2017), 127-168 (arXiv:1407.1345v1 [mathGT] 4 July, 2014)).

%\bibitem[K3]{K3} Katz, G., {\it The Stratified Spaces of Real Polynomials \& Trajectory Spaces of Traversing Flows}, JP J. of Geometry and Topology, v. 19, No. 2 (2016), 95-160 (arXiv:1407.2984v3 [mathGT] 6 Aug 2014).

\bibitem [K2]{K2} Katz, G., {\it Causal Holography in Application to the Inverse Scattering Problem}, Inverse Problems and Imaging J., June 2019, 13(3), 597-633 (arXiv: 1703.08874v1 [Math.GT], 27 Mar 2017).

% \bibitem [K6]{K6} Katz, G., {\it Harmonic Forms and Near-minimal Singular Foliations}, Comment. Math. Helv. 77 (2002) 39-77.

\bibitem [K3]{K3} Katz, G., {\it Morse Theory of Gradient Flows, Concavity, and Complexity on Manifolds with Boundary}, World Scientific, (2019), ISBN 978-981-4368-75-9.

\bibitem [K4]{K4} Katz, G.,  {\it Causal Holography of Traversing Flows},  Journal of Dynamics and Differential Equations (2020)  https://doi.org/10.1007/s10884-020-09910-y 

\bibitem [K5]{K5} Katz,  G.,  {\it Holography of geodesic flows, harmonizing metrics, and billiards' dynamics}, Inverse Problems and Imaging, doi:10.3934 / ipi.2023009  
(arXiv:2003.10501v2 [math.DS] 1 Apr 2020).

%\bibitem [K6]{K6} Katz,  G.,  {\it Recovering Contact Forms from Boundary Data}, %%arXiv:2309.14604 v2 [math.SG] 11 Nov 2023, \& 
%arXiv:2309.14604v5 [math.SG] 29 Mar 2025.

\bibitem [M]{M} Morse, M. {\it Singular points of vector fields under  general boundary conditions}, Amer. J. Math. 51 (1929), 165-178. 

\bibitem [Mo]{Mo}  Moser, J., {\it On the volume elements on a manifold}, Trans. Amer. Math. Soc. 120 (1965), 286-294.

\bibitem [S]{S} Santalo, L., {\it Integral Geometry and Geometric Probability}, Cambridge Mathematical Library, Cambridge University Press (second edition 2004).

\bibitem [Sab]{Sab} Sablot, J. M., {\it Invariants of Legendrian Knots in Circle Bundles}, arXiv:math/0208214v1 [math.SG] 27 Aug 2002.

%\bibitem [Sa]{Sa} Sasaki, S., {\it On the differential geometry of tangent bundle of Riemannian manifolds}, Tohoku Math. J.,10 (1958), 338-354.

\bibitem[Cerf]{Cerf} Cerf, J.,  {\it Sur les diffeomorphismes de la sphere de dimension trois} ($\Gamma _4=0$), Lecture Notes in Mathematics, vol. 53, (1968), Berlin-New York: Springer-Verlag.


\bibitem [Thom]{Thom} Thom, R., {\it Quelques propri\'{e}t\'{e}s globales des vari\'{e}t\'{e}s differentiables}, Comm. Math. Helv., 28 (1954), 17-86. 


\end{thebibliography}
\end{document}